\providecommand{\U}[1]{\protect\rule{.1in}{.1in}}
\newtheorem{theorem}{Theorem}
\newtheorem{algorithm}[theorem]{Algorithm}
\newtheorem{definition}[theorem]{Definition}
\newtheorem{example}[theorem]{Example}
\newtheorem{lemma}[theorem]{Lemma}
\newtheorem{proposition}[theorem]{Proposition}
\newtheorem{remark}[theorem]{Remark}
\newenvironment{proof}[1][Proof]{\noindent\textbf{#1.} }{\ \rule{0.5em}{0.5em}}
\newcommand{\LD}{\mathbf{L}_\kappa^D}
\newcommand{\DomLD}{\mathscr{D}(\LD)}
\def\bL{\mathbf{L}_{\kappa}}
\begin{document}
	
	\author{Abigail G. M\'arquez-Hern\'andez$^{1}$, V\'{\i}ctor A. Vicente-Ben\'{\i}{}tez$^{2}$\\{\small $^1$Divisi\'on de Ciencias e Ingenier\'ias, Universidad de Guanajuato, Campus Le\'on}\\{\small Lomas del Bosque 103, Lomas del Campestre, Le\'on, Guanajuato C.P. 37150 M\'{e}xico}\\{\small $^{2}$Instituto de Matemáticas de la U.N.A.M. Campus Juriquilla}\\{\small Boulevard Juriquilla 3001, Juriquilla, Querétaro C.P. 076230 M\'{e}xico }\\
		{\small ag.marquezhernandez@ugto.mx, va.vicentebenitez@im.unam.mx, } }
	\title{Neumann series of Bessel functions for the solutions of the Sturm-Liouville equation in impedance form and related boundary value problems}
	\date{}
	\maketitle

\begin{abstract}
		We present a Neumann series of spherical Bessel functions representation for solutions of the Sturm--Liouville equation in impedance form
\[
(\kappa(x)u')' + \lambda \kappa(x)u = 0,\quad 0 < x < L,
\]
in the case where $\kappa \in W^{1,2}(0,L)$ and has no zeros on the interval of interest.
The $x$-dependent coefficients of this representation can be constructed explicitly by means of a simple recursive integration procedure.
Moreover, we derive bounds for the truncation error, which are uniform whenever the spectral parameter $\rho=\sqrt{\lambda}$ satisfies a condition of the form $|\operatorname{Im}\rho|\leq C$.
Based on these representations, we develop a numerical method for solving spectral problems that enables the computation of eigenvalues with non-deteriorating accuracy.
	\end{abstract}
	
	\textbf{Keywords: }Spectral theory of ordinary differential equations; Sturm-Liouville problems; Neumann series of Bessel functions; Transmutation operators; Numerical analysis.
	\newline 
	
	\textbf{MSC Classification: }34A25; 34B09; 34B24; 34L16; 41A30; 47G20.

   \section{Introduction}

   The aim of this work is to construct analytical series representations for the solutions of the Sturm-Liouville equation in impedance form (SLEIF) 
   \begin{equation}\label{eq:SLEIF intro}
    -\frac{d}{dx}\left(\kappa(x)\frac{du(x)}{dx}\right)=\lambda \kappa(x) u(x),\qquad 0<x<L,\; \lambda \in \mathbb{C},
   \end{equation}
 where $\kappa \in W^{1,2}(0,L)$  is a nonvanishing function on the interval $[0,L]$, referred to as the {\it conductivity function} of the equation.
 
 Equations of the form \eqref{eq:SLEIF intro} arises in several problems of mathematical-physics, in particular, when $\kappa(x)>0$, in acoustic theory, Eq. \eqref{eq:SLEIF intro} is known as the {\it Webster horn equation} \cite{webster}, which describes the propagation of acoustic waves through a horn generated by rotating the \textit{impedance function} $a(x)=\sqrt{\kappa(x)}$. Likewise, direct and inverse eigenvalue problems of this type occur in geophysics \cite{carroll,carroll2,santosa}, wave propagation \cite{carroll1,Wu}, and classical vibration theory \cite{gladwell,vatulyan}. An interesting application appears in the determination of the shape of the human vocal tract from acoustic measurements \cite{aktosun}.

 When the conductivity function belongs to the class $W^{2,2}(0,L)$, Eq. \eqref{eq:SLEIF intro} can be transformed into a Schr\"odinger equation of the form 
 \[
    -v''(x)+\frac{a''(x)}{a(x)}v(x)=\lambda v(x),\qquad 0<x<L, \; \lambda \in \mathbb{C},
 \]
 via the Liouville transformation $v(x)=a(x)u(x)$ \cite{Everitt}. If $\kappa\in W^{1,2}(0,L)$, the resulting Schr\"odinger equation must be understood in the sense of an equation with a distributional potential \cite{mineshrodinger}.

In this work, we focus on solving direct Sturm-Liouville problems, mainly with Dirichlet and Neumann type boundary conditions. It is well known that for regular Sturm–Liouville problems, computing the eigenvalues reduces to finding the zeros of a characteristic equation, which is given by an entire function of the spectral parameter $\rho=\sqrt{\lambda}$.

Standard numerical methods for solving problems of this type include shooting and Pr\"ufer methods (see, e.g., \cite{sirca}) as well as finite difference schemes \cite{Gao}. More recent approaches for the numerical solution of direct Sturm--Liouville-type problems include, for example, the \textsc{MATSLICE} package based on \cite{matslice}, sinc--Galerkin expansions \cite{almalki}, and the method proposed in \cite{vlahakis}, where the Sturm--Liouville equation is reformulated in terms of Schwarzian derivatives. In these approaches, sufficient regularity of the coefficients is required in order to transform the problem into an equivalent Schr\"odinger equation via a Liouville transformation. However, such purely numerical methods are not easily extendable to broader classes of problems, such as those involving complex-valued coefficients or the computation of additional spectral data, including the Weyl function.

A useful tool for the effective computation of spectral data (eigenvalues and eigenfunctions) is the analytical representation of the solutions of Eq. \eqref{eq:SLEIF intro} in the form of a series of analytical functions in the parameter $\rho$. One such representation is obtained by expanding the solutions as power series in the spectral parameter $\rho$, a technique commonly known as the {\it SPPS method} \cite{sppscampos, kravchenkospp1, spps}. The coefficients in $x$ are called formal powers and can be computed through a practical recursive integration procedure \cite{spps}. In this setting, the characteristic equation is approximated by a polynomial obtained by truncating the series. This approach has been widely applied in recent years to the analysis of Sturm–Liouville-type problems \cite{sppscampos, spps}, Dirac-type systems \cite{sppsnelson}, equation pencils \cite{sppsulises}, higher-order differential equations \cite{sppsordenn}, and other related settings. However, a drawback of this type of series expansion is that the approximation in the spectral parameter is uniform only on compact sets containing the expansion point $\rho_0$. Consequently, the computation of large eigenvalues typically requires several spectral shifts of the series. This motivates the search for representations whose convergence is uniform on the entire real axis. Such representations may be constructed via integral transmutation operators. For the Schrödinger equation with a regular potential, it is known that the solutions admit an integral representation with an $L^2$ kernel \cite{marchenko}. From an expansion of this kernel in an appropriate series of orthogonal polynomials, one may derive series representations for the solutions, such as the {\it Neumann series of spherical Bessel functions (NSBF)} \cite{NSBF1}. These series have the advantageous property that the approximation in the spectral parameter is uniform on a strip $|\operatorname{Im}\rho|\leq C$. Moreover, in the case of the Schrödinger equation, they enable the computation of higher-order eigenvalues with uniform error bounds \cite{NSBF1}. For equations of the form \eqref{eq:SLEIF intro}, this expansion is typically obtained under the assumption $\kappa\in W^{2,2}(0,L)$, via the Liouville transform \cite{NSBF2}, and the existence of such series for $\kappa\in C^1[0,L]$ is also known \cite{mineimpedance1}.

The first goal of this work is to prove, in the case where $\kappa \in W^{1,2}(0,L)$, the existence of an NSBF series representation for the solution $e_{\kappa}(\rho,x)$ \eqref{eq:SLEIF intro} satisfying $e_{\kappa}^{(k)}(\rho,0)=(i\rho)^k$, $k=0,1$, in the form
\[
  e_{\kappa}(\rho,x)=e^{i\rho x}+\sum_{n=0}^{\infty}i^n\alpha_n(x)j_n(\rho x),
\]
where $j_n(z)$ denotes the spherical Bessel functions, and develop a simple integration procedure for the explicit construction of the coefficients $\{\alpha_n(x)\}_{n=0}^{\infty}$. The existence of the NSBF representation is derived through the use of an integro-differential transmutation operator associated with equation \eqref{eq:SLEIF intro}. The solution $e_{\kappa}(\rho,x)$ admits an integral representation involving a kernel $K_{\kappa}(x,t)$ of class $W^{1,2}$ defined on the triangle $0<x<L,\; -x<t<x$ \cite[Sec. 10]{mineshrodinger}. By expanding the kernel $K_{\kappa}(x,t)$ as a Fourier series in terms of Legendre polynomials, we deduce the desired NSBF representation. Due to the regularity of the transmutation kernel, we show that the remainder of the NSBF series is of order $O\left(\frac{1}{N}\right)$, uniformly for $x\in (0,L]$ and $|\operatorname{Im}\rho|\leq C$. Moreover, if the conductivity function is of class $C^p$, then the transmutation kernel is also of class $C^p$ and the order of error is of type $O\left(N^{-p-\frac{1}{2}}\right)$. We further prove that the series may be differentiated term-by-term twice, and substitution into equation \eqref{eq:SLEIF intro} yields explicit formulas for $\alpha_0, \; \alpha_1$. The remaining coefficients $\alpha_n$ can then be obtained recursively through a straightforward integration procedure.

The second part of this work is devoted to the application of the obtained NSBF representation to the solution of spectral boundary value problems. We begin by examining the main properties of the problem with Dirichlet boundary conditions and establishing key features of its spectral data, including the asymptotic behavior of the eigenvalues and the completeness of the corresponding eigenfunctions. We then construct the Weyl function associated with the Dirichlet problem using the NSBF representation. While classical analytic representations of Weyl functions typically take the form of Mittag–Leffler-type rational series involving the spectral data, such expansions often exhibit slow convergence in numerical computations. To our knowledge, no alternative analytic representations for Weyl functions are currently available, and thus NSBF series provide a new and computationally efficient approach. Using a Darboux transformation \cite{mineimpedance3}, we show that the problem with Neumann conditions for an equation with conductivity $\kappa$ is equivalent to  the Dirichlet problem with conductivity $1/\kappa$. Except for the first eigenvalue, the two problems share the same spectral data. Similar relationships are obtained for Sturm–Liouville problems with general self-adjoint boundary conditions. Finally, we illustrate the effectiveness of the proposed method for computing eigenvalues and the Weyl function through  numerical examples, including one equation with an exact characteristic equation and another with a non-smooth conductivity coefficient.

The paper is structure as follows. Section 2 presents the preliminary material concerning the Sturm–Liouville equation in impedance form and its solutions, including the SPPS and integral representations. In Section 3, we derive the Fourier series representation of the transmutation kernel and, based on this representation, we obtain the NSBF expansion for the solutions. Section 4 is devoted to the derivation and explicit construction of the NSBF coefficients. In Section 5, we study boundary value problems associated with the impedance equation. The main properties of these problems are discussed, and their solution is addressed using the NSBF representations. Finally, Section 6 presents numerical examples that demonstrate the effectiveness of the proposed method.

	\section{Background on transmutation operators for the SLEIF}
    Through the text, we use the notation $\mathbb{N}_0:=\mathbb{N}\cup\{0\}$. Let $I\subset \mathbb{R}$ be an interval. For a positive measurable  function $w$ on $I$ and $1\leq p<\infty$, we denoted by  $L_w^p(I)$ the space of measurable functions that are integrable with respect to the weight $w$. When $w\equiv 1$, we simply write $L^p(I)$. We use the standard notation $W^{k,p}(I)$ for the Sobolev space of functions in $L^p(I)$ whose first $k$ distributional derivatives belong to $L^p(I)$. Similar notation is used for function spaces on higher-dimensional domains.\\
	Let us consider the Sturm-Liouville equation in impedance form (SLEIF)
	\begin{eqnarray}\label{eq:SLEIF}
		-\frac{d}{dx}\left(\kappa(x)\frac{d u(x)}{dx}\right) = \lambda \kappa(x) u(x),\quad 0<x<L,\; \lambda \in \mathbb{C},
	\end{eqnarray}
	where the function $\kappa\in W^{1,2}(0,L)$ (in general complex-valued) does not vanish on the entire segment $[0,L]$ and satisfies the normalizing condition $\kappa(0)=1$.  The function $\kappa$ is called the {\it conductivity function}  associated with Eq. \eqref{eq:SLEIF} and its corresponding {\it impedance function} is defined by $a^2(x):=\kappa(x)$. Eq. \eqref{eq:SLEIF} can be rewritten as
	\begin{equation}\label{eq:2ndSLEIF}
		-u''(x)+q(x)u'(x)=\rho^2 u(x), \qquad \mbox{where  } q(x)=-\frac{\kappa'(x)}{\kappa(x)} \mbox{ and } \rho^2=\lambda.
	\end{equation}
	Let $e_{\kappa}(\rho,x)$ be the unique solution of \eqref{eq:SLEIF} satisfying the initial conditions
	\begin{equation}\label{eq:initialconde}
		e_{\kappa}(\rho,0)=1\;\; \mbox{ and }\; \;e'_{\kappa}(\rho,0)=i\rho.
	\end{equation}
	The solution $e_{\kappa}(\rho,x)$ admits some analytical representations. The first one is given in terms of the so-called spectral parameter power series (SPPS). Define the system of functions $\{\varphi_{\kappa}^{(k)}\}_{k=0}^{\infty}$ (called the {\it formal powers} associated with $\kappa$) as follows:
	\begin{align}
		\varphi_{\kappa}^{(0)}(x) = & 1,\label{eq:1stformalpower}\\
		\varphi_{\kappa}^{(1)}(x) = & \int_0^x\frac{dt}{\kappa(t)},\label{eq:2ndformalpower}\\
		\varphi_{\kappa}^{(k)}(x) = & k(k-1)\int_0^x\frac{1}{\kappa(t)}\left[\int_0^t\kappa(s)\varphi_{\kappa}^{(k-2)}(s)ds\right]dt,\quad  k\geq 2. \label{eq:nthformalpower}
	\end{align}
	
	\begin{theorem}[\cite{sppscampos,spps}]\label{th:spps}
		The solution $e_{\kappa}(\rho,x)$ admits the SPPS representation
		\begin{equation}\label{eq:SPPSseries}
			e_{\kappa}(\rho,x)=\sum_{k=0}^{\infty}\frac{(i\rho)^k\varphi_{\kappa}^{(k)}(x)}{k!}.
		\end{equation}
		This series converges uniformly and absolutely with respect to $x$ on $[0,L]$, as does the series obtained by termwise differentiation once. The series of second derivatives converges in $L^2(0,L)$. The series converges uniformly and absolutely with respect to $\rho$ on compact subsets of the complex plane.
	\end{theorem}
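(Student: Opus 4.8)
The plan is to verify directly that the series on the right-hand side of \eqref{eq:SPPSseries} defines a function satisfying the initial value problem \eqref{eq:SLEIF}, \eqref{eq:initialconde}, and then to invoke the stated uniqueness of that problem to identify its sum with $e_\kappa(\rho,x)$. The technical core is a growth estimate for the formal powers. Since $\kappa\in W^{1,2}(0,L)\hookrightarrow C[0,L]$ is non-vanishing, the quantities $M:=\max_{[0,L]}|\kappa|$ and $m:=\min_{[0,L]}|\kappa|>0$ are finite and positive; set $r:=\sqrt{M/m}\geq 1$ and $C:=\max\{1,1/m\}$. I would prove by induction on $k$, splitting into even and odd indices, the bound
\[
|\varphi_\kappa^{(k)}(x)|\leq C\,(rx)^k,\qquad 0\leq x\leq L .
\]
The inductive step uses the recursion \eqref{eq:nthformalpower} together with $|1/\kappa|\leq 1/m$ and $|\kappa|\leq M$; the factor $k(k-1)$ is exactly absorbed by the double integration of $s^{k-2}$, which is why no spurious combinatorial growth appears. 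Differentiating \eqref{eq:nthformalpower} gives $(\varphi_\kappa^{(k)})'(x)=k(k-1)\kappa(x)^{-1}\int_0^x\kappa(s)\varphi_\kappa^{(k-2)}(s)\,ds$ for $k\geq2$, and the same estimate yields $|(\varphi_\kappa^{(k)})'(x)|\leq C\,k\,r^{k}x^{k-1}$.

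With these bounds the convergence assertions reduce to comparison with the exponential series. The majorant $\sum_k C(|\rho|rx)^k/k!=Ce^{|\rho|rx}$ shows that \eqref{eq:SPPSseries} and its termwise $x$-derivative converge absolutely and uniformly for $x\in[0,L]$, and uniformly on compact subsets of the $\rho$-plane; hence the sum $u(\rho,x)$ is continuously differentiable in $x$ and $u'$ equals the differentiated series. I would then resist differentiating a second time directly, since $(\varphi_\kappa^{(k)})''$ involves $\kappa'$, which is merely $L^2$. Instead, the cleanest route is to work with the quasi-derivative $\kappa u'$: multiplying the differentiated series by $\kappa$, using $\kappa(\varphi_\kappa^{(k)})'=k(k-1)\int_0^x\kappa\varphi_\kappa^{(k-2)}$ for $k\geq2$, and shifting the summation index $m=k-2$ gives the closed form
\[
\kappa(x)u'(\rho,x)=i\rho-\rho^2\int_0^x\kappa(s)\,u(\rho,s)\,ds,
\]
the interchange of sum and integral being justified by the uniform convergence just established.

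This identity does the remaining work at once. It shows $\kappa u'$ is absolutely continuous with $(\kappa u')'=-\rho^2\kappa u$ a.e., i.e. $u$ solves \eqref{eq:SLEIF}; evaluating at $x=0$ and using $\varphi_\kappa^{(k)}(0)=0$ for $k\geq1$ and $\kappa(0)=1$ gives $u(\rho,0)=1$ and $(\kappa u')(0)=i\rho$, whence $u'(\rho,0)=i\rho$, exactly \eqref{eq:initialconde}. By uniqueness of the solution of this Volterra-type initial value problem we conclude $u=e_\kappa$. For the last assertion I would write $u'=(1/\kappa)(\kappa u')$ and differentiate the product; since $1/\kappa\in W^{1,2}$ and $\kappa u'$ is absolutely continuous, one obtains $u''=-(\kappa'/\kappa^2)(\kappa u')-\rho^2 u$. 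The same index shift applied to $\sum_k(i\rho)^k(\varphi_\kappa^{(k)})''/k!$ splits it into $-\rho^2$ times the uniformly convergent series for $u$ and $-(\kappa'/\kappa^2)$ times the uniformly convergent series for $\kappa u'$; convergence in $L^2(0,L)$ then follows from the estimate $\|g(h_N-h)\|_{L^2}\leq\|g\|_{L^2}\|h_N-h\|_\infty$ applied with $g=\kappa'/\kappa^2\in L^2$ and $h_N\to h$ uniform.

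The main obstacle is precisely the low regularity $\kappa\in W^{1,2}$: because $\kappa'$ is only square-integrable, the second derivative of the solution cannot converge uniformly, and naive term-by-term double differentiation is not pointwise justified. The device that resolves this is to phrase everything through $\kappa u'$, which stays absolutely continuous, deferring the single problematic derivative of $\kappa$ to the very last step, where it is handled in the $L^2$ sense. I would also keep the estimates explicit in $M$, $m$, and $r$, so that the $\rho$-convergence is visibly locally uniform, as needed for the analyticity arguments used later.
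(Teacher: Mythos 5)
The paper does not prove this theorem itself: it is quoted as background from \cite{sppscampos,spps}, so there is no in-paper proof to compare against. Your argument is correct and follows the standard SPPS proof strategy of those references --- inductive bounds $|\varphi_\kappa^{(k)}(x)|\leq C(rx)^k$ giving an exponential majorant, verification through the quasi-derivative $\kappa u'$ that the sum solves the initial value problem, and an $L^2$ treatment of the second derivatives via $\kappa'/\kappa^2\in L^2$ --- so nothing further is needed.
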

	It is known that the solution $e_{\kappa}(\rho,x)$ admits an integral representation involving an $L^2$ kernel that preserves certain smoothness properties of the conductivity $\kappa$. The following theorem summarizes results presented in \cite[sec. 3]{mineimpedance1}, \cite[Sec. 5 and 6]{mineimpedance3} and \cite[Th. 40]{mineshrodinger}

     \begin{theorem}
         For every $x\in (0,L]$,  the solution $e_{\kappa}(\rho,x)$ admits the integral representation 
         \begin{equation}\label{eq:integralrep}
             e_{\kappa}(\rho,x)=e^{i\rho x}-i\rho \int_{-x}^xK_{\kappa}(x,t)e^{i\rho t}dt,
         \end{equation}
         where the kernel $K_{\kappa}(x,t)$ belongs to $W^{1,2}(\mathcal{T}_L)\cap C(\overline{\mathcal{T}_L})$, with $\mathcal{T}_L:=\{(x,t)\in \mathbb{R}^2\,|\, 0< x< L, |t|< x\}$ (see Figure \ref{fig:triangle}), and satisfies the Goursat conditions 
         \begin{equation}\label{eq:goursatcond}
             K_{\kappa}(x,x)=1-\frac{1}{\sqrt{\kappa(x)}},\quad K_{\kappa}(x,-x)=0.
         \end{equation}
         Moreover, if $\kappa\in C^p[0,L]$ with $p\in \{1,2\}$, hence $K_{\kappa}\in C^p(\overline{\mathcal{T}_L})$, 
     \end{theorem}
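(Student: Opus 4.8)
The plan is to characterize the kernel $K_\kappa$ as the solution of a Goursat problem and then to construct it by successive approximations, reading off the Goursat data from the boundary terms. First I would substitute the ansatz $u(x)=e^{i\rho x}-i\rho\int_{-x}^x K(x,t)e^{i\rho t}\,dt$ into \eqref{eq:2ndSLEIF} and integrate by parts twice in $t$, using $\rho^2 e^{i\rho t}=-\partial_t^2 e^{i\rho t}$ to trade the spectral parameter for $t$-derivatives of $K$. Collecting the purely integral contributions forces $K$ to satisfy, on $\mathcal{T}_L$, the hyperbolic equation $K_{xx}-K_{tt}=q(x)K_x$ (understood in the weak/distributional sense, since $q=-\kappa'/\kappa\in L^2$), while the boundary terms carrying $e^{\pm i\rho x}$ must cancel separately. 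The $\rho$-dependent boundary terms cancel automatically, and the surviving terms give the ODEs $2\frac{d}{dx}K(x,x)+q(x)\bigl(1-K(x,x)\bigr)=0$ along the diagonal and $2\frac{d}{dx}K(x,-x)-q(x)K(x,-x)=0$ along the anti-diagonal. The condition $u'(0)=i\rho$ forces $K(0,0)=0$; integrating the two ODEs with this normalization and $\kappa(0)=1$ (so that $\int_0^x q=-\ln\kappa$) yields precisely the Goursat conditions \eqref{eq:goursatcond}. The derivation divides by $i\rho$, so it is valid for $\rho\neq0$ and extends to $\rho=0$ by continuity.

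Next I would pass to characteristic coordinates $\xi=x+t$, $\eta=x-t$, which brings the principal part into canonical form: with $\tilde K(\xi,\eta):=K\bigl(\tfrac{\xi+\eta}{2},\tfrac{\xi-\eta}{2}\bigr)$ and $\tilde q(\xi,\eta):=q\bigl(\tfrac{\xi+\eta}{2}\bigr)$ the equation reads $4\tilde K_{\xi\eta}=\tilde q\,(\tilde K_\xi+\tilde K_\eta)$, the diagonal becoming $\eta=0$ and the anti-diagonal $\xi=0$. Integrating along characteristics and using $\tilde K_\eta(0,\eta)=0$ together with $\tilde K_\xi(\xi,0)=\frac{d}{d\xi}\bigl(1-\kappa(\xi/2)^{-1/2}\bigr)=:g(\xi)\in L^2$, I would recast the problem as the coupled Volterra system
\begin{align*}
\tilde K_\xi(\xi,\eta) &= g(\xi)+\int_0^\eta \tfrac14\,\tilde q(\xi,\eta')\bigl(\tilde K_\xi+\tilde K_\eta\bigr)(\xi,\eta')\,d\eta',\\
\tilde K_\eta(\xi,\eta) &= \int_0^\xi \tfrac14\,\tilde q(\xi',\eta)\bigl(\tilde K_\xi+\tilde K_\eta\bigr)(\xi',\eta)\,d\xi'.
\end{align*}

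The hard part is the low regularity: since $q\in L^2$ only, the iteration kernel is in $L^2$ and the products $\tilde q\,\tilde K_\xi$ lie a priori only in $L^1$, so the classical Goursat theory (continuous coefficients) does not apply directly. I would run a Picard iteration on the pair $(\tilde K_\xi,\tilde K_\eta)$ in an $L^2$-based norm on the triangle, exploiting the Volterra (causal) structure: each iterate gains an integration in one characteristic variable, so by Cauchy--Schwarz and Fubini the iteration operator is quasi-nilpotent on the finite triangle and the Neumann series converges in $L^2(\mathcal{T}_L)$. This produces $\tilde K_\xi,\tilde K_\eta\in L^2$, hence $K\in W^{1,2}(\mathcal{T}_L)$, and the integrated form shows that $K$ extends continuously to $\overline{\mathcal{T}_L}$. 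A clean alternative that sidesteps the weak formulation is to prove the representation first for smooth $\kappa$ by the classical Goursat theorem, establish a priori bounds $\|K_\kappa\|_{W^{1,2}}+\|K_\kappa\|_{C}\le C\bigl(\|\kappa\|_{W^{1,2}}\bigr)$ depending only on the $W^{1,2}$-norm of $\kappa$ and a lower bound for $|\kappa|$, and then approximate a general $\kappa\in W^{1,2}$ by smooth nonvanishing $\kappa_n\to\kappa$ with $\kappa_n(0)=1$, passing to the limit by compactness. I expect this existence-in-low-regularity step to be the main obstacle.

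Finally, for the smoothness claim, if $\kappa\in C^p[0,L]$ then $q\in C^{p-1}$, so $g$ and $\tilde q$ in the Volterra system are correspondingly regular and a standard bootstrap promotes $K_\kappa$ to $C^p(\overline{\mathcal{T}_L})$. To close the argument I would reverse the first step: because the constructed $K$ solves the Goursat problem, substituting it back into the integral representation and integrating by parts shows that the resulting $u$ solves \eqref{eq:2ndSLEIF} with $u(0)=1$ and $u'(0)=i\rho$ (the latter using $K(0,0)=0$). Uniqueness of the initial value problem then identifies $u$ with $e_\kappa(\rho,x)$, which establishes the representation \eqref{eq:integralrep} and completes the proof.
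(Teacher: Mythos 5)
The paper does not actually prove this theorem: it is stated as a summary of results imported from \cite[Sec. 3]{mineimpedance1}, \cite[Sec. 5 and 6]{mineimpedance3} and \cite[Th. 40]{mineshrodinger}, so there is no internal argument to compare yours against. Your outline reconstructs what those references do, and it is structurally correct: the formal substitution does yield the characteristic ODEs $2\frac{d}{dx}K(x,x)+q(x)\bigl(1-K(x,x)\bigr)=0$ and $2\frac{d}{dx}K(x,-x)-q(x)K(x,-x)=0$, which with $K(0,0)=0$ and $\kappa(0)=1$ integrate exactly to \eqref{eq:goursatcond}; the passage to characteristic coordinates, the Volterra system for $(\tilde K_\xi,\tilde K_\eta)$, and the identification of $u$ with $e_\kappa(\rho,x)$ via uniqueness for the initial value problem are all sound. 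Two points deserve emphasis. First, the derivation of the hyperbolic equation uses a second integration by parts in $t$ ($\int K_t v''\,dt$), which is not available when $K(x,\cdot)$ is merely $W^{1,2}$; your alternative route (prove everything for smooth $\kappa$, establish a priori bounds in terms of $\|\kappa'/\kappa\|_{L^2}$ and $\inf|\kappa|$, then pass to the limit along $\kappa_n\to\kappa$ using continuous dependence of $e_{\kappa_n}$ on the coefficient) is the clean way to handle this and is essentially the strategy of the cited works, cf.\ the analogous approximation argument used at the end of the proof of the recursion theorem in Section 4. Second, the Picard iteration in a plain $L^2(\mathcal{T}_L)$ norm is not quite enough as stated: since $\tilde q$ depends only on $\xi+\eta$ and lies in $L^2$, the natural contraction estimates come from mixed norms of the type $\sup_\xi\|\cdot\|_{L^2_\eta}$ (or suitably weighted versions), which is also what produces the continuity of $K$ up to $\overline{\mathcal{T}_L}$ and the bound quoted in Theorem \ref{Th:approximationkernel} via \cite[Remark 39]{mineshrodinger}. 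You correctly flag this as the main obstacle; as a proof it remains a sketch at that step, but the plan is the right one.
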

    \begin{figure}[h!]
    \centering
    \includegraphics[width=5.5cm]{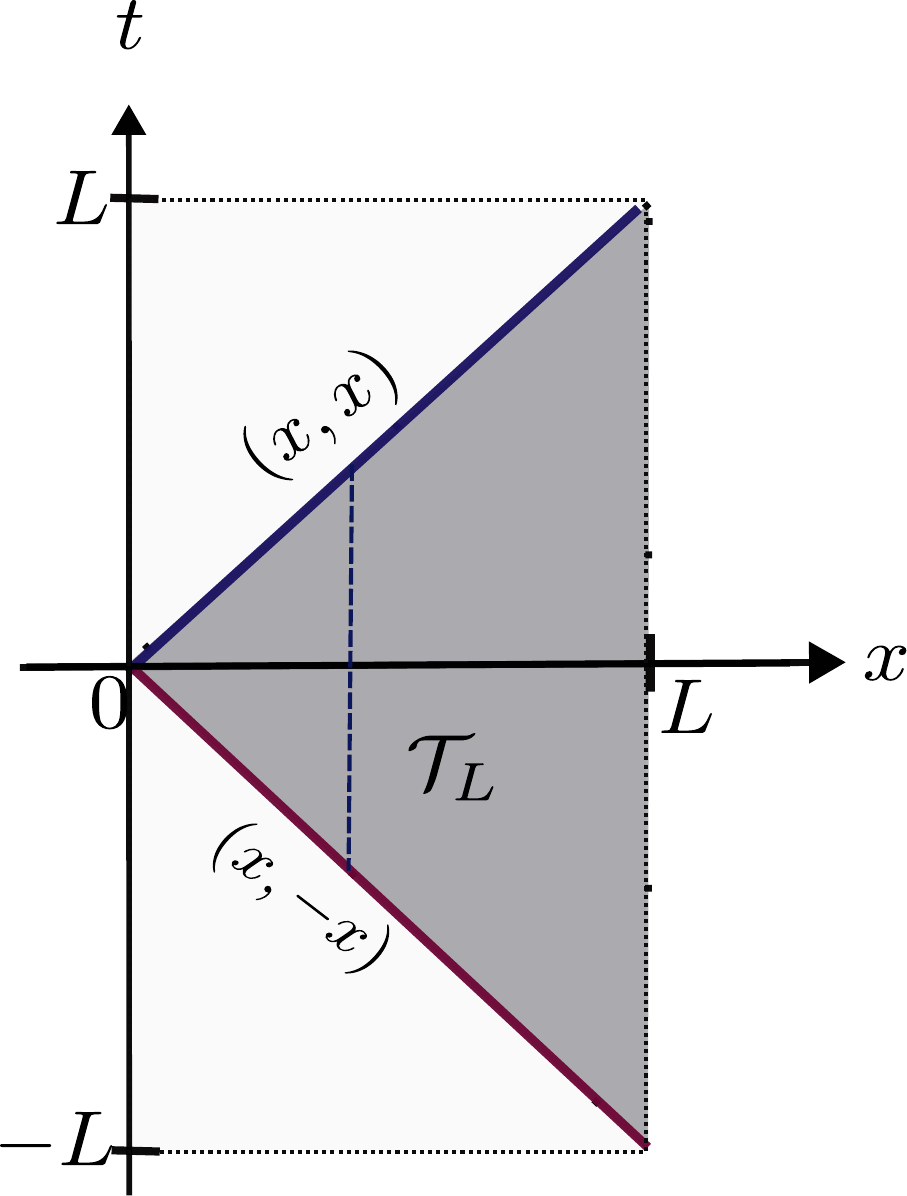}
    \caption{The domain $\mathcal{T}_L$. The domain of integration $-x<t<x$ is indicated by the blue dotted line.}
    \label{fig:triangle}
\end{figure}
	The kernel $K_{\kappa}$ is called the {\it canonical transmutation} kernel for Eq. \eqref{eq:SLEIF}. Define the operator
	\begin{equation}\label{eq:transmop}
		\mathbf{T}_{\kappa}u(x):= u(x)-\int_{-x}^{x}K_{\kappa}(x,t)u'(t)dt,\quad u\in C^1[-L,L].
	\end{equation} 
    Hence, relation \eqref{eq:integralrep} can be written as
    \begin{equation*}
        e_{\kappa}(\rho,x)=\mathbf{T}_{\kappa}[e^{i\rho x}].
    \end{equation*}
    The operator $\mathbf{T}_{\kappa}$ is called the {\it canonical transmutation operator} for Eq. \eqref{eq:SLEIF}  \cite{mineimpedance1,mineimpedance3,mineshrodinger}.

    \begin{theorem}[\cite{mineimpedance3,mineshrodinger}]\label{Th:transmutationproperty}
	The operator $\mathbf{T}_{\kappa}:W^{1,2}(-L,L)\rightarrow W^{1,2}(0,L)$ is bounded and satisfies that if $v\in W^{3,2}(-L,L)$ is a solution of $v''+\rho^2v=0$ in $(-L,L)$, then $u=\mathbf{T}_{\kappa}v$ is a solution of Eq. \eqref{eq:SLEIF} satisfying the initial conditions $u(0)=v(0)$ and  $u'(0)=v'(0)$. Furthermore, the following transmutation property holds:
		\begin{equation}\label{eq:transprop}
			\mathbf{T}_{\kappa}[x^k]=\varphi_{\kappa}^{(k)}(x),\qquad \forall k\in \mathbb{N}_0.
		\end{equation}
	\end{theorem}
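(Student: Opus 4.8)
The plan is to dispatch the three assertions in turn, deriving the algebraic transmutation property \eqref{eq:transprop} first and then combining it with the already recorded identity $e_{\kappa}(\rho,x)=\mathbf{T}_{\kappa}[e^{i\rho x}]$ to obtain the mapping property. For boundedness I would write $\mathbf{T}_{\kappa}u=u-\mathbf{G}u$, where $\mathbf{G}u(x):=\int_{-x}^{x}K_{\kappa}(x,t)u'(t)\,dt$. The restriction $u\mapsto u|_{(0,L)}$ is plainly bounded from $W^{1,2}(-L,L)$ into $W^{1,2}(0,L)$, so it suffices to bound $\mathbf{G}$. Since $u\in W^{1,2}(-L,L)$ yields $u'\in L^{2}(-L,L)$ and $K_{\kappa}\in L^{2}(\mathcal{T}_L)$, the operator $\mathbf{G}$ is Hilbert--Schmidt, hence bounded from $L^2$ into $L^2$, which controls $\|\mathbf{G}u\|_{L^2(0,L)}$ by $\|u'\|_{L^2}$. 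For the derivative I would differentiate the variable-endpoint integral by the Leibniz rule; the endpoint contribution at $t=-x$ vanishes by the Goursat condition $K_{\kappa}(x,-x)=0$ from \eqref{eq:goursatcond}, leaving
\[
(\mathbf{G}u)'(x)=K_{\kappa}(x,x)\,u'(x)+\int_{-x}^{x}\partial_xK_{\kappa}(x,t)\,u'(t)\,dt .
\]
Here $K_{\kappa}(x,x)=1-1/\sqrt{\kappa(x)}$ is continuous, so the first term is in $L^2$, while $\partial_xK_{\kappa}\in L^2(\mathcal{T}_L)$ makes the second term another Hilbert--Schmidt image; adding the estimates gives $\|\mathbf{T}_{\kappa}u\|_{W^{1,2}(0,L)}\leq C\|u\|_{W^{1,2}(-L,L)}$.

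The hard part is precisely this Leibniz differentiation, since $K_{\kappa}$ lies only in $W^{1,2}(\mathcal{T}_L)$ and is not classically differentiable: the diagonal trace $K_{\kappa}(x,x)$ and the boundary value $K_{\kappa}(x,-x)$ must be read off the continuous representative on $\overline{\mathcal{T}_L}$, and the interchange of differentiation with the integral is not automatic. I would justify the displayed formula by first proving it for a sequence of smooth kernels approximating $K_{\kappa}$ in $W^{1,2}(\mathcal{T}_L)$ (their diagonal traces converging uniformly through the continuous embedding), and then passing to the limit in $L^2(0,L)$, the boundary terms being controlled by the uniform convergence of the traces.

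For the transmutation property I would expand $e^{i\rho x}=\sum_{k=0}^{\infty}\frac{(i\rho)^{k}}{k!}x^{k}$; this series and its termwise derivative converge uniformly on $[-L,L]$, hence in $W^{1,2}(-L,L)$. Boundedness of $\mathbf{T}_{\kappa}$ then permits interchanging the operator with the summation, so that $e_{\kappa}(\rho,x)=\mathbf{T}_{\kappa}[e^{i\rho x}]=\sum_{k=0}^{\infty}\frac{(i\rho)^{k}}{k!}\mathbf{T}_{\kappa}[x^{k}]$, with convergence in $W^{1,2}(0,L)\hookrightarrow C[0,L]$ and therefore pointwise in $x$. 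Comparing this with the SPPS expansion \eqref{eq:SPPSseries} of Theorem \ref{th:spps} and matching the coefficients of the entire function $\rho\mapsto e_{\kappa}(\rho,x)$ yields $\mathbf{T}_{\kappa}[x^{k}]=\varphi_{\kappa}^{(k)}$ for every $k\in\mathbb{N}_0$.

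It remains to settle the mapping property and the initial data; note that solutions of $v''+\rho^{2}v=0$ are automatically smooth, so the $W^{3,2}$ hypothesis is harmless. For fixed $\rho\neq0$ the functions $e^{\pm i\rho x}$ form a basis of this solution space, and each is sent by $\mathbf{T}_{\kappa}$ to $e_{\kappa}(\pm\rho,\cdot)$, a solution of \eqref{eq:SLEIF} with $\lambda=\rho^{2}$; linearity then covers this case. For $\rho=0$ the space is spanned by $1$ and $x$, and the transmutation property gives $\mathbf{T}_{\kappa}[1]=\varphi_{\kappa}^{(0)}=1$ and $\mathbf{T}_{\kappa}[x]=\varphi_{\kappa}^{(1)}$, both of which solve \eqref{eq:SLEIF} with $\lambda=0$, since $\kappa\,(\varphi_{\kappa}^{(1)})'=1$ by \eqref{eq:2ndformalpower}. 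Finally, $u(0)=\mathbf{T}_{\kappa}v(0)=v(0)$ is immediate because the integration range collapses, and from the derivative formula above $u'(0)=v'(0)-K_{\kappa}(0,0)\,v'(0)=v'(0)$, as $K_{\kappa}(0,0)=1-1/\sqrt{\kappa(0)}=0$ by the normalization $\kappa(0)=1$.
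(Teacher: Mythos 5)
The paper does not prove this theorem: it is imported verbatim from \cite{mineimpedance3,mineshrodinger} as background, so there is no internal proof to compare against. Your argument is a legitimate self-contained derivation from the other stated background facts (the integral representation \eqref{eq:integralrep} with its Goursat conditions \eqref{eq:goursatcond}, and the SPPS expansion \eqref{eq:SPPSseries}), and its overall logic is sound: the Hilbert--Schmidt bound for the zero-order part, the Leibniz differentiation with the $t=-x$ endpoint killed by $K_{\kappa}(x,-x)=0$, the identification $\mathbf{T}_{\kappa}[x^k]=\varphi_{\kappa}^{(k)}$ by matching Taylor coefficients in $\rho$ of the two entire functions $e_{\kappa}(\rho,x)=\mathbf{T}_{\kappa}[e^{i\rho x}]=\sum_k\frac{(i\rho)^k}{k!}\mathbf{T}_{\kappa}[x^k]$, the reduction of the mapping property to the basis $\{e^{\pm i\rho x}\}$ (resp.\ $\{1,x\}$ for $\rho=0$), and the initial conditions via $K_{\kappa}(0,0)=1-1/\sqrt{\kappa(0)}=0$. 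This is essentially the standard way such transmutation properties are established in the cited literature, so you have reconstructed the external proof rather than found a new one.

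One step is stated imprecisely and should be repaired. In justifying the Leibniz formula you claim the diagonal traces of the smooth approximating kernels converge \emph{uniformly} ``through the continuous embedding''; but $W^{1,2}$ of a two-dimensional domain does not embed into $C$, so approximation of $K_{\kappa}$ in $W^{1,2}(\mathcal{T}_L)$ only gives convergence of the traces on the two boundary segments $t=\pm x$ in $L^2$ (via the trace theorem for the Lipschitz domain $\mathcal{T}_L$). This is still enough: $L^2$ convergence of the traces combined with $u'\in L^2$ yields convergence of the boundary terms in $L^1(0,L)$, hence in the sense of distributions, which identifies the weak derivative of $\mathbf{G}u$; the boundedness of the limiting trace $K_{\kappa}(x,x)=1-1/\sqrt{\kappa(x)}$ (guaranteed separately by the continuity of $K_{\kappa}$ on $\overline{\mathcal{T}_L}$ asserted in the background theorem) then places that term in $L^2$ and completes the $W^{1,2}$ bound. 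With that correction the proof goes through.
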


\section{Neumann series of Bessel functions representation}

\subsection{The Fourier-Legendre representation for the kernel}
	Let $\{P_n(z)\}_{n=0}^{\infty}$ be the Legendre polynomials in $L^2(-1,1)$ with norm $\|P_n\|_{L^2(-1,1)}=\sqrt{\frac{2}{2n+1}}$, $n\in \mathbb{N}_0$. It is well known that $\{P_n(z)\}_{n=0}^{\infty}$ is an orthogonal basis for $L^2(-1,1)$. For a fixed $x\in (0,L]$, $K_{\kappa}(x,\cdot)\in L^2(-x,x)$, and therefore it admits a Fourier expansion in terms of Legendre polynomials:
\begin{equation}\label{eq:FourierLegendre}
	K_{\kappa}(x,t)=\sum_{n=0}^{\infty}\frac{a_n(x)}{x}P_n\left(\frac{t}{x}\right).
\end{equation}
The series converges with respect to $t$ in $L^2(-x,x)$, and the Fourier-Legendre coefficients are given by
\begin{equation}\label{eq:FLcoeff}
	a_n(x) = \left(n+\frac{1}{2}\right)\int_{-x}^{x}K_{\kappa}(x,t)P_n\left(\frac{t}{x}\right)dt.
\end{equation}
Writting $P_n(z)=\sum_{k=0}^{n}l_{k,n}z^k$, we obtain a representation for the coefficients $\{a_n(x)\}_{n=0}^{\infty}$ in terms of the formal powers.
\begin{proposition}
	The functions $\{a_n\}_{n=0}^{\infty}$ belong to $W^{2,2}(0,L)$ and satisfy the relations
	\begin{equation}\label{eq:alphaintermsofphi}
		a_n(x)= \left(n+\frac{1}{2}\right)\sum_{k=0}^{n}\frac{l_{k,n}}{(k+1)}\left(\frac{x^{k+1}-\varphi_{\kappa}^{(k+1)}(x)}{x^k}\right)\qquad \forall n\in \mathbb{N}_0.
	\end{equation}
\end{proposition}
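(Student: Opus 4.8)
The plan is to separate the algebraic identity from the regularity claim, treating the identity as a short computation and the $W^{2,2}$-regularity near the origin as the genuinely delicate point.

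First I would substitute the explicit polynomial form $P_n(t/x)=\sum_{k=0}^n l_{k,n}(t/x)^k$ into the definition \eqref{eq:FLcoeff} of $a_n$ and interchange the (finite) sum with the integral, obtaining $a_n(x)=(n+\frac12)\sum_{k=0}^n \frac{l_{k,n}}{x^k}\int_{-x}^x K_{\kappa}(x,t)t^k\,dt$. The moments $\int_{-x}^x K_{\kappa}(x,t)t^k\,dt$ are then evaluated using the transmutation property \eqref{eq:transprop}: applying $\mathbf{T}_{\kappa}$ to the monomial $x^{k+1}$, whose derivative is $(k+1)t^k$, and reading off definition \eqref{eq:transmop} gives $\varphi_{\kappa}^{(k+1)}(x)=x^{k+1}-(k+1)\int_{-x}^x K_{\kappa}(x,t)t^k\,dt$, hence $\int_{-x}^x K_{\kappa}(x,t)t^k\,dt=\frac{x^{k+1}-\varphi_{\kappa}^{(k+1)}(x)}{k+1}$. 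Substituting this back yields formula \eqref{eq:alphaintermsofphi} term by term, which settles the identity.

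For regularity I would first show, by induction on $k$ using the recursion \eqref{eq:nthformalpower}, that each $\varphi_{\kappa}^{(k)}$ lies in $W^{2,2}(0,L)$. The base cases are immediate, and since $\kappa\in W^{1,2}(0,L)$ is continuous and bounded away from zero on $[0,L]$, one has $1/\kappa\in W^{1,2}(0,L)$ because $(1/\kappa)'=-\kappa'/\kappa^2\in L^2$. Differentiating \eqref{eq:nthformalpower} twice and using that in one variable products and quotients (with denominator bounded away from $0$) of $W^{1,2}$ functions remain in $W^{1,2}$, one sees that the first derivative of $\varphi_{\kappa}^{(k)}$ is again in $W^{1,2}$, i.e. $\varphi_{\kappa}^{(k)}\in W^{2,2}$. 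Consequently each numerator $x^{k+1}-\varphi_{\kappa}^{(k+1)}(x)$ is $W^{2,2}$, and on any interval $[\varepsilon,L]$ the quotient by $x^k$ is immediately $W^{2,2}$.

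The main obstacle is the behavior at the origin: dividing a $W^{2,2}$ numerator by $x^k$ is harmless only if the numerator vanishes fast enough, and a naive count fails for $k\ge 3$, since classical derivatives at a point detect only two orders of vanishing. The point is that the numerator really vanishes to order $k+1$: rescaling $t=xs$ in $x^{k+1}-\varphi_{\kappa}^{(k+1)}(x)=(k+1)\int_{-x}^x K_{\kappa}(x,t)t^k\,dt$ gives $\frac{x^{k+1}-\varphi_{\kappa}^{(k+1)}(x)}{x^k}=(k+1)x\int_{-1}^1 K_{\kappa}(x,xs)s^k\,ds$, and since $K_{\kappa}$ is continuous on $\overline{\mathcal{T}_L}$ with $K_{\kappa}(0,0)=0$ by the Goursat conditions \eqref{eq:goursatcond}, this quotient is continuous and vanishes at $x=0$. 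To upgrade continuity to $W^{2,2}$ I would combine the two descriptions of the quotient: the $W^{2,2}$-regularity of the numerator controls the $L^2$ norm of its second derivative, while the integral factorization supplies the correct vanishing rate at $0$, so that Hardy-type inequalities bound $\bigl(x^{-k}(x^{k+1}-\varphi_{\kappa}^{(k+1)})\bigr)'$ and its second derivative in $L^2(0,L)$ near the origin. I expect this $L^2$-control of the quotient near $x=0$, rather than the algebraic identity, to be where the real work lies, and I would carry it out most cleanly by an induction on $k$ proving directly that $\frac{x^{k+1}-\varphi_{\kappa}^{(k+1)}(x)}{x^k}\in W^{2,2}(0,L)$, invoking a Hardy inequality at each step.
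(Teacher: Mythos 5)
Your algebraic derivation of \eqref{eq:alphaintermsofphi} is correct and is exactly the argument the paper relies on (the paper itself only cites Prop.~38 of the reference on transmutation operators for the impedance form rather than writing it out): expanding $P_n$ into monomials in \eqref{eq:FLcoeff} and evaluating the moments $\int_{-x}^{x}K_{\kappa}(x,t)t^{k}\,dt$ through $\mathbf{T}_{\kappa}[x^{k+1}]=\varphi_{\kappa}^{(k+1)}$ gives the formula term by term. Your reduction of the regularity claim to the behaviour of $x^{-k}\bigl(x^{k+1}-\varphi_{\kappa}^{(k+1)}(x)\bigr)$ at the origin, and the rescaling $t=xs$ showing that this quotient is continuous and vanishes at $0$ (using $K_{\kappa}(0,0)=0$ from \eqref{eq:goursatcond}), are also sound.

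The mechanism you propose for the $W^{2,2}$ bound near $x=0$, however, does not close. Writing $F=x^{k+1}-\varphi_{\kappa}^{(k+1)}$, the second derivative of the quotient is $F''/x^{k}-2kF'/x^{k+1}+k(k+1)F/x^{k+2}$, and no Hardy-type inequality converts the two facts you have --- $F''\in L^{2}$ and $F=o(x^{k+1})$ --- into $F''/x^{k}\in L^{2}$. Hardy's inequality runs in the opposite direction (it bounds $F/x$ by $F'$, not $F''/x^{k}$ by $F''$), and the implication is simply false in general: for $F''(x)=\cos(x^{-10})$ one gets $F(x)=O(x^{12})$ by oscillation, yet $F''/x^{2}\notin L^{2}$ near $0$, so $F/x^{2}\notin W^{2,2}$. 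What rescues the statement is the specific structure of the formal powers, which you never use beyond their vanishing order. Setting $\psi_{m}=x^{m}-\varphi_{\kappa}^{(m)}$ and differentiating \eqref{eq:nthformalpower} (after writing $\kappa(x)x^{m-1}=\int_{0}^{x}\bigl[(m-1)\kappa(s)s^{m-2}+s^{m-1}\kappa'(s)\bigr]ds$ for $m\geq 2$) gives $\psi_{m}'=\frac{m}{\kappa}\bigl[(m-1)\int_{0}^{x}\kappa\psi_{m-2}+\int_{0}^{x}s^{m-1}\kappa'\bigr]$ and hence $\psi_{m}''=-\frac{\kappa'}{\kappa}\psi_{m}'+m(m-1)\psi_{m-2}+\frac{m\,x^{m-1}\kappa'}{\kappa}$; an induction yielding $\psi_{m}=O(x^{m+1/2})$ and $\psi_{m}'=O(x^{m-1/2})$ then shows that each of these three terms, divided by $x^{m-1}$, lies in $L^{2}(0,L)$, and the lower-order terms $\psi_{m}'/x^{m}$ and $\psi_{m}/x^{m+1}$ are $O(x^{-1/2})\in L^{2}$. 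In short: you must differentiate the recursion itself to see that the \emph{second derivative} of the numerator also vanishes to the required order; the generic combination of $W^{2,2}$ membership with pointwise vanishing of the numerator is not enough.
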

\begin{proof}
    The argument is identical to the proof of \cite[Prop.~38]{mineimpedance1}, replacing the use of \cite[Th.~37]{mineimpedance1} with \cite[Th.~37]{mineimpedance3}, together with the transmutation property \eqref{eq:transprop}.
\end{proof}

\begin{remark}
    Given the characteristic condition \eqref{eq:goursatcond}
    \[
     K_{\kappa}(x,x)=1-\frac{1}{\sqrt{\kappa(x)}},
    \]
    using the Fourier-Legendre series representation \eqref{eq:FourierLegendre} together with the fact that $P_n=\left(\frac{x}{x}\right)=P_n(1)=1$ for every $n\in \mathbb{N}_0$ (see \cite[Formula (17) from p. 124]{suetin}), we obtain the identity
    \begin{equation}
    1-\frac{1}{\sqrt{\kappa(x)}}= \sum_{n=0}^\infty \frac{a_n(x)}{x}.
        \label{eq:goursatseries}
    \end{equation}
\end{remark}
Denote the $N$-th partial sum of \eqref{eq:FourierLegendre} by $K_{\kappa,N}(x,t):=\sum_{n=0}^N\frac{a_n(x)}{x}P_n\left(\frac{t}{x}\right)$.
\begin{proposition}[\cite{mineimpedance1}, Prop. 37]\label{Prop.convergenceoffouriserseries}
    Let $p\in \{0,1\}$. If $\kappa\in C^{p+1}[0,L]$, then there exists a constant $c_p>0$ such that the following inequality holds
    \begin{equation}\label{eq:remainderfourer}
        \sup_{\overset{0<x\leq L}{-x\leq t\leq x}}|K_{\kappa}(x,t)-K_{\kappa,N}(x,t)|\leq \frac{c_pM_pL^{p+1}}{N^{p+\frac{1}{2}}}\quad \forall N>p,
    \end{equation}
    where $M_p:=\left\|\frac{\partial^{p+1}K_{\kappa}}{\partial t^{p+1}}\right\|_{C(\overline{\mathcal{T}_L})}$.
\end{proposition}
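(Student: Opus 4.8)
The plan is to reduce the estimate to the classical theory of uniform (Chebyshev-norm) convergence of Fourier--Legendre expansions on the fixed reference interval $[-1,1]$, and then to combine a Lebesgue-constant bound with a Jackson-type best-approximation estimate.

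First I would rescale. Fixing $x\in(0,L]$ and setting $s=t/x$, define $g_x(s):=K_{\kappa}(x,xs)$ for $s\in[-1,1]$. By the change of variables $t=xs$ in \eqref{eq:FLcoeff}, the Fourier--Legendre coefficients of $g_x$ on $[-1,1]$ are exactly $a_n(x)/x$, so the rescaled $N$-th partial sum of $g_x$ equals $K_{\kappa,N}(x,xs)$ and hence
\[
 \sup_{-x\le t\le x}|K_{\kappa}(x,t)-K_{\kappa,N}(x,t)| = \|g_x - S_N g_x\|_{C[-1,1]},
\]
where $S_N$ denotes the $N$-th Fourier--Legendre partial-sum operator. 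Since $\kappa\in C^{p+1}[0,L]$ gives $K_{\kappa}\in C^{p+1}(\overline{\mathcal{T}_L})$, the function $g_x$ is of class $C^{p+1}$ and the chain rule yields $g_x^{(p+1)}(s)=x^{p+1}(\partial_t^{p+1}K_{\kappa})(x,xs)$, whence $\|g_x^{(p+1)}\|_{C[-1,1]}\le x^{p+1}M_p\le L^{p+1}M_p$. This single inequality carries all of the $x$-dependence and explains the factor $L^{p+1}M_p$; the remaining task is a statement about Legendre expansions on $[-1,1]$ alone, uniform in the datum $g_x$.

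Next I would invoke two classical facts. (i) The Lebesgue constant of the Fourier--Legendre projection grows like $\sqrt N$: there is an absolute constant $c$ with $\|S_N\|_{C[-1,1]\to C[-1,1]}\le c\sqrt N$. (ii) The Jackson--Favard inequality for algebraic approximation on $[-1,1]$: for $g\in C^{r}[-1,1]$ and $N\ge r$, the error of best uniform approximation by polynomials of degree $\le N$ satisfies $E_N(g)\le C_r N^{-r}\|g^{(r)}\|_{C[-1,1]}$. Since $S_N$ reproduces every polynomial of degree $\le N$, Lebesgue's lemma applied to a best approximant $P^*$ gives
\[
 \|g_x - S_N g_x\|_{C[-1,1]}\le \|g_x-P^*\| + \|S_N(P^*-g_x)\| \le (1+\|S_N\|)\,E_N(g_x).
\]
Taking $r=p+1$ and combining (i), (ii) with the bound on $\|g_x^{(p+1)}\|$ yields
\[
 \|g_x - S_N g_x\|_{C[-1,1]}\le (1+c\sqrt N)\,C_{p+1}\,N^{-(p+1)}L^{p+1}M_p\le \frac{c_pL^{p+1}M_p}{N^{p+\frac12}}
\]
for all $N>p$ (the constraint $N\ge r=p+1$ is exactly $N>p$), which is \eqref{eq:remainderfourer}, the bound being independent of $x$.

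The routine steps are the rescaling and the final assembly; the substantive inputs are fact (i), the $\sqrt N$ growth of the Legendre Lebesgue constant, and the algebraic Jackson inequality (ii). I expect fact (i) to be the main obstacle: it is precisely the cancellation of the $\sqrt N$ from the Lebesgue constant against the extra half-power in the $N^{-(p+1)}$ approximation rate that produces the gain $N^{-(p+1/2)}$, rather than the weaker $N^{-p}$ or $N^{-(p-1/2)}$ one obtains from crude $L^2$ or sup-norm coefficient bounds. Establishing (i) rests on the interior decay $|P_n(\cos\theta)|\lesssim (n\sin\theta)^{-1/2}$ of the Legendre polynomials balanced against their $O(1)$ size near the endpoints $s=\pm1$; this is the classical and delicate part. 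A self-contained alternative avoiding (i) is to integrate by parts $p+1$ times using $(2n+1)P_n=P_{n+1}'-P_{n-1}'$ (the combination $P_{n+1}-P_{n-1}$ vanishing at $\pm1$ kills the boundary terms), thereby expressing $a_n(x)/x$ through the Legendre coefficients of $g_x^{(p+1)}$ with a gain $n^{-(p+1)}$, and then summing the tail $\sum_{n>N}$ against $\|P_n\|_{C[-1,1]}=1$ via Cauchy--Schwarz and Parseval for $g_x^{(p+1)}$; but extracting the sharp exponent $p+\tfrac12$ from that tail again forces one to use the fine, non-uniform size of $P_n$ across $[-1,1]$, so the essential difficulty is the same.
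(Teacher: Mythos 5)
Your proof is correct. The paper does not prove this proposition itself but imports it from \cite{mineimpedance1} (Prop.~37), and your route --- rescaling to $[-1,1]$ so that all $x$-dependence sits in $\|g_x^{(p+1)}\|_{C[-1,1]}\le L^{p+1}M_p$, then combining the $O(\sqrt{N})$ Legendre Lebesgue constant with Jackson's inequality via Lebesgue's lemma --- is precisely the standard argument underlying that reference and the NSBF literature, with the exponent $p+\tfrac12$ arising from the same cancellation you identify.
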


Let $g\in L^2(-1,1)$ and set $\widehat{g}_n=\left(n+\frac{1}{2}\right)\langle g, P_n\rangle_{L^2(-1,1)}$ for $n\in \mathbb{N}_0$. Let $\mathcal{P}_N[-1,1]$ denote the set of polynomial functions of degree at most $N$ on $[-1,1]$ and set$E_N^2(f)=\inf_{p\in \mathcal{P}_N[-1,1]}\|g-p\|_{L^2(-1,1)}$. In this case, $E_N^2(f)=\|g-g_N\|_{L^2(-1,1)}$, where $g_N=\sum_{n=0}^N\widehat{g}_nP_n$.

The following lemma describes the decay rate of the Fourier–Legendre coefficients $\widehat{g}_n$ and of the error $E_N^2(g)$ when $g\in W^{1,2}(-1,1)$.

\begin{lemma}\label{lemma:aproxfourier1}
    There exist positive constants $c_0$ and $c_1$ such that for every $g\in W^{1,2}(-1,1)$, the following estimates hold:
    \begin{itemize}
        \item[(i)] $E_N^2(g)\leq \frac{c_0}{N}\|g'\|_{L^2(-1,1)}$ for all $N\in \mathbb{N}$. 
        \item[(ii)] $|\widehat{g}_n|\leq \frac{c_1}{\sqrt{n}}\|g'\|_{L^2(-1,1)}$ for all $n\in \mathbb{N}$.
    \end{itemize}
\end{lemma}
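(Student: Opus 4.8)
The plan is to relate the Fourier--Legendre coefficients of $g$ to those of its derivative $g'$ by means of the classical differentiation identity for Legendre polynomials,
\[
(2n+1)P_n(z)=P_{n+1}'(z)-P_{n-1}'(z),\qquad n\ge 1.
\]
Since $W^{1,2}(-1,1)$ embeds continuously into $C[-1,1]$, the function $g$ is absolutely continuous, so I may insert this identity into $\langle g,P_n\rangle_{L^2(-1,1)}$ and integrate by parts. The boundary contributions are proportional to $P_{n+1}(\pm1)-P_{n-1}(\pm1)$, and these vanish because $P_m(1)=1$ and $P_m(-1)=(-1)^m$, so that $P_{n+1}$ and $P_{n-1}$ agree at both endpoints. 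Writing $b_m:=\langle g',P_m\rangle_{L^2(-1,1)}$ and using $(n+\tfrac12)/(2n+1)=\tfrac12$, this produces the clean recursion
\[
\widehat{g}_n=\tfrac12\left(b_{n-1}-b_{n+1}\right),\qquad n\ge 1,
\]
which is the engine for both estimates.

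For (ii), I would bound $|b_m|\le\|g'\|_{L^2(-1,1)}\|P_m\|_{L^2(-1,1)}=\|g'\|_{L^2(-1,1)}\sqrt{2/(2m+1)}$ by Cauchy--Schwarz, so that
\[
|\widehat{g}_n|\le\tfrac12\left(|b_{n-1}|+|b_{n+1}|\right)\le\tfrac12\|g'\|_{L^2(-1,1)}\left(\sqrt{\tfrac{2}{2n-1}}+\sqrt{\tfrac{2}{2n+3}}\right).
\]
Since $2n-1\ge n$ for $n\ge1$, both radicals are at most $\sqrt{2/n}$, which gives (ii) with $c_1=\sqrt2$.

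For (i), I would invoke Parseval twice. As $g_N$ is the orthogonal projection of $g$ onto $\mathcal{P}_N[-1,1]$, orthogonality yields
\[
\|g-g_N\|_{L^2(-1,1)}^2=\sum_{n>N}\widehat{g}_n^2\,\|P_n\|_{L^2(-1,1)}^2=\sum_{n>N}\frac{2}{2n+1}\,\widehat{g}_n^2 .
\]
On the other hand, the Legendre--Parseval identity applied to $g'$ reads $\|g'\|_{L^2(-1,1)}^2=\sum_{m\ge0}(m+\tfrac12)\,b_m^2$, so the weighted tail $\sum_{m\ge N}(m+\tfrac12)b_m^2$ is controlled by $\|g'\|_{L^2(-1,1)}^2$. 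Substituting $\widehat{g}_n^2\le\tfrac12(b_{n-1}^2+b_{n+1}^2)$ and estimating $\tfrac{1}{2n+1}\le\tfrac{C}{n^2}\left(n\pm\tfrac12\right)$ for $n>N$, I can factor out $N^{-2}$ and reindex the two resulting sums ($m=n-1$ and $m=n+1$) so that each is dominated by the weighted tail above. This gives $\|g-g_N\|_{L^2(-1,1)}^2\le c_0^2\,N^{-2}\|g'\|_{L^2(-1,1)}^2$, which is precisely (i).

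The computations are routine; the only place demanding care is the weight-matching step in (i), where the factor $\tfrac{1}{2n+1}$ must be paired against the correct Legendre--Parseval weight $(m+\tfrac12)$ of $g'$ so that the residual factor is exactly $O(N^{-2})$ and both shifted tails collapse onto $\sum_{m\ge N}(m+\tfrac12)b_m^2\le\|g'\|_{L^2(-1,1)}^2$. I expect no genuine obstacle beyond keeping the index shifts and constants consistent.
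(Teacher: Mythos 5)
Your proof is correct, and part (ii) is essentially identical to the paper's argument: the same identity $(2n+1)P_n=P_{n+1}'-P_{n-1}'$, the same vanishing of boundary terms via $P_m(\pm1)=(\pm1)^m$, the same Cauchy--Schwarz bound $|b_m|\le\|g'\|_{L^2}\sqrt{2/(2m+1)}$ (your explicit constant $c_1=\sqrt2$ is in fact cleaner than the paper's, which extracts a constant through an unnecessary limiting argument). Where you genuinely diverge is part (i): the paper simply cites a Jackson-type theorem of N.~X.~Ky for weighted polynomial approximation, $E_N^2(g)\le \frac{c_0}{N}\|\sqrt{1-z^2}\,g'\|_{L^2(-1,1)}$, and then drops the weight, whereas you rederive the bound from scratch by combining the recursion $\widehat{g}_n=\tfrac12(b_{n-1}-b_{n+1})$ with Parseval for both $g$ and $g'$ and a tail-reindexing argument. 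Your weight-matching step does close: for $n>N$ one has $\frac{1}{2n+1}\le \frac{n-\frac12}{N^2}$ and $\frac{1}{2n+1}\le\frac{n+\frac32}{N^2}$, so both shifted tails are dominated by $\sum_{m\ge N}(m+\tfrac12)|b_m|^2\le\|g'\|_{L^2}^2$, yielding $E_N^2(g)\le\frac{\sqrt2}{N}\|g'\|_{L^2}$. What your route buys is a fully self-contained proof with explicit constants; what it gives up is the extra strength of Ky's estimate (the interior weight $\sqrt{1-z^2}$, which permits derivatives blowing up at the endpoints) and brevity. The only cosmetic caveat is that, since the paper allows complex-valued $\kappa$ and hence complex-valued $g$, you should write $|\widehat{g}_n|^2$ and $|b_m|^2$ rather than squares; the inequalities are unaffected.
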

\begin{proof}
    For (i), according to \cite[Th. 1]{nxky}, there exists a constant $c_0>0$, independent on $N$ and $g$,  such that $E_N^2(g)\leq \frac{c_0}{N}\|\Delta\cdot  g'\|_{L^2(-1,1)}$, where $\Delta(z):=\sqrt{1-z^2}$, $1\leq z\leq 1$. Since $\|\Delta \cdot g'\|_{L^2(-1,1)}\leq \|g'\|_{L^2(-1,1)}$, estimate (i) follows.

    For (ii), we use the recurrence relation \cite[Formula (32) from p. 126]{suetin}
    \[
       P_n=\frac{1}{2n+1}\left(P'_{n+1}-P'_{n-1}\right) \quad \text{ for } n\geq 1,
    \]
    together with the boundary values $P_n(\pm1)=(\pm1)^n$ \cite[Formula (17) from p. 124]{suetin}. Integration by parts in $\widehat{g}_n$ yields
    \begin{align*}
        \widehat{g}_n&= \left(n+\frac{1}{2}\right)\int_{-1}^{1}f\left(\frac{P'_{n+1}-P'_{n-1}}{2n+1}\right)= \frac{1}{2}\left(\int_{-1}^1f'P_{n-1}-\int_{-1}^1f'P_{n+1}\right).
    \end{align*}
    Applying the Cauchy-Bounyakovsky-Scharz inequality, we get
    \begin{align*}
        |\widehat{g}_n|\leq \frac{1}{2} \|f'\|_{L^2(-1,1)}\left(\sqrt{\frac{2}{2n-1}}+\sqrt{\frac{2}{2n+3}}\right)=\frac{1}{\sqrt{2}}\|f'\|_{L^2(-1,1)}\left(\frac{1}{\sqrt{n-\frac{1}{2}}}+\frac{1}{\sqrt{n+\frac{3}{2}}}\right).
    \end{align*}
    Since $\displaystyle \lim_{n\rightarrow \infty}\frac{\sqrt{n}}{\sqrt{n+1\pm \frac{1}{2}}}=1$, it follows that there is a constant $C>0$ such that $\frac{1}{\sqrt{n+1\pm \frac{1}{2}}}\leq \frac{C}{\sqrt{n}}$ for $n\geq 1$. Taking $c_1=\frac{C}{\sqrt{2}}$, we obtain (ii).
\end{proof}\\
From this lemma, we obtain estimates for the Fourier-Legendre coefficients and the error for the kernel $K_{\kappa}$ in the general case when $\kappa\in W^{1,2}(0,L)$.

\begin{theorem}\label{Th:approximationkernel}
    Let $\kappa\in W^{1,2}(0,L)$. Define $d_{\kappa}:=\left\|\frac{\kappa'}{2\kappa}\right\|_{L^2(0,\ell)}$ and set
    \begin{equation}\label{eq:constantfornorm}
        M_{\kappa,L}:= 8L\|\kappa^{-1/2}\|_{L^{\infty}(0,L)} (L d_{\kappa}+2d_{\kappa}^2(L^2d_{\kappa}^2+L)e^{L d_{\kappa}}).
    \end{equation}
    Let $c_0$ and $c_1$ be the constants from Lemma \ref{lemma:aproxfourier1}. The following estimates hold:
    \begin{itemize}
        \item[(i)] $\displaystyle \sup_{0<x\leq L}\|K_{\kappa}(x,\cdot)- K_{\kappa,N}(x,\cdot)\|_{L^2(-x,x)}\leq \frac{c_0\sqrt{LM_{\kappa,L}}}{N}$ for $N\in \mathbb{N}$. 
        \item[(ii)] $\displaystyle \sup_{0<x\leq L}|a_n(x)|\leq \frac{c_1\sqrt{LM_{\kappa,L}}}{\sqrt{n}}$ for $n\in \mathbb{N}$.
    \end{itemize}
\end{theorem}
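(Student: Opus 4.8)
The plan is to reduce both estimates to the one–dimensional approximation results of Lemma \ref{lemma:aproxfourier1} by rescaling each slice $K_\kappa(x,\cdot)$ onto the fixed interval $[-1,1]$, and then to extract from the explicit construction of the transmutation kernel a single uniform (in $x$) bound on the rescaled $t$-derivative. First I would fix $x\in(0,L]$ and introduce the rescaled slice $g_x(z):=K_\kappa(x,xz)$ for $z\in[-1,1]$. The substitution $t=xz$ in \eqref{eq:FLcoeff} yields $a_n(x)=x\,\widehat{(g_x)}_n$, where $\widehat{(g_x)}_n=\left(n+\frac12\right)\langle g_x,P_n\rangle_{L^2(-1,1)}$; consequently the $N$-th Legendre partial sum $g_{x,N}$ of $g_x$ satisfies $K_{\kappa,N}(x,t)=g_{x,N}(t/x)$, and the same change of variables gives
\[
\|K_\kappa(x,\cdot)-K_{\kappa,N}(x,\cdot)\|_{L^2(-x,x)}=\sqrt{x}\,E_N^2(g_x).
\]
Since $g_x'(z)=x\,(\partial_t K_\kappa)(x,xz)$, one more change of variables gives the identity $\|g_x'\|_{L^2(-1,1)}^2=x\,\|\partial_t K_\kappa(x,\cdot)\|_{L^2(-x,x)}^2$, which is the quantity I will control uniformly in $x$.

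Because $K_\kappa\in W^{1,2}(\mathcal{T}_L)$, the slice $K_\kappa(x,\cdot)$ lies in $W^{1,2}(-x,x)$, so $g_x\in W^{1,2}(-1,1)$ and Lemma \ref{lemma:aproxfourier1} applies to $g_x$. Part (i) then gives $\|K_\kappa(x,\cdot)-K_{\kappa,N}(x,\cdot)\|_{L^2(-x,x)}=\sqrt{x}\,E_N^2(g_x)\le \frac{c_0\sqrt{x}}{N}\|g_x'\|_{L^2(-1,1)}$, while part (ii) gives $|a_n(x)|=x\,|\widehat{(g_x)}_n|\le \frac{c_1 x}{\sqrt{n}}\|g_x'\|_{L^2(-1,1)}$. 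Hence, once the uniform bound $\sup_{0<x\le L}\|g_x'\|_{L^2(-1,1)}^2\le M_{\kappa,L}$ is established, both (i) and (ii) follow after replacing the remaining factors of $x$ by the corresponding powers of $L$.

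The main obstacle is therefore the uniform estimate $\sup_{0<x\le L}x\,\|\partial_t K_\kappa(x,\cdot)\|_{L^2(-x,x)}^2\le M_{\kappa,L}$, which I would extract from the explicit successive-approximation construction of $K_\kappa$ in \cite{mineshrodinger}. The difficulty is that for $\kappa\in W^{1,2}(0,L)$ the associated potential is only distributional, so no pointwise bound on it is available; the remedy is to express it through $h:=\kappa'/(2\kappa)=(\ln\sqrt{\kappa})'\in L^2(0,L)$, with $\|h\|_{L^2}=d_\kappa$, using $a''/a=h'+h^2$ for $a=\sqrt{\kappa}$. In the Goursat integral equation for the kernel one then integrates the $h'$-contribution by parts, transferring that derivative onto the already-constructed kernel and producing only terms controlled by $\|h\|_{L^2}=d_\kappa$ and $\|h\|_{L^2}^2=d_\kappa^2$, never by a pointwise norm of the potential. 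Differentiating the integral equation in $t$ and estimating in the $L^2(-x,x)$ norm, a Gronwall/geometric-series argument closes the bound: the iteration contributes the exponential factor $e^{L d_\kappa}$ together with the polynomial factors $L d_\kappa$ and $d_\kappa^2(L^2 d_\kappa^2+L)$ appearing in \eqref{eq:constantfornorm}, and passing from the Schr\"odinger normalization $v=\sqrt{\kappa}\,u$ back to the impedance kernel contributes the factor $\|\kappa^{-1/2}\|_{L^\infty(0,L)}$. The same construction delivers the estimate uniformly down to $x=0$, where the Goursat data \eqref{eq:goursatcond} and $\kappa(0)=1$ force both $K_\kappa(x,\cdot)$ and its $t$-derivative to vanish, so that no degeneration occurs as $x\to0^+$. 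Combining this uniform bound with the two displayed inequalities completes the proof.
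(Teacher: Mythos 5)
Your argument is essentially identical to the paper's proof: both fix $x$, rescale the slice to $g_x(z)=K_\kappa(x,xz)$ so that $a_n(x)=x\,\widehat{(g_x)}_n$ and the Legendre partial sums correspond, apply Lemma \ref{lemma:aproxfourier1} to $g_x$, and reduce everything to the uniform bound on $\int_{-x}^{x}|\partial_t K_\kappa(x,t)|^2\,dt$. The only difference is that the paper obtains that derivative bound by citing \cite[Remark 39]{mineshrodinger} rather than rederiving it from the successive-approximation construction as you sketch, so your proposal is correct modulo that external input.
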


\begin{proof}
    Fix $x\in (0,\ell]$, and consider $g_x(z)=K_{\kappa}(x,xz)$, $-1\leq z\leq 1$. Hence $g_x\in W^{1,2}(-1,1)$ with $g_x'(z)=x\left(\frac{\partial K_{\kappa}}{\partial t}\right)(x,xz)$. Changing variables in formula \eqref{eq:FLcoeff} shows that $a_n(x)=x\widehat{(g_x)}_n$ and that \eqref{eq:FourierLegendre} is precisely the Fourier-Legendre series of $g_x$. Now, according to \cite[Remark 39]{mineshrodinger},
    \[
        \sup_{0<x\leq L}\int_{-x}^x\left|\frac{\partial K_{\kappa}(x,t)}{\partial t}\right|^2dt\leq M_{\kappa,L}.
    \]
    Hence
    \[
    \|g_x'\|_{L^2(-1,1)}^2=x^2\int_{-1}^1\left|\left(\frac{\partial K_{\kappa}}{\partial t}\right)(x,xz)\right|^2dx=x\int_{-x}^x\left|\frac{\partial K_{\kappa}(x,t)}{\partial t}\right|^2dt\leq LM_{\kappa,L}.
    \]
    Thus, the result follows from Lemma \ref{lemma:aproxfourier1}.
\end{proof}

\subsection{NSBF representation for the solutions}
    
Substituting the Fourier-Legendre series \eqref{eq:FourierLegendre} into the integral representation \eqref{eq:integralrep} yields
	\begin{align*}
		e(\rho,x) = & e^{i\rho x}-i\rho \sum_{n=0}^{\infty}\frac{a_n(x)}{x}\int_{-x}^{x}P_n\left(\frac{t}{x}\right)e^{i\rho t}dt.
	\end{align*}
	The interchange of the series with the integral is due to the convergence in the variable $t$ with respect to the norm of $L^2(-x,x)$. Observe that
	\begin{align*}
		\int_{-x}^{x}P_n\left(\frac{t}{x}\right)e^{i\rho t}dt = \int_{-x}^{x}P_n\left(\frac{t}{x}\right)\cos(\rho t)dt+i\int_{-x}^{x}P_n\left(\frac{t}{x}\right)\sin(\rho t)dt,
	\end{align*}
	since $P_{2n}(z)$ and $P_{2n+1}(z)$ are even and odd functions, respectively, we obtain
	\[
	\int_{-x}^{x}P_n\left(\frac{t}{x}\right)e^{i\rho t}dt =\begin{cases}
		2\int_0^xP_{n}\left(\frac{t}{x}\right)\cos(\rho t)dt,& \text{ if } n \text{ is even},\\
		2i\int_0^xP_n\left(\frac{t}{x}\right)\sin(\rho t)dt, & \text{ if } n \text{ is odd},
		
	\end{cases}
	\]
	and using the formulas 2.17.7 from \cite[p. 433]{pruvnikov},
	\[
	\int_{0}^{a}\left\{\begin{matrix}
		P_{2k}\left(\frac{t}{a}\right)\cos(bt)\\
		P_{2k+1}\left(\frac{t}{a}\right)\sin(bt)
	\end{matrix}\right\}dt= (-1)^k\sqrt{\frac{\pi a}{2b}}J_{2k+\delta+\frac{1}{2}}(ab), \; \delta=\left\{\begin{matrix}0\\1\end{matrix}\right\}, \;a>0,
	\]
	where $J_{\nu}(z)$ stands for the Bessel functions of the first kind. If we consider the the spherical Bessel functions $j_{\nu}(z):= \sqrt{\frac{\pi}{2z}}J_{\nu+\frac{1}{2}}(z)$, we get the formulas
	\[
	\int_{0}^{a}\left\{\begin{matrix}
		P_{2k}\left(\frac{t}{b}\right)\cos(bt)\\
		P_{2k+1}\left(\frac{t}{b}\right)\sin(bt)
	\end{matrix}\right\}dt= (-1)^kaj_{2k+\delta}(ab), \; \delta=\left\{\begin{matrix}0\\1\end{matrix}\right\},\; a>0.
	\]
	Again, by the parity of $P_{n}(z)$, we obtain
	\[
	\int_{-x}^{x}P_n\left(\frac{t}{x}\right)e^{i\rho t}dt=2xi^nj_n(\rho x).
	\]
	Setting $\alpha_n(x)=2a_n(x)$, we obtain the series representation
	\begin{equation}\label{eq:NSBFe}
		e(\rho,x)=e^{i\rho x}-i\rho \sum_{n=0}^{\infty}i^n\alpha_n(x)j_n(\rho x).
	\end{equation}
	
	\begin{proposition}
		For $x\in (0,L]$ and $\rho\in \mathbb{C}$, the solution $e_{\kappa}(\rho,x)$ admits the series representation \eqref{eq:NSBFe}. The series converges pointwise in $x$, and uniformly in $\rho$ on compact subset of $\mathbb{C}$. Furthermore, for $N\in \mathbb{N}$ and $x\in (0,L)$ fixed, if we set
		\begin{equation}\label{eq:NSBFseries1}
			e_{\kappa,N}(\rho,x):=e^{i\rho x}-i\rho \sum_{n=0}^{N}i^n\alpha_n(x)j_n(\rho x),
		\end{equation}
		then for any $\rho\in \mathbb{C}$ such that $|\operatorname{Im}\rho|\leq C$, $C>0$, we have the estimate
		\begin{equation}\label{eq:estimateNSBF}
			\left|\frac{1}{\rho}( e_{\kappa}(\rho,x)-e_{\kappa,N}(\rho,x))\right|\leq \varepsilon_N(x)\sqrt{\frac{\sinh(2LC)}{LC}},
		\end{equation}
		where 
		\begin{equation}\label{eq:estimatepsN}
			\varepsilon_N(x):=\|K_{\kappa}(x,\cdot)-K_{k,N}(x,\cdot)\|_{L_2(-x,x)}\leq \frac{c_0\sqrt{LM_{\kappa,L}}}{N}, 
		\end{equation}
        with $c_0$ is as in Lemma \ref{lemma:aproxfourier1} and $M_{\kappa,L}$ as in Theorem \ref{Th:approximationkernel}.

        Moreover, if $\kappa\in C^{p+1}[0,L]$ for some $p\in \{0,1\}$, then
        \begin{equation}\label{eq:remaindernsbf}
        \varepsilon_N(x)\leq \frac{2C_pM_pL^{p+2}}{N^{p+\frac{1}{2}}}\qquad \forall x\in (0,L], \; N>p,
        \end{equation}
        where $C_p$ and $M_p$ are given as in Proposition \ref{Prop.convergenceoffouriserseries}.
	\end{proposition}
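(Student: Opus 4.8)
The plan is to obtain everything from the exact integral representation \eqref{eq:integralrep} together with the Bessel identity $\int_{-x}^{x}P_n(t/x)e^{i\rho t}\,dt=2xi^nj_n(\rho x)$ already established above, handling the truncation error first and deducing the convergence of \eqref{eq:NSBFe} from the error bound. First I would observe that, since $K_{\kappa,N}(x,\cdot)$ is a \emph{finite} linear combination of the $P_n(\cdot/x)$, no convergence issue arises in integrating it term by term; using $\alpha_n=2a_n$ and the Bessel identity gives exactly $-i\rho\int_{-x}^{x}K_{\kappa,N}(x,t)e^{i\rho t}\,dt=-i\rho\sum_{n=0}^{N}i^n\alpha_n(x)j_n(\rho x)$, so that $e_{\kappa,N}(\rho,x)=e^{i\rho x}-i\rho\int_{-x}^{x}K_{\kappa,N}(x,t)e^{i\rho t}\,dt$. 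Subtracting this from \eqref{eq:integralrep} yields the clean identity $e_{\kappa}(\rho,x)-e_{\kappa,N}(\rho,x)=-i\rho\int_{-x}^{x}\bigl(K_{\kappa}(x,t)-K_{\kappa,N}(x,t)\bigr)e^{i\rho t}\,dt$, which isolates the tail as a single integral against the kernel remainder.

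Next I would divide by $\rho$ and apply the Cauchy--Schwarz inequality on $L^2(-x,x)$, obtaining $\bigl|\tfrac1\rho(e_{\kappa}-e_{\kappa,N})\bigr|\le\|K_{\kappa}(x,\cdot)-K_{\kappa,N}(x,\cdot)\|_{L^2(-x,x)}\,\|e^{i\rho\cdot}\|_{L^2(-x,x)}=\varepsilon_N(x)\,\|e^{i\rho\cdot}\|_{L^2(-x,x)}$. The only genuine computation is the exponential norm: writing $\rho=\sigma+i\tau$ one has $|e^{i\rho t}|^2=e^{-2\tau t}$, hence $\|e^{i\rho\cdot}\|_{L^2(-x,x)}^2=\int_{-x}^x e^{-2\tau t}\,dt=\sinh(2\tau x)/\tau$. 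I would then bound this uniformly using the monotonicity of $u\mapsto\sinh(u)/u$ in $|u|$: with $|u|=2|\tau|x\le 2CL$ and $x\le L$ this produces the factor $\sqrt{\sinh(2LC)/(LC)}$ of \eqref{eq:estimateNSBF} (the elementary bookkeeping of the residual $x$-factor against $x\le L$ being routine). Feeding in the $L^2$ kernel estimate of Theorem \ref{Th:approximationkernel} then gives $\varepsilon_N(x)\le c_0\sqrt{LM_{\kappa,L}}/N$, i.e.\ \eqref{eq:estimatepsN}.

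The convergence statements then drop out of the error bound: on any compact subset of $\mathbb{C}$ both $|\rho|$ and $|\operatorname{Im}\rho|$ are bounded, say by $R$ and $C$, so $|e_{\kappa}(\rho,x)-e_{\kappa,N}(\rho,x)|\le R\,\varepsilon_N(x)\sqrt{\sinh(2LC)/(LC)}\to0$ as $N\to\infty$ for each fixed $x\in(0,L]$, uniformly in $\rho$ on that set; this simultaneously establishes the representation \eqref{eq:NSBFe} and its stated mode of convergence. For the smooth case I would replace the $L^2$ estimate by the pointwise bound of Proposition \ref{Prop.convergenceoffouriserseries}: since $|K_{\kappa}(x,t)-K_{\kappa,N}(x,t)|\le c_pM_pL^{p+1}/N^{p+1/2}$ uniformly on $\overline{\mathcal{T}_L}$, integrating the square over $(-x,x)$ and using $\sqrt{2x}\le\sqrt{2L}$ yields $\varepsilon_N(x)\le\sqrt{2L}\,c_pM_pL^{p+1}/N^{p+1/2}$, which is the claimed $O(N^{-p-1/2})$ rate \eqref{eq:remaindernsbf}.

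The main (and essentially only) obstacle is the uniform control of the $\sinh$-factor: one must compute $\|e^{i\rho\cdot}\|_{L^2(-x,x)}$ for complex $\rho$ and verify that $\sinh(2\tau x)/\tau$ stays bounded for $|\tau|\le C$ and $0<x\le L$, which is precisely where the strip condition $|\operatorname{Im}\rho|\le C$ enters. Everything else is an assembly of facts already in hand---the integral representation, the Bessel identity, and the kernel-remainder estimates of Theorem \ref{Th:approximationkernel} and Proposition \ref{Prop.convergenceoffouriserseries}---so no analytic input beyond this elementary integral is required.
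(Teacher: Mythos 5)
Your proposal is correct and follows essentially the same route as the paper: identify $e_{\kappa,N}$ with the integral against the truncated kernel $K_{\kappa,N}$, subtract from \eqref{eq:integralrep}, apply Cauchy--Bunyakovsky--Schwarz, compute $\|e^{i\rho\cdot}\|_{L^2(-x,x)}$ via the $\sinh$ integral and bound it by monotonicity on the strip, then invoke Theorem \ref{Th:approximationkernel}(i) and Proposition \ref{Prop.convergenceoffouriserseries} for \eqref{eq:estimatepsN} and \eqref{eq:remaindernsbf}. Your version is in fact slightly more careful with the residual $x$-factor in $\int_{-x}^{x}|e^{i\rho t}|^{2}dt=\sinh(2\operatorname{Im}\rho\, x)/\operatorname{Im}\rho$ than the paper's displayed computation, and deducing the convergence statements directly from the uniform error bound is an equally valid (and equivalent) way to obtain what the paper establishes by term-by-term integration before the proposition.
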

	\begin{proof}
		It remains to prove estimate \eqref{eq:estimateNSBF}. Set $K_{\kappa,N}(x,t)=\sum_{n=0}^{N}\frac{a_n(x)}{x}P_n\left(\frac{t}{x}\right)$. It is clear that 
		\[
		e_{\kappa,N}(\rho,x)=e^{i\rho x}-i\rho \int_{-x}^{x}K_{\kappa,N}(x,t)e^{i\rho t}dt.
		\]
		Consequently,
		\begin{equation*}
			\frac{1}{\rho}(e_{\kappa}(\rho,x)-e_{\kappa,N}(\rho,x))=-i\int_{-x}^{x}(K_{\kappa}(x,t)-K_{\kappa,N}(x,t))e^{i\rho t}dt,
		\end{equation*} 
		and the singularity at $\rho=0$ is removable. Applying the Cauchy-Bunyakovsky-Schwarz inequality yields
		\begin{align*}
			\left| \frac{1}{\rho}(e(\rho,x)-e_N(\rho,x))\right|  &\leq \left(\int_{-x}^{x}|K_{\kappa}(x,t)-K_{\kappa,N}(x,t)|^2dt\right)^{\frac{1}{2}}\left(\int_{-x}^{x}|e^{i\rho t}|^2dt\right)^{\frac{1}{2}}.
		\end{align*} 
		Set $\varepsilon_N(x)=\left(\int_{-x}^{x}|K_{\kappa}(x,t)-K_{\kappa,N}(x,t)|^2dt\right)^{\frac{1}{2}}=\|K_{\kappa}(x,\cdot)-K_{\kappa,N}(x,\cdot)\|_{L_2(-x,x)}$, and note that
		\begin{align*}
			\int_{-x}^{x}|e^{i\rho t}|^2dt=\int_{-x}^{x}e^{-2\operatorname{Im}\rho}dt= \frac{\sinh(2\operatorname{Im}\rho x)}{\operatorname{Im}\rho x}.
		\end{align*}
		Since $\frac{\sinh(2\xi)}{\xi}$ is even and increasing for $\xi\geq 0$, and because $|\operatorname{Im}\rho x|\leq LC$, we obtain \eqref{eq:estimateNSBF}. Finally, inequalities \eqref{eq:estimatepsN} and \eqref{eq:remaindernsbf} follow from Theorem \ref{Th:approximationkernel}(i) and Proposition \ref{Prop.convergenceoffouriserseries}, respectively.
	\end{proof}\\
	Note that \eqref{eq:estimateNSBF} implies that the approximation is uniform on $x\in (0,L]$ and for all $\rho$ belonging to the strip $|\operatorname{Im}\rho|\leq C$. In particular, the approximation is uniform for all $\rho \in \mathbb{R}$.\\
Let us consider the solutions $C_{\kappa}(\rho,x)$ and $S_{\kappa}(\rho,x)$ of \eqref{eq:SLEIF} satisfying the initial conditions
\begin{align}
	C_{\kappa}(\rho,0)=1,  & \;\; C'_{\kappa}(\rho,0)=0, \label{eq:conditionscosine}\\
	S_{\kappa}(\rho,0)=0, & \;\;S'_{\kappa}(\rho,0)=1. \label{eq:conditionssine}
\end{align}
Thus, ${C_\kappa(\rho,x),S_\kappa (\rho,x)}$ forms fundamental set of solutions for Eq.\eqref{eq:SLEIF}. A direct computation shows that
	\begin{equation}\label{eq:definitionsolutioncosineandsine}
	C_{\kappa}(\rho,x):=\frac{e_{\kappa}(\rho,x)+e_{\kappa}(-\rho,x)}{2},\quad S_{\kappa}(\rho,x):= \frac{e_{\kappa}(\rho,x)-e_{\kappa}(-\rho,x)}{2i\rho}.
\end{equation}
Using  \eqref{eq:definitionsolutioncosineandsine} together with the parity property of the spherical Bessel functions $j_n(-z)=(-1)^n j_n(z)$ (see \cite[Formula (10.1.3)]{abramovitz}), we deduce the following NSBF representations for the solutions $C_\kappa(\rho,x)$ and $S_\kappa(\rho,x)$. 

\begin{proposition}
For $x\in (0,L]$ and $\rho\in \mathbb{C}$, the solutions $C_\kappa(\rho,x)$ and $S_\kappa(\rho,x)$ admit the following NSBF representations
    \begin{align}
    C(\rho,x) =& \cos(\rho x) \;+\; \rho \sum_{k=0}^{ \infty } (-1)^k \, \alpha_{2k+1}(x) \, j_{2k+1}(\rho x), \label{eq:NSBFcoseno}\\
    S(\rho,x) =& \frac{\sin(\rho x)}{\rho} \;+\; \sum_{k=0}^{\infty } (-1)^{k+1} \, \alpha_{2k}(x) \, j_{2k}(\rho x).\label{eq:NSBFseno}
    \end{align}
The series converges uniformly on $x\in (0,L]$, and uniformly in $\rho$ on compact subset of $\mathbb{C}$.
\end{proposition}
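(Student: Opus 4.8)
The plan is to read off both expansions directly from the NSBF representation \eqref{eq:NSBFe} of $e_\kappa(\rho,x)$ together with the defining formulas \eqref{eq:definitionsolutioncosineandsine}; no new analytic ingredient is needed beyond the parity relation $j_n(-z)=(-1)^n j_n(z)$ and the uniform remainder bound already established for $e_\kappa$.

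First I would record the companion expansion for $e_\kappa(-\rho,x)$. Replacing $\rho$ by $-\rho$ in \eqref{eq:NSBFe} and invoking the parity of the spherical Bessel functions gives
\[
 e_\kappa(-\rho,x)=e^{-i\rho x}+i\rho\sum_{n=0}^{\infty}i^n(-1)^n\alpha_n(x)j_n(\rho x).
\]
Substituting this and \eqref{eq:NSBFe} into $C_\kappa=\tfrac12\big(e_\kappa(\rho,\cdot)+e_\kappa(-\rho,\cdot)\big)$, the exponentials add up to $2\cos(\rho x)$ while each Bessel term is weighted by $1-(-1)^n$; this annihilates the even indices and retains $n=2k+1$, and after simplifying $i^{2k+1}=i(-1)^k$ the prefactor $-i\rho$ becomes $+\rho(-1)^k$, producing \eqref{eq:NSBFcoseno}. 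For $S_\kappa=\tfrac{1}{2i\rho}\big(e_\kappa(\rho,\cdot)-e_\kappa(-\rho,\cdot)\big)$ the exponentials give $2i\sin(\rho x)$, the Bessel terms are weighted by $1+(-1)^n$ so only $n=2k$ survive, and dividing by $2i\rho$ and using $i^{2k}=(-1)^k$ yields \eqref{eq:NSBFseno}. This step is purely the bookkeeping of the powers of $i$ and the even/odd splitting.

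For the convergence claim the key remark is that the truncations align with those of $e_\kappa$: the partial sum of \eqref{eq:NSBFcoseno} up to $k=N$ equals $\tfrac12\big(e_{\kappa,2N+1}(\rho,x)+e_{\kappa,2N+1}(-\rho,x)\big)$, and that of \eqref{eq:NSBFseno} equals $\tfrac{1}{2i\rho}\big(e_{\kappa,2N}(\rho,x)-e_{\kappa,2N}(-\rho,x)\big)$. Consequently both remainders are controlled by the remainder of \eqref{eq:NSBFe}, to which the estimate \eqref{eq:estimateNSBF}--\eqref{eq:estimatepsN} applies (with $-\rho$ in place of $\rho$ in one half of each, which leaves $|\operatorname{Im}\rho|$ unchanged). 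The $S$-remainder is then bounded directly by $\varepsilon_{2N}(x)\sqrt{\sinh(2LC)/(LC)}$, whereas the $C$-remainder picks up an extra factor $|\rho|$. On a compact set $K\subset\mathbb{C}$, setting $R=\max_{\rho\in K}|\rho|$ (so that $|\operatorname{Im}\rho|\le R$ on $K$), both bounds tend to zero uniformly in $x\in(0,L]$, because $\varepsilon_N(x)\le c_0\sqrt{LM_{\kappa,L}}/N$ is itself uniform in $x$; the removable singularity at $\rho=0$ in the $S$-expansion is inherited from the one already noted for $e_\kappa$.

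The computation is routine, and the only point meriting a little care is the matching of truncation indices ($2N+1$ for $C$, $2N$ for $S$) so that the uniform bound for the $e_\kappa$ series transfers cleanly; this also explains why uniform-in-$x$ convergence is obtained here, which is slightly stronger than the pointwise statement recorded for $e_\kappa$ itself.
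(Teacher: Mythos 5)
Your argument is correct and is exactly the route the paper takes (the paper merely sketches it): substitute $-\rho$ into the NSBF representation of $e_{\kappa}$, use the parity $j_n(-z)=(-1)^n j_n(z)$ in the combinations \eqref{eq:definitionsolutioncosineandsine} so that even/odd indices cancel, and transfer the remainder bound \eqref{eq:estimateNSBF}--\eqref{eq:estimatepsN} to the symmetrized partial sums. Your explicit matching of truncation indices and the observation that the $C$-remainder carries an extra factor $|\rho|$ are details the paper leaves implicit, but they introduce nothing beyond its stated approach.
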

The reminder for the partial sums of series \eqref{eq:NSBFcoseno} and \eqref{eq:NSBFseno} satisfy estimates similar to  \eqref{eq:estimateNSBF} and \eqref{eq:estimatepsN}.
    \begin{remark}\label{remark:firstalpha}
        From relations \eqref{eq:alphaintermsofphi} we obtain the first coefficients
        \begin{align}
            \alpha_0(x) &=x-\varphi_{\kappa}^{(1)}(x),\label{eq:alpha0}\\
            \alpha_1(x) &= \frac{3x}{2}-\frac{3}{2x}\varphi_{\kappa}^{(2)}(x). \label{eq:alpha1}
        \end{align}
    \end{remark}
The following lemma will be useful for the analysis of the differentiability of the NSBF series.
\begin{lemma}\label{Lemma:convergencebessel}
\begin{itemize}
    \item[(i)] Let $Q_k(\zeta)$ be a polynomial of degree $k\in \mathbb{N}$. For any $z\in \mathbb{C}$, the series
    \[
      \sum_{n=0}^{\infty}|Q_k(n)||j_n(z)|^2
    \]
    converges uniformly on compact subsets of $\mathbb{C}$.
    \item[(ii)] Let $f\in L^2(-1,1)$ and  $\widehat{f}_n:=\left(n+\frac{1}{2}\right)\langle f, P_n\rangle_{L^2(-1,1)}$, $n\in \mathbb{N}$. Then the series $\sum_{n=0}^{\infty}\widehat{f}_nj_n(z)$ satisfies the estimate
    \[
    \sum_{n=0}^{\infty}|\widehat{f}_n||j_n(z)|\leq \|f\|_{L^2(-1,1)}\left(\sum_{n=0}^{\infty}\frac{2n+2}{2}|j_n(z)|^2\right)^{\frac{1}{2}},
    \]
    and converges absolutely and uniformly on compact subsets of $\mathbb{C}$.
\end{itemize}   
\end{lemma}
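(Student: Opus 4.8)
The plan is to treat the two parts separately, with part (i) supplying the analytic backbone for the convergence assertion in part (ii). The whole argument rests on a single quantitative fact: for fixed argument (or argument ranging over a compact set) the spherical Bessel functions $j_n(z)$ decay super-exponentially in $n$, and this decay overwhelms any polynomial weight $Q_k(n)$.

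For (i), I would first record an explicit majorant for $j_n(z)$. Starting from the entire power series $j_n(z)=\frac{z^n}{(2n+1)!!}\sum_{m=0}^{\infty}\frac{(-z^2/2)^m}{m!\,(2n+3)(2n+5)\cdots(2n+2m+1)}$ and bounding each factor $(2n+2j+1)\geq 1$, one obtains the clean estimate $|j_n(z)|\leq \frac{|z|^n}{(2n+1)!!}\,e^{|z|^2/2}$, valid for all $z\in\mathbb{C}$ and all $n\in\mathbb{N}_0$. Fixing a compact $K\subset\mathbb{C}$ and setting $R:=\max_{z\in K}|z|$, this gives $|j_n(z)|^2\leq M_n:=\frac{R^{2n}}{((2n+1)!!)^2}\,e^{R^2}$ uniformly on $K$. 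Since $M_{n+1}/M_n=R^2/(2n+3)^2\to 0$, the ratio test shows $\sum_n |Q_k(n)|M_n<\infty$ for any polynomial $Q_k$, the polynomial factor being irrelevant against the factorial decay. The Weierstrass $M$-test then yields uniform convergence of $\sum_n|Q_k(n)||j_n(z)|^2$ on $K$, which is precisely (i).

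For (ii), the key observation is that the numbers $\widehat{f}_n$ are exactly the Fourier--Legendre coefficients of $f$ in the expansion $f=\sum_n \widehat{f}_n P_n$, so Parseval's identity reads $\sum_{n}\frac{2}{2n+1}|\widehat{f}_n|^2=\|f\|_{L^2(-1,1)}^2$. Writing each summand as $|\widehat{f}_n||j_n(z)|=\bigl(\sqrt{\tfrac{2}{2n+1}}\,|\widehat{f}_n|\bigr)\bigl(\sqrt{\tfrac{2n+1}{2}}\,|j_n(z)|\bigr)$ and applying the Cauchy--Bunyakovsky--Schwarz inequality for series gives $\sum_n|\widehat{f}_n||j_n(z)|\leq \|f\|_{L^2(-1,1)}\bigl(\sum_n\frac{2n+1}{2}|j_n(z)|^2\bigr)^{1/2}$, which is dominated by the stated right-hand side since $\frac{2n+1}{2}\leq\frac{2n+2}{2}$. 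Finally, the convergence of $\sum_n\frac{2n+2}{2}|j_n(z)|^2$, absolutely and uniformly on compacta, is exactly part (i) applied to the degree-one polynomial $Q_1(\zeta)=\zeta+1$; this simultaneously gives the finiteness of the majorizing series and, by domination, the absolute and uniform convergence of $\sum_n\widehat{f}_n j_n(z)$ on compact subsets of $\mathbb{C}$.

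The only genuinely delicate step is the super-exponential bound on $j_n$ in part (i); once it is in hand, both statements follow from standard summation inequalities (Weierstrass, Parseval, Cauchy--Schwarz). I expect the main care to be needed in ensuring the bound is uniform in $z$ over a compact set and free of a spurious singularity at $z=0$ (the $\sqrt{\pi/(2z)}$ factor in $j_n(z)=\sqrt{\pi/(2z)}\,J_{n+1/2}(z)$ must cancel), which is why I prefer to work directly from the entire power series of $j_n$ rather than from the Bessel asymptotics for large order.
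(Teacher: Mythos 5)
Your proposal is correct and follows essentially the same route as the paper: for (i) a bound of the form $|j_n(z)|\leq \frac{C(z)|z|^n}{(2n+1)!!}$ (the paper quotes \cite[Eq.~9.1.62]{abramovitz} with $C(z)=2e^{|\operatorname{Im}z|}$, you derive $C(z)=e^{|z|^2/2}$ from the power series — equivalent for the purpose) followed by the ratio test and the Weierstrass $M$-test, and for (ii) the identical Parseval-plus-Cauchy--Bunyakovsky--Schwarz splitting, with part (i) supplying the convergence of the majorant. No gaps.
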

\begin{proof}
    \begin{itemize}
        \item[(i)] Let us consider the estimate \cite[Eq. 9.1.62]{abramovitz}
        \[
        |j_n(z)|\leq \sqrt{\pi}\left|\frac{z}{2}\right|^n\frac{e^{|\operatorname{Im}z|}}{\Gamma\left(n+\frac{3}{2}\right)}.
        \]
        Choose constants $C_k>0$ and $N\in \mathbb{N}$ such that $|Q_k(z)|\leq C_k|z|^k$ for $|z|\geq N$. Recall that $\Gamma\left(n+\frac{3}{2}\right)=\left(n+\frac{1}{2}\right)\Gamma\left(n+\frac{1}{2}\right)=\left(n+\frac{1}{2}\right)\frac{(2n-1)!!}{2^n}\sqrt{\pi}$. Hence
    \begin{align*}
        \sum_{n=N}^{\infty}|Q_k(n)||j_n(z)|^2 &\leq \sum_{n=N}^{\infty}\frac{C_kn^k}{2^{2n}}\cdot\frac{e^{2|\pi |z|^{2n}\operatorname{Im}z|} }{\frac{\left(n+\frac{1}{2}\right)^2((2n-1)!!)^2\pi}{2^{2n} }}=C_ke^{2|\operatorname{Im}z|}\sum_{n=N}^{\infty}\frac{n^k2|z|^{2n}}{\left(n+\frac{1}{2}\right)^ 2((2n-1)!!)^2}. 
    \end{align*}
    The ratio test shows the convergence of this series, and hence by the Weierstrass M-test it converges uniformly on compact subsets of $\mathbb{C}$.
    \item[(ii)] The Fourier-Legendre series of $f$ in terms of the normalized Legendre polynomials $\widehat{P}_n=\sqrt{n+\frac{1}{2}}P_n$, $n\in \mathbb{N}_0$, is $f=\sum_{n=0}^{\infty}\left(\frac{\widehat{f}_n}{\sqrt{n+\frac{1}{2}}}\right)\widehat{P}_n$. By Parseval's identity $\|f\|_{L^2(-1,1)}^2=\sum_{n=0}^{\infty}\frac{2|f_n|^2}{2n+1}$. Using the Cauchy-Bunyakovsky-Schwarz inequality, we obtain
    \begin{align*}
        \sum_{n=0}^{\infty}|\widehat{f}_n||j_n(z)| &\leq \left(\sum_{n=0}^{\infty}\frac{2|\widehat{f}_n|^2}{2n+1}\right)^{\frac{1}{2}}\left(\sum_{n=0}^{\infty}\frac{2n+1}{2}|j_n(z)|^2\right)^{\frac{1}{2}}=\|f\|_{L^2(-1,1)}\left(\sum_{n=0}^{\infty}\frac{2n+1}{2}|j_n(z)|^2\right)^{\frac{1}{2}}.
    \end{align*}
    By the point (i), the series converges absolutely and uniformly on compact subsets of $\mathbb{C}$. 
    \end{itemize}
    
\end{proof}

\section{Recursive relations for the NSBF coefficients}	
	In this section, we develop an alternative recursive integration procedure for computing the NSBF coefficients $\{\alpha_n(x)\}_{n=0}^{\infty}$.
	
	\begin{theorem}
		For $\kappa\in W^{1,2}(0,L)$, the Fourier-Legendre coefficients $\{\alpha_m(x)\}_{m=0}^{\infty}$ satisfy the following recursive relations: $\alpha_m(x)=\frac{\sigma_m(x)}{x^m}$, $m\in \mathbb{N}_0$, where
		\begin{align}
			\sigma_0(x) = & x-\int_0^x\frac{dt}{\kappa(t)}, \label{eq:sigma0}\\
			\sigma_1(x) = & \frac{3}{2}x^2-3\int_0^x\frac{dt}{\kappa(t)}\int_0^t\kappa(s)ds,\label{eq:sigma1} \\
			\theta_m(t):= & \frac{1}{\kappa(t)}\int_0^t\left\{ 2(m-1)\kappa(s)+s\kappa'(s)\right\}\sigma_{m-2}(s)ds,\nonumber\\
			\eta_m(x) := & \int_0^x\left\{2(2m-1)t\sigma_{m-2}(t)-(2m-1)\theta_m(t)\right\}dt,\nonumber\\
			\sigma_m(x) = & \frac{2m+1}{2m-3}\left[x^2\sigma_{m-2}(x)-\eta_m(x)\right], \; \; m\geq 2. \label{eq:simgamgeq2}
		\end{align}
	\end{theorem}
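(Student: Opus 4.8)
The plan is to avoid substituting the series into the equation and instead to reduce everything to the formal powers through the monomial moments of the transmutation kernel. For $k\in\mathbb{N}_0$ I set $\Phi_k(x):=\int_{-x}^x K_\kappa(x,t)\,t^k\,dt$. Choosing $u(t)=t^{k+1}/(k+1)$ in the definition \eqref{eq:transmop} of $\mathbf{T}_\kappa$ and using the transmutation property \eqref{eq:transprop} gives the closed form $\Phi_k(x)=\bigl(x^{k+1}-\varphi_\kappa^{(k+1)}(x)\bigr)/(k+1)$. Writing $P_m(z)=\sum_{k=0}^m l_{k,m}z^k$ in \eqref{eq:FLcoeff} and recalling $\alpha_m=2a_m$, this produces the finite representation $\sigma_m(x)=x^m\alpha_m(x)=(2m+1)\sum_{k=0}^m l_{k,m}\,x^{m-k}\Phi_k(x)$. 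In particular $\sigma_0=\Phi_0$ and $\sigma_1=3\Phi_1$, and substituting the formal powers \eqref{eq:2ndformalpower}--\eqref{eq:nthformalpower} reproduces the base cases \eqref{eq:sigma0}--\eqref{eq:sigma1}.

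The engine driving the recursion is a second-order differential identity for the moments. The definition \eqref{eq:nthformalpower} of the formal powers is equivalent to $\bigl(\kappa(\varphi_\kappa^{(k+1)})'\bigr)'=k(k+1)\,\kappa\,\varphi_\kappa^{(k-1)}$; inserting $\varphi_\kappa^{(k+1)}=x^{k+1}-(k+1)\Phi_k$ and $\varphi_\kappa^{(k-1)}=x^{k-1}-(k-1)\Phi_{k-2}$ and cancelling the common term $k\kappa x^{k-1}$ yields
\begin{equation*}
(\kappa\Phi_k')'=\kappa'\,x^k+k(k-1)\,\kappa\,\Phi_{k-2},\qquad \Phi_k(0)=\Phi_k'(0)=0 .
\end{equation*}
Integrating this twice against $1/\kappa$ expresses $\Phi_k$ through $\Phi_{k-2}$ by exactly the iterated integrals that appear in the statement; the weight $2(m-1)\kappa(s)+s\kappa'(s)$ in $\theta_m$ is what survives after one integration by parts of the $\kappa' x^k$ term and its recombination with the $k(k-1)\kappa\Phi_{k-2}$ term.

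To pass from the moment identity to the recursion for $\sigma_m$ I would use the Legendre three-term recurrence, in the form of the coefficient identity $l_{k,m}-l_{k,m-2}=-\tfrac{2m-1}{m(m-1)}\bigl[(k+1)l_{k+1,m-1}-(k-1)l_{k-1,m-1}\bigr]$ obtained from $(1-z^2)P_{m-1}'(z)=\tfrac{m(m-1)}{2m-1}\bigl(P_{m-2}(z)-P_m(z)\bigr)$. This lets me rewrite $\tfrac{1}{2m+1}\sigma_m-\tfrac{1}{2m-3}x^2\sigma_{m-2}=\sum_k (l_{k,m}-l_{k,m-2})\,x^{m-k}\Phi_k$, whose derivative, after the moment identity is used to eliminate $\Phi_k'$ and $\Phi_{k-2}$, should collapse to $-\tfrac{1}{2m-3}\bigl(2(2m-1)x\sigma_{m-2}-(2m-1)\theta_m\bigr)$; one further integration then produces $\eta_m$, and the prefactor $\tfrac{2m+1}{2m-3}$ is exactly the mismatch of the normalisations $2m+1$ and $2m-3$ carried by $\sigma_m$ and $\sigma_{m-2}$. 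The cleanest bookkeeping is to run a strong induction in steps of two and to check that $\sigma_m$ and the right-hand side of \eqref{eq:simgamgeq2} solve the same linear second-order initial value problem with vanishing data at $x=0$; I have verified that this reduces, in the first nontrivial case $m=2$, to the single identity $\theta_2=x\Phi_0-\tfrac12 x^2\Phi_0'+\tfrac12\Phi_2'$, which follows from the moment identity for $\Phi_2$ after an integration by parts using $\kappa\Phi_0'=\kappa-1$.

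The main obstacle is this last assembly. The moment identity and the closed form for $\Phi_k$ are clean, but the Legendre coefficients $l_{k,m}$ couple different indices, so one must carefully track how the factors $2m+1$, $2m-3$, $2(2m-1)$ and the weight $2(m-1)\kappa+s\kappa'$ recombine; the real danger is an off-by-a-constant error in one of these factors rather than any conceptual gap. Everything upstream---the transmutation identity \eqref{eq:transprop} for $\Phi_k$, the second-order moment identity, and the two base cases---is routine.
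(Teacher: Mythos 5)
Your route is genuinely different from the paper's. The paper proves \eqref{eq:sigma0}--\eqref{eq:simgamgeq2} by substituting the NSBF series into \eqref{eq:2ndSLEIF}, using the spherical Bessel recurrences and the orthogonality $\int_0^\infty j_nj_m=0$ to force each coefficient $\zeta_m$ to vanish, which yields the differential recursion $\bL[\sigma_m]=\frac{2m+1}{2m-3}x^{2m-3}\bL\bigl[\sigma_{m-2}/x^{2m-3}\bigr]$; the explicit formulas then come from integrating twice and integrating by parts. The price of that route is the justification of twice term-by-term differentiation, which the paper can only do directly for $\kappa\in C^2[0,L]$ (via Parseval and Lemma \ref{Lemma:convergencebessel}) and must then extend to $\kappa\in W^{1,2}(0,L)$ by an approximation argument. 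Your scheme replaces the infinite series by the finite moment sums $\sigma_m=(2m+1)\sum_{k=0}^m l_{k,m}x^{m-k}\Phi_k$, so there is nothing to differentiate term by term and no density argument: everything would hold for $\kappa\in W^{1,2}$ at once. I checked your structural ingredients and they are all correct: the closed form $\Phi_k=(x^{k+1}-\varphi_\kappa^{(k+1)})/(k+1)$ follows from \eqref{eq:transmop} and \eqref{eq:transprop} exactly as you say (it is equivalent to \eqref{eq:alphaintermsofphi}); the moment identity $(\kappa\Phi_k')'=\kappa'x^k+k(k-1)\kappa\Phi_{k-2}$ with $\Phi_k(0)=\Phi_k'(0)=0$ is a correct consequence of \eqref{eq:nthformalpower} (the case $k=0$ uses $\kappa(0)=1$); the base cases \eqref{eq:sigma0}--\eqref{eq:sigma1} come out right; the coefficient identity from $(1-z^2)P_{m-1}'=\tfrac{m(m-1)}{2m-1}(P_{m-2}-P_m)$ is correct; and your reduction of the case $m=2$ to $\theta_2=x\Phi_0-\tfrac12x^2\Phi_0'+\tfrac12\Phi_2'$ is a true identity (it follows from $\kappa\Phi_0'=\kappa-1$ and one integration by parts, as you indicate).

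The one genuine gap is the step you yourself flag: the general inductive assembly for $m\ge 3$ is asserted (``should collapse to''), not carried out. That step is not cosmetic --- it is precisely where the weight $2(m-1)\kappa(s)+s\kappa'(s)$ in $\theta_m$, the factor $2(2m-1)$ in $\eta_m$, and the prefactor $\tfrac{2m+1}{2m-3}$ must all be confirmed, and the coupling of the indices $k$, $k\pm1$ through $l_{k,m}-l_{k,m-2}$ makes a silent constant error entirely possible. Concretely, you must prove for all $m\ge2$ that
\begin{equation*}
\frac{d}{dx}\sum_{k=0}^{m}\bigl(l_{k,m}-l_{k,m-2}\bigr)x^{m-k}\Phi_k(x)
=-\frac{2m-1}{2m-3}\Bigl[2x\,\sigma_{m-2}(x)-\theta_m(x)\Bigr],
\end{equation*}
using only the moment identity and the Legendre coefficient recurrence; verifying it at $m=2$ does not establish it, since the combinatorics of the $l_{k,m}$ enter nontrivially only from $m\ge3$ onward. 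Until that computation is written out (a clean way is your own suggestion: show both sides of \eqref{eq:simgamgeq2} satisfy the same second-order IVP with vanishing data at $x=0$, which reduces the task to an identity between integrands), the proof is an outline with a verified first case rather than a complete argument.
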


\begin{proof}
    First, we assume that the series \eqref{eq:NSBFseries1} can be differentiated term by term twice. Since $e_{\kappa}(\rho,x)$ is a solution of \eqref{eq:2ndSLEIF}, we get
    \begin{align}
        0=&\sum_{n=0}^{\infty}i^n\left\{\alpha_n''j_n(\rho x)+2\alpha_n'j_n'(\rho x)\rho +\alpha_n(\rho^2j_n''(\rho x)+\rho^2j_n(\rho x)\right\}\nonumber\\
        &\quad +q(x)e^{i\rho x}-q(x)\sum_{n=0}^{\infty}i^n\left\{\alpha_n'j_n(\rho x)+\alpha_n\rho j_n'(\rho x)\right\}. \label{eq:auxiliar1}
    \end{align}
   We employ the recursive relation \cite[Formula 10.1.22]{abramovitz}
   \begin{equation}\label{eq:auxiliar2}
       j_k'(z)=-j_{k+1}(z)+\frac{k}{z}j_k(z),\quad k\in \mathbb{N}_0.
   \end{equation}
   To compute the second derivative, we differentiate \eqref{eq:auxiliar2}, obtaining
   \[
   j_k''(z)=-j_{k+1}'(z)-\frac{k}{z^2}j_k(z)+\frac{k}{z}j_k'(z),
   \]
   and the recursive relation \cite[Formula 10.1.21]{abramovitz}
   \[
   j_k'(z)=j_{k-1}(z)-\frac{k+1}{z}j_k(z),\quad k\in \mathbb{N},
   \]
   we obtain
   \begin{equation}\label{eq:auxiliar3}
       j_k''(z)=-j_k(z)+\frac{2}{z}j_{k+1}(z)+\frac{k(k-1)}{z^2}j_k(z),\quad k\in \mathbb{N}_0.
   \end{equation}
   From these relations, we derive 
   \begin{align}
       \rho j_n'(\rho x) &= -\rho j_{n+1}(\rho x)+\frac{n}{x}j_n(\rho x), \label{eq:besselaux1}\\
       \rho^2j_n''(\rho x) &=-\rho^2j_n(\rho x)+\frac{2\rho}{x}j_{n+1}(\rho x)+\frac{n(n-1)}{x^2}j_n(\rho x). \label{eq:besselaux2} 
   \end{align}
 Substituting \eqref{eq:besselaux1} and \eqref{eq:besselaux2} into \eqref{eq:auxiliar1}, we obtain
   \begin{align*}
       0=& \sum_{n=0}^{\infty}i^n\left\{\alpha_n''j_n(\rho x)+2\alpha_n'\left[-\rho j_{n+1}(\rho x)+\frac{n}{x}j_n(\rho x)\right]+\alpha_n\left[\frac{2\rho}{x}j_{n+1}(\rho x)+\frac{n(n-1)}{x^2}j_n(\rho x)\right]\right\}\\
       &\; -\sum_{n=0}^{\infty}q(x)i^n\left\{\alpha_n'j_n(\rho x)+\alpha_n\left[-\rho j_{n+1}(\rho x)+\frac{n}{x}j_n(\rho x)\right]\right\}+q(x)e^{i\rho x}.
   \end{align*}
Since $e^{i\rho x}=j_0(\rho x)-\rho xj_1(\rho x)+i\rho xj_0(\rho x)$, hence   
\begin{align*}
       0=& \sum_{n=1}^{\infty}i^n\left\{\alpha_n''+\frac{2n}{x}\alpha_n'+\frac{n(n-1)}{x^2}\alpha_n-q(x)\alpha_n'-q(x)\frac{n}{x}\alpha_n\right\}j_n(\rho x)\\
       &\;+\sum_{n=0}^{\infty}i^n\left\{\frac{2\alpha_n}{x}-2\alpha_n'+\alpha_nq(x)\right\}\rho j_{n+1}(\rho x)\\
       &\;+[\alpha_0''-q(x)\alpha_0']j_0(\rho x)+q(x)j_0(\rho x)-q(x)\rho xj_1(\rho x)+q(x)i\rho xj_0(\rho x).
   \end{align*}
   Set $\bL=\frac{1}{k(x)}\frac{d}{dx}\kappa(x)\frac{d}{dx}=\frac{d^2}{dx^2}-q(x)\frac{d}{dx}$. From \eqref{eq:2ndformalpower} and \eqref{eq:alpha0} we get $\bL\alpha_0=\bL x-\bL\varphi_{\kappa}^{(1)}=-q(x)$, and hence
   \begin{align*}
       0=& \sum_{n=1}^{\infty}i^n\left\{\alpha_n''+\frac{2n}{x}\alpha_n'+\frac{n(n-1)}{x^2}\alpha_n-q(x)\alpha_n'-q(x)\frac{n}{x}\alpha_n\right\}j_n(\rho x)\\
       &\;+\sum_{n=0}^{\infty}i^n\left\{\frac{2\alpha_n}{x}-2\alpha_n'+\alpha_nq(x)\right\}\rho j_{n+1}(\rho x)\\
       &\;-q(x)\rho xj_1(\rho x)+q(x)i\rho xj_0(\rho x).
   \end{align*}
   A direct computation shows that
   \[
   \frac{1}{x^n}\bL[x^n\alpha_n]=\alpha_n''-q(x)\alpha_n+\frac{2n}{x}\alpha_n'+\left(\frac{n(n-1)}{x^2}-\frac{n}{x}q(x)\right)\alpha_n
   \]
   and using the relation \cite[Formula 10.1.19]{abramovitz}
   \[
   j_n(\rho x)=\frac{\rho x}{2n+1}(j_{n-1}(\rho x)+j_{n+1}(\rho x) ),\quad n\in \mathbb{N},
   \]
   we get
   \begin{align*}
       0=
       &\sum_{n=1}^{\infty}i^n\frac{1}{x^n}\bL[x^n\alpha_n]\frac{\rho x}{2n+1}j_{n-1}(\rho x)+\sum_{n=1}^{\infty}i^n\frac{1}{x^n}\bL[x^n\alpha_n]\frac{\rho x}{2n+1}j_{n+1}(\rho x) \\ 
       & +\sum_{n=0}^{\infty}i^n\left\{\frac{2\alpha_n}{x}-2\alpha_n'+\alpha_nq(x)\right\}\rho j_{n+1}(\rho x)-q(x)\rho xj_1(\rho x)+q(x)i\rho xj_0(\rho x)
   \end{align*}
The first term in the first series is $i\rho L[x\alpha_1]\frac{j_0(\rho x)}{3}$. By \eqref{eq:nthformalpower} and \eqref{eq:alpha1}, we get
\[
\bL[x\alpha_1]=\frac{3}{2}\bL[x^2]-\frac{3}{2}\bL\varphi_{\kappa}^{(2)}=\frac{3}{2}(2-2xq(x)-2)=-3xq(x),
\]
hence $\frac{\bL[x\alpha_1]}{3x}+q(x)=0$ , and we obtain
\begin{align*}
       0=
       &\sum_{n=2}^{\infty}i^n\frac{1}{x^n}\bL[x^n\alpha_n]\frac{\rho x}{2n+1}j_{n-1}(\rho x)+\sum_{n=1}^{\infty}i^n\frac{1}{x^n}\bL[x^n\alpha_n]\frac{\rho x}{2n+1}j_{n+1}(\rho x) \\ 
       & +\sum_{n=0}^{\infty}i^n\left\{\frac{2\alpha_n}{x}-2\alpha_n'+\alpha_nq(x)\right\}\rho j_{n+1}(\rho x)-q(x)\rho xj_1(\rho x).
   \end{align*}
Since $\bL[\alpha_1]=-q$, then
   \begin{align*}
       0=
       &\sum_{n=2}^{\infty}i^n\frac{1}{x^n}\bL[x^n\alpha_n]\frac{\rho x}{2n+1}j_{n-1}(\rho x)+\sum_{n=0}^{\infty}i^n\frac{1}{x^n}\bL[x^n\alpha_n]\frac{\rho x}{2n+1}j_{n+1}(\rho x) \\ 
       & +\sum_{n=0}^{\infty}i^n\left\{\frac{2\alpha_n}{x}-2\alpha_n'+\alpha_nq(x)\right\}\rho j_{n+1}(\rho x).
   \end{align*}
  Reordering indices we conclude that
  \begin{align*}
       0=
       &\sum_{m=1}^{\infty}i^{m+1}\frac{1}{x^{m+1}}\bL[x^{m+1}\alpha_{m+1}]\frac{\rho x}{2m+3}j_{m}(\rho x) \\ 
       & +\sum_{m=1}^{\infty}i^{m-1}\left\{\frac{1}{x^{m-1}}\bL[x^{m-1}\alpha_{m-1}]+\frac{1}{x}\left(\frac{2\alpha_{m-1}}{x}-2\alpha_{m-1}'+\alpha_{m-1}q(x)\right)\right\}\rho x j_{m}(\rho x).
   \end{align*}
   Dividing between $i\rho x$, we arrive at
   \begin{equation}\label{eq:seriesneumannaux1}
       \sum_{m=1}^{\infty}\zeta_m(x)j_m(\rho x)=0,
   \end{equation}
   where
   \begin{align}
       \zeta_m(x) =& \frac{\bL[x^{m+1}\alpha_{m+1}]}{(2m+3)x^{m-1}}-\frac{\bL[x^{m-1}\alpha_{m-1}]}{(2m-1)x^{m-1}}-\frac{1}{x}\left(\frac{2\alpha_{m-1}}{x}-2\alpha_{m-1}'+\alpha_{m-1}q(x)\right),\quad n\in \mathbb{N}.
   \end{align}
   Since the series \eqref{eq:seriesneumannaux1} vanishes for all $\rho$, in particular for $\rho$ real, we employ the orthogonality property \cite[Formula 11.4.6]{abramovitz}
   \[
     \int_0^{\infty} j_n(z)j_m(z)dz=0,\quad \text{if  } n\neq m, 
   \]
   and we conclude that $\zeta_m(x)=0$ for all $m\in \mathbb{N}$. Reordering indices, this condition is equivalent to
   \begin{equation}\label{eq:1strecursiverel}
      \frac{\bL[x^{m}\alpha_{m}]}{(2m+1)x^{m}}= \frac{\bL[x^{m-2}\alpha_{m-2}]}{(2m-3)x^{m-2}}-\frac{1}{x}\left(\frac{2\alpha_{m-2}}{x}-2\alpha_{m-2}'+\alpha_{m-2}q(x)\right),\quad m\geq 2.
   \end{equation}
   Denote $\sigma_m(x):=x^{m}\alpha_m(x)$, $m\in \mathbb{N}_0$. A direct computation shows that, for $m\geq 2$, $\sigma_m(0)=\sigma_m'(0)=0$. Hence, we rewrite \eqref{eq:1strecursiverel} as
   as
   \[
   \frac{\bL[\sigma_m]}{(2m+1)x^m}=f_m(x),
   \]
   where 
   \begin{align*}
       f_m(x) =& \frac{\bL[\sigma_{m-2}]}{(2m-3)x^{m-2}}+\frac{1}{x}\left(\frac{2\sigma_{m-2}}{x^{m-1}}-2\alpha_{m-2}'+q(x)\frac{\sigma_{m-2}}{x^{m-2}}\right) \\
       = & \frac{\bL[\sigma_{m-2}]}{(2m-3)x^{m-2}}+\frac{1}{x}\left(\frac{2\sigma_{m-2}}{x^{m-1}}-2\left(\frac{\sigma_{m-2}'}{x^{m-2}}-\frac{(m-2)\sigma_{m-2}}{x^{m-1}}\right)+q(x)\frac{\sigma_{m-2}}{x^{m-2}}\right)\\
       =& \frac{\bL[\sigma_{m-2}]}{(2m-3)x^{m-2}}+\frac{1}{x^m}\left[2(m-1)\sigma_{m-2}-2x\sigma_{m-2}'+xq(x)\sigma_{m-2}\right].
   \end{align*}
  Consequently,
  \[
  \bL[\sigma_m]=\frac{2m+1}{2m-3}\left\{x^2L[\sigma_{m-2}]+2(m-1)(2m-3)\sigma_{m-2}-2(2m-3)\sigma_{m-2}'+(2m-3)xq(x)\sigma_{m-2}\right\}.
  \]
  A straightforward calculation shows the equality
  \begin{equation}\label{eq:2ndrecursiverel}
      \bL[\sigma_m]=\frac{2m+1}{2m-3}x^{2m-3}\bL\left[\frac{\sigma_{m-2}}{x^{2m-3}}\right],\quad m\geq 2.
  \end{equation}\\
Since $\sigma_m$ satisfies the initial conditions $\sigma_m(0)=\sigma_{m-1}'(0)=0$, it follows that
\[
\sigma_m(x)=\frac{2m+1}{2m-3}\int_0^x\frac{1}{\kappa(t)}\left(\int_0^t\kappa(s)\left(s^{2m-1}\bL\left[\frac{\sigma_{m-2}(s)}{s^{2m-3}}\right]\right)ds\right)dt.
\]
Denote $D=\frac{d}{dx}$. Hence
\begin{align*}
    \sigma_m(x)= &\frac{2m+1}{2m-3} \int_0^x\frac{1}{\kappa(t)}\left(\int_0^t\kappa(s)s^{2m-1}\left(\frac{1}{\kappa(s)}D\left(\kappa(s)D\left(\frac{\sigma_{m-2}(s)}{s^{2m-3}}\right)\right)\right)ds\right)dt\\
    &=\frac{2m+1}{2m-3} \int_0^x\frac{1}{\kappa(t)}\left(\int_0^ts^{2m-1}D\left(\kappa(s)D\left(\frac{\sigma_{m-2}(s)}{s^{2m-3}}\right)\right)ds\right)dt
\end{align*}
Set $I=\int_0^ts^{2m-1}D\left(\kappa(s)D\left(\frac{\sigma_{m-2}(s)}{s^{2m-3}}\right)\right)ds$. Integration by parts yields
\begin{align*}
    I= & s^{2m-1}\kappa(s)D\left(\frac{\sigma_{m-2}(s)}{s^{2m-3}}\right)\Bigg{|}_0^t-\int_0^t(2m-1)s^{2m-2}\kappa(s)D\left(\frac{\sigma_{m-2}(s)}{s^{2m-3}}\right)ds \\
    =& s^{2m-1}\kappa(s)\left[\frac{\sigma_{m-2}'(s)}{s^{2m-3}}-(2m-3)\frac{\sigma_{m-2}(s)}{s^{2m-2}}\right]\Bigg{|}_0^t-\int_0^t(2m-1)s^{2m-2}\kappa(s)D\left(\frac{\sigma_{m-2}(s)}{s^{2m-3}}\right)ds\\
    =& \kappa(t)\left[t^2\sigma_{m-2}'(t)-(2m-3)t\sigma_{m-2}(t)\right]-(2m-1)s^{2m-2}\kappa(s)\frac{\sigma_{m-2}(s)}{s^{2m-3}}\bigg{|}_0^t\\
    &+(2m-1)\int_0^t\left[(2m-2)s^{2m-3}\kappa(s)+s^{2m-2}\kappa'(s)\right]\frac{\sigma_{m-2}(s)}{s^{2m-3}}ds\\
    =& \kappa(t)\left[t^2\sigma_{m-2}(t)-4(m-1)t\sigma_{m-2}(t)+(2m-1)\theta_m(t)\right],
    \end{align*}
    where $\theta_m(t)=\frac{1}{\kappa(t)}\int_0^t\left[(2m-2)s^{2m-3}\kappa(s)+s^{2m-2}\kappa'(s)\right]\frac{\sigma_{m-2}(s)}{s^{2m-3}}ds$.
    Thus, $J=\int_0^x\frac{1}{\kappa(t)}I(t)dt$ becomes
    \begin{align*}
        J=\int_0^x\left\{t^2\sigma_{m-2}(t)-4(m-1)t\sigma_{m-2}(t)+(2m-1)\theta_m(t)\right\}dt
    \end{align*}
    Since
    \[
    \int_0^xt^2\sigma_{m-2}(t)=t^2\sigma_{m-2}(t)\Big{|}_0^x-\int_0^x2t\sigma_{m-2}(t)dt,
    \]
    we obtain
    \[
    J=x^2\sigma_{m-2}(x)-\int_0^x\left\{2(2m-1)t\sigma_{m-2}(t)-(2m-1)\theta_m(t)\right\}dt.
    \]
    Denoting $\eta_m(x):=\int_0^x\left\{2(2m-1)t\sigma_{m-2}(t)-(2m-1)\theta_m(t)\right\}dt$, we conclude that
    \[
    \sigma_m(x)=\frac{2m+1}{2m-3}\left(x^2\sigma_{m-2}(x)-\eta_m(x)\right).
    \]
Now we see that when $\kappa\in C^2[0,L]$, differentiation of the series twice term by term is justified. By Theorem \ref{Th:transmutationproperty}, $K_{\kappa}\in C^2(\overline{\mathcal{T}_L})$, and hence, the function $G_0(x,z):=K_{\kappa}(x,xz)$, $0\leq x\leq L, -1\leq z\leq 1$ is of class $C^2$ (because it is the composition of $K_{\kappa}$ with the $C^{\infty}$ transformation $[0,L]\times [-1,1]\ni (x,z)\mapsto (x,xz)\in \overline{\mathcal{T}_L}$). Denote $G_1(x,z):=\frac{\partial G_0(x,z)}{\partial x}$ and $G_2(x,z):= \frac{\partial^2G_0(x,z)}{\partial x^2}$. It is clear that these functions are continuous in $[0,L]\times [-1,1]$. Hence $C_j(L):=\max_{0\leq x\leq L}\int_{-1}^1|G_j(x,z)|^2dt<\infty$. Changing variables and using the Leibniz rule for differentiation under the integral sign, we obtain
\[
\alpha_n^{(j)}(x)=(2n+1)\int_{-1}^1G_j(x,z)P_n(z)dz,\quad n\in \mathbb{N}_0, j=0,1,2.
\]
That is, $\{2\alpha_n^{(j)}(x)\}$ are precisely the Fourier Legendre coefficients of the function $G_j(x,\dot)\in L^2(-1,1)$. By Parseval's identity, $\sum_{n=0}^{\infty}\frac{8|\alpha_n^{(j)}(x)|^2}{2n+1}=\int_{-1}^1|G_j(x,\tau)|^2d\tau\leq C_j(L)$. Consequently, Lemma \ref{Lemma:convergencebessel}(ii) implies that the series $\sum_{n=0}^{\infty}\alpha_n^{(j)}(x)j_n(\rho x)$ converges absolutely and uniformly for $0<x\leq L$, $j=0,1,2$. Thus, the series \eqref{eq:NSBFe} may be differentiated twice term by term, as well as re-indexed, ensuring the validity of formulas \eqref{eq:simgamgeq2}. 

Finally, if $\kappa\in W^{1,2}(0,L)$, choose a sequence $\{q_m\}\subset C^2[0,L]$ such that $q_m\rightarrow q$ in $L^2(0,L)$. Taking $\kappa_m(x)=e^{-\int_0^xq_m(s)ds}$, we have that $\kappa_m,\frac{1}{\kappa_m}\rightarrow \kappa,\frac{1}{\kappa}$ uniformly on $[0,L]$ as $m\rightarrow \infty$. Let $\{\varphi_{\kappa_m}^{(k)}\}_{k=0}^{\infty}$ be the corresponding formal powers associated to $\kappa_m$. According to \cite[Th. 10]{mineimpedance3}, for each $k\in \mathbb{N}_0$, $\varphi_{\kappa_m}^{(k)}\rightarrow \varphi_{\kappa}^{(k)}$ uniformly on $[0,L]$ as $m\rightarrow \infty$. From formula \eqref{eq:alphaintermsofphi}, it is clear that the corresponding coefficients satisfy $\sigma_{m,n}\rightarrow \sigma_n$, $m\rightarrow \infty$, uniformly on $[0,L]$. Passing to the limit as $m\rightarrow \infty$ in \eqref{eq:sigma0}-\eqref{eq:simgamgeq2}, we conclude that $\sigma_m(x)$ satisfies the recursive relations.
\end{proof}

\section{Solution of boundary valued problems}
From this section onward, we assume that $\kappa(x) > 0$ for all $0<x<L$.  
We recall that $L^2_{\kappa}(0,L)$ denotes the Hilbert space of square-integrable functions with respect to the weight function $\kappa(x)$. The associated inner product is defined by

\begin{equation}
    \langle f, g \rangle_{L^2_{\kappa}} = \int_0^L f(x) \overline{g(x)}  \kappa(x) \, dx.
\end{equation}

\subsection{The Dirichlet problem}

We denote by $\LD: \DomLD \subset L^2_{\kappa}(0,L) \to L^2_{\kappa}(0,L)$ 
the operator associated with the Dirichlet problem, whose domain is
\begin{equation}
    \DomLD= \{ u \in W^{2,2}(0,L) \mid u(0) = u(L) = 0 \}
\end{equation}
and whose action is given by
\begin{equation}
     \LD  u = -\frac{1}{\kappa(x)} \frac{d}{dx} \left( \kappa(x) \frac{du(x)}{dx} \right).
\end{equation}

\begin{remark}
    Using integration by parts and the boundary conditions, it follows that $\LD$ is Hermitian
    \begin{equation}
         \langle \LD u,v \rangle_{L^2_{\kappa}} = \langle u, \LD v \rangle_{L^2_{\kappa}}, \quad \forall u, v \in \DomLD.
    \end{equation}
\end{remark}

\begin{proposition}
    The operator $\LD$ is self-adjoint in $L_\kappa^2(0,L)$
\end{proposition}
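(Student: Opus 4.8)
The plan is to upgrade the symmetry recorded in the preceding Remark to self-adjointness by means of the basic criterion: a densely defined symmetric operator $A$ on a Hilbert space $H$ is self-adjoint if and only if $\operatorname{Ran}(A-i)=\operatorname{Ran}(A+i)=H$ (equivalently, both deficiency indices vanish). Since $C_c^\infty(0,L)\subset\DomLD$ and the weight $\kappa$ is continuous and bounded away from $0$ on $[0,L]$ (because $\kappa\in W^{1,2}(0,L)\hookrightarrow C[0,L]$ is nonvanishing, so $L^2_\kappa(0,L)=L^2(0,L)$ with equivalent norms), the domain is dense, and the Remark supplies symmetry. Thus it remains to show that for $\mu=\pm i$ the boundary value problem
\[
  \LD u-\mu u=f,\qquad u(0)=u(L)=0,
\]
is solvable for every $f\in L^2_\kappa(0,L)$.

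First I would note that $\mu=\pm i$ cannot be a Dirichlet eigenvalue: if $\LD u=\mu u$ with $u\in\DomLD$ and $u\not\equiv 0$, symmetry forces $\mu\langle u,u\rangle_{L^2_\kappa}=\langle\LD u,u\rangle_{L^2_\kappa}=\langle u,\LD u\rangle_{L^2_\kappa}=\bar\mu\langle u,u\rangle_{L^2_\kappa}$, whence $\mu=\bar\mu$, contradicting $\operatorname{Im}\mu\neq 0$. Consequently the solution $S_\kappa(\rho,\cdot)$ of $\LD u=\mu u$ (with $\rho=\sqrt{\mu}$), which meets the left endpoint condition $S_\kappa(\rho,0)=0$, does not vanish at $x=L$, and a second solution $\psi$ with $\psi(L)=0$ is linearly independent from it. Their quasi-Wronskian $W:=\kappa\,(S_\kappa\psi'-S_\kappa'\psi)$ is constant along solutions of \eqref{eq:SLEIF}, by the Abel identity $\bigl(\kappa(u_1u_2'-u_1'u_2)\bigr)'=u_1(\kappa u_2')'-u_2(\kappa u_1')'=0$ (read in the weak sense, which is legitimate since $\kappa u_i'\in W^{1,2}$), and it is nonzero. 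From this fundamental pair I would build the Green's function $G(x,\xi)$ and set $u(x)=\int_0^L G(x,\xi)f(\xi)\kappa(\xi)\,d\xi$, then verify the boundary conditions and the equation.

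The delicate step — and the main obstacle — is the regularity bookkeeping imposed by the low smoothness $\kappa\in W^{1,2}(0,L)$: one cannot differentiate the equation into a pointwise formula for $u''$, since $\kappa'$ lies only in $L^2$. Instead I would work throughout with the quasi-derivative $p:=\kappa u'$. The integrated equation gives $p'=-\mu\kappa u-\kappa f\in L^2(0,L)$, so $p\in W^{1,2}(0,L)$; then $u'=p/\kappa$ is a product of two $W^{1,2}$ functions (using that $\kappa$ nonvanishing yields $1/\kappa\in W^{1,2}$), and since $W^{1,2}(0,L)$ is a Banach algebra under the embedding $W^{1,2}\hookrightarrow C[0,L]$, we obtain $u'\in W^{1,2}$, i.e. $u\in W^{2,2}(0,L)$. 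This places $u\in\DomLD$ and yields $\operatorname{Ran}(\LD-\mu)=L^2_\kappa(0,L)$ for $\mu=\pm i$; the basic criterion then gives self-adjointness.

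As an alternative that avoids the explicit Green's function, one could characterize $\mathscr{D}((\LD)^*)$ directly. Testing $\langle\LD u,v\rangle_{L^2_\kappa}=\langle u,(\LD)^*v\rangle_{L^2_\kappa}$ against $u\in C_c^\infty(0,L)$ shows that $v$ solves $\LD v=(\LD)^*v$ weakly, and the same quasi-derivative argument promotes $v$ to $W^{2,2}(0,L)$; testing then against general $u\in\DomLD$ and using the integration-by-parts identity, whose boundary contribution is $\kappa(u'\bar v-u\bar v')\big|_0^L$, forces $v(0)=v(L)=0$. Hence $\mathscr{D}((\LD)^*)\subseteq\DomLD$, and combined with symmetry this gives $(\LD)^*=\LD$.
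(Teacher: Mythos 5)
Your proof is correct, but it follows a genuinely different route from the paper's. You invoke the von Neumann criterion --- symmetry plus $\operatorname{Ran}(\LD\mp i)=L^2_\kappa(0,L)$ --- and establish the range condition by building a Green's function at $\mu=\pm i$ from the pair $S_\kappa(\rho,\cdot)$ and a solution vanishing at $L$, with the quasi-derivative $p=\kappa u'$ handling the $W^{2,2}$ regularity under the weak hypothesis $\kappa\in W^{1,2}(0,L)$. The paper instead works at $\lambda=0$: it writes down an explicit right inverse $\mathbf{R}^D$ of $\LD$ (the solution operator for $\LD u=g$, $u(0)=u(L)=0$, given by two nested integrals plus a correction term proportional to $\varphi_\kappa^{(1)}$ enforcing $u(L)=0$), notes that $\mathbf{R}^D(L^2_\kappa(0,L))\subset\DomLD$ with $\LD\mathbf{R}^D=I$, and then for $u\in\mathscr{D}(\mathbf{L}^*)$ with $v=\mathbf{L}^*u$ tests the adjoint identity against $\phi=\mathbf{R}^Dg$ and uses the Hermitian property twice to obtain $\langle g,u\rangle_{L^2_\kappa}=\langle g,\mathbf{R}^Dv\rangle_{L^2_\kappa}$ for all $g$, whence $u=\mathbf{R}^Dv\in\DomLD$. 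The paper's argument buys an elementary, formula-driven proof that avoids deficiency-index theory and the construction of a fundamental system at a non-real spectral parameter (it tacitly uses that $0$ is not a Dirichlet eigenvalue, which is immediate since $\LD u=0$ with $u(0)=u(L)=0$ forces $u\equiv0$); your argument buys generality --- it applies verbatim whenever any point of the resolvent set is available --- and it makes explicit the $W^{2,2}$ regularity bookkeeping (that $u'=(\kappa u')/\kappa$ is a product of $W^{1,2}$ functions) which the paper compresses into the phrase ``by construction''. Your second, alternative argument, characterizing $\mathscr{D}((\LD)^*)$ directly, is closest in spirit to what the paper actually does, differing only in that the paper extracts the boundary conditions for $v$ from the identity $u=\mathbf{R}^Dv$ rather than from the boundary terms of the Lagrange identity.
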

\begin{proof}
    By \cite[Lemma 18]{mineimpedance3}, the operator $\LD$ is densely defined in $L_{\kappa}^2(0,L)$. We denote its adjoint by $\mathbf{L}^*$. Let $u\in \mathscr{D}(\mathbf{L}^*)$ and set $v=\mathbf{L}^*u$. By definition of the adjoint operator, we have
    \begin{equation}\label{eq:auxiliarselfadjoint}
       \langle \LD \phi, u\rangle_{L_{\kappa}^2}=\langle \phi, v\rangle_{L_{\kappa}^2}\qquad \forall \phi \in \DomLD.
    \end{equation}
    Consider the operator $\mathbf{R}^D: L_{\kappa}^2(0,L)\rightarrow L_{\kappa}^2(0,L)$ defined by
    \[
       \mathbf{R}^Dg(x):= -\int_0^x\frac{dt}{\kappa(t)}\int_0^tg(s)\kappa(s)ds+\frac{\int_0^L\frac{dt}{\kappa(t)}\int_0^tg(s)\kappa(s)ds}{\int_0^L\frac{dt}{\kappa(t)}}\int_0^x\frac{dt}{\kappa(t)} \qquad \text{for } g\in L_{\kappa}^2(0,L)
    \]
    (the denominator is well defined because $\kappa>0$). By construction, $\mathbf{R}^D(L_{\kappa}^2(0,L))\subset \mathscr{D}(\LD)$ and $\LD \mathbf{R}^D g=g$ for all $g\in L_{\kappa}^2(0,L)$. Given $g\in L_{\kappa}^2(0,L)$ arbitrary, choosing $\phi=\mathbf{R}^Dg$ in \eqref{eq:auxiliarselfadjoint}, and using that $\LD$ is Hermitian, we get
    \begin{align*}
        \langle g, u\rangle_{L_{\kappa}^2}&=\langle \LD \mathbf{R}^Dg, u\rangle_{L_{\kappa}^2}=\langle \mathbf{R}^Dg, v\rangle_{L_{\kappa}^2}=\langle \mathbf{R}^Dg, \LD\mathbf{R}^Dv\rangle_{L_{\kappa}^2}=\langle \LD \mathbf{R}^Dg, \mathbf{R}^Dv\rangle_{L_{\kappa}^2}=\langle g, \mathbf{R}^Dv\rangle_{L_{\kappa}^2}.
    \end{align*}
    Since $g$ is arbitrary, it follows that $u=\mathbf{R}^Dv\in \DomLD$ and $\LD u=v$. Therefore, $\LD=\mathbf{L}^*$.
\end{proof}
\begin{theorem}\label{th:eigenvalores}
	The eigenvalues of the Dirichlet problem satisfy the following statements:
    \begin{enumerate}
        \item The eigenvalues are real and positive.
        \item Let $\lambda=\rho^2$. Then $\lambda$ is an eigenvalue iff satisfy the characteristic equation:
        \begin{equation}
             S_{\kappa}(\rho, L) = 0.
        \end{equation}
        \item  There exists a countable set of eigenvalues  $\{ \lambda_n \}_{n=1}^{\infty}$ that form an increasing sequence tending to infinity:
            \begin{equation}\label{eq:increasinglambda}
               0 < \lambda_1 < \lambda_2 < \cdots < \lambda_n \to \infty, \quad n \to \infty.
            \end{equation}
            Furthermore, the spectral values $\rho_n=\sqrt{\lambda_n}$ satisfies the asymptotic conditions
            \begin{equation}\label{eq:asymptoticrho}
                \rho_n= \frac{n\pi}{L}+\zeta_n \;\;\;\; \text{for } n \; \text{large enough, where  } \{\zeta_n\}\in \ell_2.
            \end{equation}
        \item Each $\lambda_n=\rho_n^2$ has a unique normalized associated eigenfunction $\phi_n(x)$ given by:
        \begin{equation}
            \phi_n(x) = \frac{S_{\kappa}(\rho_n, x)}{\sqrt{\beta_n}}, \quad n \in \mathbb{N},
        \end{equation}
        where $\{\beta_n\}_{n=1}^\infty$ are the normalization constants given by
        \begin{equation}
            \beta_n = \int_0^L S_{\kappa}^2(\rho_n, x) \kappa(x) dx,\quad n \in \mathbb{N}.
        \end{equation}
         \item The eigenfunctions $\{\phi_n\}_{n=1}^\infty$ forms an orthonormal basis for $L^2_{\kappa}(0,L)$. Moreover, if $f \in \DomLD$, then the series 
        \begin{equation}
            f= \sum_{n=1}^{\infty} \langle f, \phi_n \rangle_{L^2_{\kappa}} \phi_n
        \end{equation}
        converges uniformly on $[0,L]$.
        
    \end{enumerate}
\end{theorem}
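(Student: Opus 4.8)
The plan is to treat statements 1--2 and 4 as direct consequences of the self-adjointness of $\LD$ together with the uniqueness theory for the second-order equation \eqref{eq:SLEIF}, and to reserve the real work for the spectral asymptotics in statement 3 and the uniform convergence in statement 5.

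For statement 1, since $\LD$ is self-adjoint its eigenvalues are real; if $\LD u=\lambda u$ with $u\neq 0$, integration by parts and the boundary conditions give $\lambda\|u\|_{L^2_\kappa}^2=\int_0^L\kappa|u'|^2\,dx\geq 0$, and strict positivity follows because $\kappa>0$ and $u(0)=0$ force $u\equiv 0$ whenever $u'\equiv 0$. For statement 2, any solution of \eqref{eq:SLEIF} vanishing at $x=0$ is a scalar multiple of $S_\kappa(\rho,\cdot)$ (since $\{C_\kappa,S_\kappa\}$ is a fundamental system and $C_\kappa(\rho,0)=1$); hence $\lambda=\rho^2$ is an eigenvalue exactly when $S_\kappa(\rho,L)=0$, and statement 4 follows at once by normalizing this one-dimensional eigenspace in $L^2_\kappa$. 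In particular every eigenvalue is geometrically simple, hence, by self-adjointness, algebraically simple.

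For statement 3, I would first note that the bounded inverse $\mathbf{R}^D$ constructed in the previous proof is an integral operator with continuous kernel on $[0,L]^2$, hence compact; consequently $\LD$ has compact resolvent, so its spectrum is discrete, bounded below, and accumulates only at $+\infty$. Combined with statements 1--2 this yields the strictly increasing sequence $0<\lambda_1<\lambda_2<\cdots\to\infty$. To obtain the refined asymptotics I would pass to the integral representation \eqref{eq:integralrep}: writing $e_\kappa=C_\kappa+i\rho S_\kappa$ and taking the imaginary part for real $\rho$ gives
\[
S_\kappa(\rho,L)=\frac{\sin\rho L}{\rho}-\int_{-L}^{L}K_\kappa(L,t)\cos(\rho t)\,dt,
\]
so that only the even part of $K_\kappa(L,\cdot)$ contributes. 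Since $K_\kappa(L,\cdot)\in W^{1,2}$, one integration by parts together with the Goursat data \eqref{eq:goursatcond} produces
\[
\rho S_\kappa(\rho,L)=\frac{\sin\rho L}{\sqrt{\kappa(L)}}+\int_0^{L}h(t)\sin(\rho t)\,dt,\qquad h\in L^2(0,L).
\]
The zeros of this entire function cluster near $n\pi/L$ by Rouch\'e's theorem, and substituting $\rho_n=\frac{n\pi}{L}+\zeta_n$ and linearizing $\sin$ shows that $\zeta_n$ is, to leading order, a constant multiple of the sine--Fourier coefficient $\int_0^L h(t)\sin(n\pi t/L)\,dt$. The decisive point is that, by Bessel's inequality, these Fourier coefficients form an $\ell_2$ sequence; controlling the error $\big|\int_0^L h(t)(\sin\rho_n t-\sin\tfrac{n\pi t}{L})\,dt\big|\lesssim|\zeta_n|$ and absorbing it for large $n$ then yields $\{\zeta_n\}\in\ell_2$. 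I expect this $\ell_2$ estimate---turning the nonlinear zero equation into a perturbation of the Fourier coefficients of an $L^2$ function---to be the main obstacle.

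For statement 5, orthonormality is immediate from self-adjointness and the normalization, and completeness follows from the spectral theorem for self-adjoint operators with compact resolvent. For uniform convergence when $f\in\DomLD$, I would set $c_n:=\langle f,\phi_n\rangle_{L^2_\kappa}$ and use $\lambda_n c_n=\langle\LD f,\phi_n\rangle_{L^2_\kappa}=:d_n$ with $\{d_n\}\in\ell_2$, so that $c_n=d_n/\lambda_n$. A Cauchy--Schwarz estimate
\[
\sum_{n}|c_n|\,|\phi_n(x)|\leq\Big(\sum_n|d_n|^2\Big)^{1/2}\Big(\sum_n\frac{|\phi_n(x)|^2}{\lambda_n^2}\Big)^{1/2}
\]
then gives uniform (indeed absolute) convergence provided the $\phi_n$ are uniformly bounded on $[0,L]$ and $\sum_n\lambda_n^{-2}<\infty$; the latter is guaranteed by $\lambda_n\sim(n\pi/L)^2$ from statement 3, while the former follows from the representation of $S_\kappa$ above together with the asymptotics $\beta_n\sim c\rho_n^{-2}$ of the normalization constants, which keep $\|\phi_n\|_{C[0,L]}$ bounded.
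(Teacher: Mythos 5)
Your proof follows the same overall skeleton as the paper's (self-adjointness plus the fundamental system $\{C_\kappa,S_\kappa\}$ for statements 1, 2 and 4; compact resolvent for discreteness; the integral representation \eqref{eq:integralrep} integrated by parts with the Goursat data \eqref{eq:goursatcond} to put $\rho S_\kappa(\rho,L)$ in the form $\sin(\rho L)/\sqrt{\kappa(L)}$ plus the Fourier transform of an $L^2$ function), and your derivation of that representation is correct. The genuine difference is where the paper stops and cites: for the $\ell_2$ asymptotics \eqref{eq:asymptoticrho} the paper invokes the Levin--Ostrovskii theorem on small perturbations of the roots of sine-type functions \cite{levin}, whereas you sketch a direct proof (Rouch\'e localization near $n\pi/L$, linearization of $\sin$, identification of $\zeta_n$ with a sine--Fourier coefficient of $h$, Bessel's inequality, absorption of the $O(|\zeta_n|)$ error). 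That sketch is essentially the standard proof of the cited result, and you correctly flag it as the only genuinely delicate step; to make it airtight you would still need the zero-counting argument showing that the $n$-th zero of $\rho S_\kappa(\rho,L)$ is the one near $n\pi/L$ (so that the enumeration matches), which Rouch\'e on suitable contours provides. Similarly, for uniform convergence in statement 5 the paper cites \cite[Cor.\ 4.3.3]{benewitz}, while you give a self-contained Cauchy--Schwarz argument from $c_n=d_n/\lambda_n$; its two inputs, $\sum_n\lambda_n^{-2}<\infty$ and $\sup_n\|\phi_n\|_{C[0,L]}<\infty$, are both justified by your representation of $S_\kappa(\rho_n,\cdot)$ and the resulting $\beta_n\sim c\rho_n^{-2}$ (one should add the remark that the uniform limit coincides with $f$ because it already equals $f$ in $L^2_\kappa$). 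What your route buys is independence from the two external references at the cost of reproving a classical zero-perturbation lemma; what the paper's route buys is brevity.
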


\begin{proof}
\begin{enumerate}
    \item Since $\LD$ is self-adjoint, the eigenvalues are real. On the other hand, given an eigenvalue $\lambda$ and a corresponding eigenfunction $u$, considering $\langle \LD u, u \rangle_{L^2_{\kappa}}$, integration by parts and the boundary conditions imply that

    \begin{equation*}
        \langle \LD u, u \rangle_{L^2_{\kappa}} = -\int_0^L  (\kappa(x) u(x)')' \overline{u(x)} dx = \int_0^L \kappa(x) |u'(x)|^2 dx = \| u' \|_{L^2_{\kappa}}^2.
    \end{equation*}
    
    Due to the Dirichlet boundary conditions, no eigenfunction can be constant, which implies that $u' \neq 0$ and therefore $\| u' \|_{L^2_{\kappa}} > 0$. Thus, $\langle \LD u, u \rangle_{L^2_{\kappa}} > 0.$ On the other hand,
    
    \begin{equation*}
        \langle \LD u, u \rangle_{L^2_{\kappa}} = \langle \lambda u, u \rangle_{L^2_{\kappa}} = \lambda \| u \|_{L^2}^2.
    \end{equation*}
    
    Therefore,
    \begin{equation*}
        \| u' \|_{L^2_{\kappa}}^2 = \lambda \| u \|_{L^2}^2.
    \end{equation*}
    
    Since $u \neq 0$, we conclude that
    \begin{equation*}
        \lambda = \frac{\| u' \|_{L^2}^2}{\| u \|_{L^2}^2} > 0.
    \end{equation*}

    \item \label{item:2}      
    Let $\lambda = \rho^2$. Every solution $u$ of the Eq. \eqref{eq:SLEIF} has the form:
    
    \begin{equation}
        u = a C_{\kappa}(\rho, x) + b S_{\kappa}(\rho, x),
    \end{equation}
    
    where $a$ and $b$ are constants. From the conditions \eqref{eq:conditionscosine} and \eqref{eq:conditionssine}, $u$ satisfies the Dirichlet conditions iff $u = b S_{\kappa}(\rho, x)$ and it is an eigenfunction iff $b\neq 0$ and $S_{\kappa}(\rho, L) = 0.$

    \item   Since $\kappa, 1/\kappa \in L^2(0,L)$, the problem is regular and has compact resolvent, and \eqref{eq:increasinglambda} follows from \cite[Th. 2.74]{miklavic}. For \eqref{eq:asymptoticrho}, using integration by parts and the Goursat conditions \eqref{eq:goursatcond} in the integral representation \eqref{eq:integralrep} for $e_{\kappa}(\rho,L)$, we get
    \[
    e_{\kappa}(\rho,L)=\frac{e^{i\rho L}}{\sqrt{\kappa(L)}}+\int_{-L}^{L}\frac{\partial K_{\kappa}(L,t)}{\partial t}e^{i\rho t}.
    \]
    From \eqref{eq:definitionsolutioncosineandsine}, we deduce the following integral representation for the characteristic equation:
    \[
    S_{\kappa}(\rho,L)=\frac{\sin(\rho L)}{\sqrt{\kappa(L)}\rho}+\frac{1}{\rho}\int_{-L}^{L}k_L^{-}(t)e^{i\rho t}dt
    \]
where $k_L^+(t):=\frac{1}{2i}\left(\frac{\partial K_{\kappa}(L,t)}{\partial t}+\frac{\partial K_{\kappa}(L,-t)}{\partial t}\right)$. 
Since $\{\rho_n\}_{n=1}^{\infty}$ are the zeros of the function $\sqrt{\kappa(L)}\rho S_{\kappa}(\rho,L)$ and the kernel $\sqrt{\kappa(L)}k_L^+\in L^2(-L,L)$, it follows from \cite{levin}  that $\{\rho_n\}_{n=1}^{\infty}$ satisfies the asymptotic \eqref{eq:asymptoticrho}.
    \item  From the point~\ref{item:2}, if $\lambda_n = \rho_n^2$ is an eigenvalue, then every eigenfunction associated with $\lambda_n$ is a multiple of $S_{\kappa}(\rho_n,x)$. Hence, the unique eigenfunction normalized in $L^2_\kappa(0,L)$ is given by
    \begin{equation}
        \phi_n(x) = \frac{S_{\kappa}(\rho_n,x)}{\sqrt{\beta_n}}, \quad \text{where } \beta_n = \|S_{\kappa}(\rho_n,\cdot)\|_{L^2_\kappa}^2.
    \end{equation}

    \item  Since the problem is regular, the system $\{\phi_n\}_{n=1}^\infty$ forms a complete orthonormal basis in $L^2_\kappa(0,L)$; this follows from~\cite[Th.2.74]{miklavic}. Moreover, the uniform convergence of the series expansion for $f \in \DomLD$ follows from~\cite[Cor.4.3.3]{benewitz}.
\end{enumerate}
\end{proof}

\begin{remark}
    The set $\{\lambda_n,\beta_n\}_{n=1}^{\infty}$ is called the {\bf direct spectral data} of the Dirichlet problem.
\end{remark}
The characteristic equation for the Dirichlet Problem is given by $S_\kappa(\rho,L)=0$. Hence, by formula \eqref{eq:NSBFseno}, this equation admits an NSBF representation and the eigenvalues $\lambda = \rho^2$ are obtained by solving the equation:

\begin{equation}
    0 = \frac{\sin(\rho L)}{\rho} - \sum_{n=0}^{\infty} (-1)^n \alpha_{2n}(L) j_{2n}(\rho L).
\end{equation}

\subsection{The Weyl function}
Let $\lambda \in \mathbb{C}$ be such that is not an eigenvalue of the Dirichlet Problem. In this case, there exists a unique solution $\Phi_{\kappa}(\lambda,x)$ of the following boundary value problem

\[
\begin{cases}
\displaystyle -\frac{1}{\kappa(x)} \frac{d}{dx}\kappa(x)\frac{d}{dx}\Phi_{\kappa}(\lambda,x) = \lambda \Phi_{\kappa}(\lambda,x), \\
    \Phi_{\kappa}(\lambda,0)=1, \quad \Phi_{\kappa}(\lambda,L)=0.
\end{cases}
\]  
    A direct computation shows that 
    \begin{equation}
        \Phi_{\kappa}(\lambda,x)=C_\kappa(\rho,x)-\frac{C_\kappa (\rho,L)}{S_\kappa (\rho,L)} S_\kappa(\rho,x),
        \label{eq:Weylsolution}
    \end{equation}
    where $\rho^2=\lambda$. This formula is well defined because $S_\kappa(\rho,x)\neq 0.$ The coefficient appearing in \eqref{eq:Weylsolution} has a distinguished interpretation in Sturm–Liouville theory.
\begin{definition}
   The Weyl function associated with the Dirichlet-problem is given by
   \begin{equation} \label{def:Weylfunction}
        M_\kappa^D(\lambda):= \frac{C_\kappa(\rho,L)}{S_\kappa(\rho,L)}
   \end{equation}
   for $\lambda$ that are not eigenvalues of the Dirichlet Problem. 
\end{definition}
The following proposition is standard, but we include their proof for the sake of completeness.
\begin{proposition}
    The Weyl function $M_{\kappa}^D$ is a meromorphic function of $\lambda$ with simple poles at the eigenvalues $\{\lambda_n\}_{n=1}^\infty$ of the Dirichlet problem and the residue at each eigenvalue is given by
    \begin{equation}\label{eq:residueWeyl}
         \operatorname{Res}_{\lambda=\lambda_n}M_\kappa^D(\lambda)=\frac{1}{\beta_n} \qquad \forall n\in \mathbb{N}.
    \end{equation}
\end{proposition}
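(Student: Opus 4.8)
The plan is to treat $M_\kappa^D(\lambda)=C_\kappa(\rho,L)/S_\kappa(\rho,L)$ as a ratio of two functions that are entire in $\lambda$ and then apply the standard residue formula for a quotient. First I would observe that, by the defining formulas \eqref{eq:definitionsolutioncosineandsine}, both $C_\kappa(\rho,x)$ and $S_\kappa(\rho,x)$ are even functions of $\rho$; combined with the SPPS representation of Theorem \ref{th:spps}, which shows that $e_\kappa(\rho,x)$ is entire in $\rho$, this means that $C_\kappa(\rho,L)$ and $S_\kappa(\rho,L)$ are entire functions of the variable $\lambda=\rho^2$. Consequently $M_\kappa^D$ is meromorphic in $\lambda$, and its poles can occur only at the zeros of $\lambda\mapsto S_\kappa(\rho,L)$, which by Theorem \ref{th:eigenvalores}(2) are exactly the eigenvalues $\{\lambda_n\}$.

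Next I would record the modified Wronskian identity. Since $C_\kappa$ and $S_\kappa$ both solve $-(\kappa u')'=\lambda\kappa u$, the quantity $\kappa(x)\bigl(C_\kappa(\rho,x)S_\kappa'(\rho,x)-C_\kappa'(\rho,x)S_\kappa(\rho,x)\bigr)$ is independent of $x$; evaluating at $x=0$ with the initial data \eqref{eq:conditionscosine} and \eqref{eq:conditionssine} together with the normalization $\kappa(0)=1$ gives the constant value $1$. In particular, setting $x=L$ and $\rho=\rho_n$ and using $S_\kappa(\rho_n,L)=0$ yields $\kappa(L)\,C_\kappa(\rho_n,L)\,S_\kappa'(\rho_n,L)=1$, so neither $C_\kappa(\rho_n,L)$ nor $S_\kappa'(\rho_n,L)$ can vanish.

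The central computation is to evaluate $\partial_\lambda S_\kappa(\rho,L)$ at $\lambda_n$. Writing $\dot S=\partial_\lambda S_\kappa$ and differentiating $-(\kappa S_\kappa')'=\lambda\kappa S_\kappa$ in $\lambda$ (the initial conditions being $\lambda$-independent, so that $\dot S(\lambda,0)=\dot S'(\lambda,0)=0$), I would verify the identity $\frac{d}{dx}\bigl[\kappa(S_\kappa'\dot S-S_\kappa\dot S')\bigr]=\kappa S_\kappa^2$. Integrating over $(0,L)$ --- the boundary term at $0$ vanishes by the initial data, and at $x=L$, $\lambda=\lambda_n$ only the term $\kappa(L)S_\kappa'(\rho_n,L)\dot S(\lambda_n,L)$ survives because $S_\kappa(\rho_n,L)=0$ --- gives
\[
\kappa(L)\,S_\kappa'(\rho_n,L)\,\dot S(\lambda_n,L)=\int_0^L \kappa(x)S_\kappa^2(\rho_n,x)\,dx=\beta_n.
\]
Since $\beta_n>0$ and $S_\kappa'(\rho_n,L)\neq 0$, this shows $\dot S(\lambda_n,L)\neq 0$, so $\lambda_n$ is a simple zero of the denominator; together with $C_\kappa(\rho_n,L)\neq 0$ this confirms that $M_\kappa^D$ has a simple pole there. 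Finally, the residue of a quotient at a simple zero of the denominator gives $\operatorname{Res}_{\lambda=\lambda_n}M_\kappa^D=C_\kappa(\rho_n,L)/\dot S(\lambda_n,L)$, and substituting the two displayed identities collapses this to $\kappa(L)C_\kappa(\rho_n,L)S_\kappa'(\rho_n,L)/\beta_n=1/\beta_n$.

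I expect the only delicate point to be the bookkeeping in the $\lambda$-differentiation identity: one must be careful that differentiation in $\lambda$ (rather than $\rho$) is legitimate, which is guaranteed by the even-$\rho$ / entire-in-$\lambda$ observation from the first step, and that the boundary contributions at $x=0$ and at $x=L$ are correctly read off. Everything else is a direct application of the Wronskian normalization and the elementary residue formula.
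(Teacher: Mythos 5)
Your proof is correct, and it reaches the key quantity $\frac{d}{d\lambda}S_\kappa(\rho,L)\big|_{\lambda=\lambda_n}$ by a route that differs from the paper's in its intermediate machinery. The paper introduces the auxiliary solution $\vartheta_\kappa(\lambda,x)$ normalized at the right endpoint ($\vartheta_\kappa(\lambda,L)=0$, $\vartheta_\kappa'(\lambda,L)=1/\kappa(L)$), observes that $\Delta_D(\lambda)=-\vartheta_\kappa(\lambda,0)$ and that $\vartheta_\kappa(\lambda_n,\cdot)=C_\kappa(\rho_n,L)S_\kappa(\rho_n,\cdot)$, and then obtains $\Delta_D'(\lambda_n)=C_\kappa(\rho_n,L)\beta_n$ from the Green/Lagrange identity $(\lambda-\lambda_n)\langle\vartheta_\kappa(\lambda,\cdot),S_\kappa(\rho_n,\cdot)\rangle_{L^2_\kappa}=\Delta_D(\lambda)$ by passing to the limit in the difference quotient. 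You instead differentiate the equation directly in $\lambda$, integrate the identity $\frac{d}{dx}\bigl[\kappa(S_\kappa'\,\partial_\lambda S_\kappa-S_\kappa\,\partial_\lambda S_\kappa')\bigr]=\kappa S_\kappa^2$, and close the computation with the Wronskian normalization $\kappa(x)\bigl(C_\kappa S_\kappa'-C_\kappa'S_\kappa\bigr)\equiv 1$; note that combining your two displayed identities recovers exactly the paper's formula $\Delta_D'(\lambda_n)=C_\kappa(\rho_n,L)\beta_n$. Your route has the advantage of avoiding the auxiliary solution altogether and of giving the non-vanishing of $C_\kappa(\rho_n,L)$ and $S_\kappa'(\rho_n,L)$ for free from the Wronskian (the paper deduces $C_\kappa(\rho_n,L)\neq 0$ less directly from the non-triviality of $\vartheta_\kappa(\lambda_n,\cdot)$); the paper's difference-quotient argument has the mild advantage of not requiring one to justify interchanging $\partial_\lambda$ with $\partial_x$ and with the operator $\bL$, which for $\kappa\in W^{1,2}$ is the one point you correctly flag as delicate and which is indeed covered by the entire dependence on $\lambda$ coming from the SPPS representation.
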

\begin{proof}
Let $\lambda\in \mathbb{C}$. By the SPPS method \cite{spps}, the functions $C_{\kappa}(\rho,x)$ and $S_{\kappa}(\rho,x)$ are entire in $\lambda$. Hence, $M_{\kappa}^D(\lambda)$ is meromorphic with poles located at the zeros of $\Delta_D(\lambda):=S_{\kappa}(\rho,L)$, which coincide with the eigenvalues $\{\lambda_n\}_{n=1}^{\infty}$ of the Dirichlet problem.
    Given $\lambda\in \mathbb{C}$, consider the solution $\vartheta_{\kappa}(\lambda,x)$ of \eqref{eq:SLEIF} satisfying the  conditions
    \[
    \vartheta_{\kappa}(\lambda,L)=0,\quad \vartheta_{\kappa}(\lambda,L)=\frac{1}{\kappa(L)}.
    \]
 A direct computation shows that
 \[
 \vartheta_{\kappa}(\lambda,x)=C_{\kappa}(\rho,L)S_{\kappa}(\rho,x)-S_{\kappa}(\rho,L)C_{\kappa}(\rho,x).
 \]
 Note that $\Delta_D(\lambda)=-\vartheta(\lambda,0)$, and if $\lambda=\lambda_n$, then
 \begin{equation}\label{eq:coefficientvartheta}
     \vartheta(\lambda_n,x)=C_{\kappa}(\rho_n,L)S_{\kappa}(\rho_n,x),
 \end{equation}
 which implies that $C_{\kappa}(\rho_n,L)\neq 0$. Integration by parts yields
 \begin{align*}
    (\lambda-\lambda_n) \langle \vartheta_{\kappa}(\lambda,\cdot), S_{\kappa}(\rho_n,x) \rangle_{L_{\kappa}^2}=& \langle \bL\vartheta_{\kappa}(\lambda,\cdot), S_{\kappa}(\rho_n,x) \rangle_{L_{\kappa}^2}-\langle \vartheta_{\kappa}(\lambda,\cdot), \bL S_{\kappa}(\rho_n,x) \rangle_{L_{\kappa}^2}\\
    =&\left[\vartheta_{\kappa}(\lambda,x)\kappa(x)S_{\kappa}'(\rho_n,x)-\kappa(x)\vartheta'_{\kappa}(\lambda,x)S_{\kappa}(\rho_n,x)\right]_0^L
    =-\vartheta(\lambda,0)
 \end{align*}
 Thus, $\frac{\Delta(\lambda)}{\lambda-\lambda_n}=\langle \vartheta_{\kappa}(\lambda,\cdot), S_{\kappa}(\rho_n,x) \rangle_{L_{\kappa}^2}$. Passing to the limit as $\lambda\rightarrow \lambda_n$ and using \eqref{eq:coefficientvartheta}, we obtain
 \[
 \frac{d\Delta_D(\lambda)}{d\lambda}\bigg{|}_{\lambda=\lambda_n}=C_{\kappa}(\rho_n,L)\beta_n \quad \forall n\in \mathbb{N}.
 \]
 Then each $\lambda_n$ is a simple zero of $\Delta_D(\lambda)$, and consequently, a simple pole of $M_D(\lambda)$, which implies that formula \eqref{eq:residueWeyl} holds.
\end{proof}\\
By the NSBF representations \eqref{eq:NSBFcoseno} and \eqref{eq:NSBFseno} of the solutions $C_\kappa(\rho,x)$ and $S_\kappa(\rho,x)$, we obtain the following representation for the Weyl function
\begin{equation}
    M_\kappa^D(\lambda)=\frac{\cos(\rho L) \;+\; \rho \displaystyle \sum_{k=0}^{\infty} (-1)^k \, \alpha_{2k+1}(L) \, j_{2k+1}(\rho L)}{\frac{\sin(\rho L)}{\rho} \;+\; \displaystyle \sum_{k=0}^{\infty} (-1)^{k+1} \, \alpha_{2k}(L) \, j_{2k}(\rho L)}.
    \label{eq:NSBFweyl}
\end{equation}
Of course, an approximation for the numerical computation is given by 
\begin{equation}
    M_{\kappa,N}^D(\lambda)=\frac{\cos(\rho L) \;+\; \rho \displaystyle \sum_{k=0}^{\lfloor (N-1)/2 \rfloor} (-1)^k \, \alpha_{2k+1}(L) \, j_{2k+1}(\rho L)}{\frac{\sin(\rho L)}{\rho} \;+\; \displaystyle \sum_{k=0}^{\lfloor N/2 \rfloor} (-1)^{k+1} \, \alpha_{2k}(L) \, j_{2k}(\rho L)}.
    \label{eq:AproximationNSBFweyl}
\end{equation}

\subsection{Relation with the Neumann problem}
Now we consider the problem with Neumann conditions
\[
u'(0) = u'(L) = 0.
\]
A direct computation shows that the eigenvalues $\mu = \rho^2$ are given by the zeros of the characteristic equation
\[
C_k'(\rho,L) = 0.
\]
Proceeding as in the proof of Theorem~\ref{th:eigenvalores}, we obtain an increasing sequence of eigenvalues $\{\mu_n = \rho_n^2\}_{n=0}^{\infty}$ with the peculiarity that $\mu_0 = 0$ with eigenfunction $\psi_0 \equiv 1$.

The corresponding eigenfunctions and normalizing constants for $\mu_n > 0$ are given by
\begin{equation*}
    \psi_n(x) = \frac{C_k(\rho_n x)}{\sqrt{\gamma_n}}, \quad \gamma_n = \int_0^L C_k^2(\rho_n x)\, \kappa(x)\, dx.
\end{equation*}
The Neumann problem can be reduced to Dirichlet problem with impedance equation $1/a$, via a Darboux transformation.

\begin{remark}[Darboux transform associated to the impedance equation]
    Following \cite{mineimpedance3}, we consider the {\bf Darboux transform} defined as follows. Let $u \in W^{2,2} (0,L)$ be a solution of Eq. \eqref{eq:SLEIF}.
    The function 
    \begin{equation}\label{eq:Daboux}
        v(x)=\kappa(x)u'(x)
    \end{equation}
is called the Darboux transform of $u$. According to \cite[Remark 2]{mineimpedance3}, $v \in W^{2,2}(0,L)$ and satisfies the {\bf Darboux associated equation}
    \begin{equation}\label{eq:DarbouxAssociated}
        -\kappa(x) \frac{d}{dx} \left( \frac{1}{\kappa(x)} \frac{dv}{dx}\right) =\lambda v.
    \end{equation}
In the particular case where $u$ is $C_\kappa(\rho,x)$, its Darboux transform is given by 
\begin{equation*}
    \kappa(x)C'_\kappa(\rho,x)=-\rho^2 S_{1/\kappa (\rho,x)}
\end{equation*}
(see \cite[Remark 9]{mineimpedance3}), from where we obtain that
 \begin{equation}\label{eq:DarbouxCk}
     C'_\kappa(\rho,x)= -\frac{\rho^2 S_{1/\kappa  }(\rho,x)}{\kappa(x)}.
 \end{equation}
Similarly, \cite[Remark 9]{mineimpedance3} establishes that
 \begin{equation}\label{eq:DarbouxSk}
     S'_\kappa(\rho,x)= \frac{C_{1/\kappa  }(\rho,x)}{\kappa(x)}.
 \end{equation}
\end{remark}

\begin{theorem}
The positive eigenvalues of the Neumann problem with conductivity function $\kappa(x)$ coincide with the eigenvalues of the Dirichlet problem with conductivity $1/\kappa(x)$. Moreover, the corresponding normalizing constants $\{\gamma_n^{\kappa}\}_{n=0}^{\infty}$ satisfy 
\begin{equation}\label{eq:normalizingdirichlet}
        \gamma_n^{\kappa}=\mu_n \beta_n^{1/\kappa} \qquad \forall n\in \mathbb{N}.
\end{equation}
\end{theorem}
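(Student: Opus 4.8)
The plan is to exploit the Darboux relation \eqref{eq:DarbouxCk}, which links the cosine-type solution $C_\kappa$ for conductivity $\kappa$ with the sine-type solution $S_{1/\kappa}$ for conductivity $1/\kappa$, and then to derive the identity between the normalizing constants from an energy identity obtained by integration by parts.

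First I would establish the coincidence of the spectra. For $\rho\neq 0$, relation \eqref{eq:DarbouxCk} gives
\[
C'_\kappa(\rho,L)=-\frac{\rho^2 S_{1/\kappa}(\rho,L)}{\kappa(L)},
\]
and since $\kappa(L)\neq 0$ and $\rho\neq 0$, we have $C'_\kappa(\rho,L)=0$ if and only if $S_{1/\kappa}(\rho,L)=0$. Thus the characteristic equation of the Neumann problem for $\kappa$ coincides, away from $\rho=0$, with the characteristic equation of the Dirichlet problem for $1/\kappa$, so the positive Neumann eigenvalues $\{\mu_n\}_{n\geq 1}$ are exactly the Dirichlet eigenvalues for $1/\kappa$.

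Next I would relate the normalizing constants. Using \eqref{eq:DarbouxCk} to write $S_{1/\kappa}(\rho_n,x)=-\kappa(x)C'_\kappa(\rho_n,x)/\rho_n^2$ and substituting into $\beta_n^{1/\kappa}=\int_0^L S_{1/\kappa}^2(\rho_n,x)\,\frac{dx}{\kappa(x)}$ (note that the Dirichlet problem for $1/\kappa$ carries the weight $1/\kappa$ in its inner product) yields
\[
\beta_n^{1/\kappa}=\frac{1}{\rho_n^4}\int_0^L \kappa(x)\big(C'_\kappa(\rho_n,x)\big)^2\,dx.
\]
The key remaining ingredient is the energy identity $\int_0^L \kappa (C'_\kappa)^2\,dx=\rho_n^2\int_0^L \kappa\,C_\kappa^2\,dx=\rho_n^2\gamma_n^\kappa$. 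I would obtain it by multiplying the equation $(\kappa C'_\kappa)'=-\rho_n^2\kappa C_\kappa$ by $C_\kappa$, integrating over $(0,L)$, and integrating by parts; the boundary terms vanish because $C'_\kappa(\rho_n,0)=0$ by the initial condition \eqref{eq:conditionscosine} and $C'_\kappa(\rho_n,L)=0$ by the Neumann condition at the eigenvalue. Combining the two displays gives $\gamma_n^\kappa=\rho_n^2\beta_n^{1/\kappa}=\mu_n\beta_n^{1/\kappa}$.

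The computation is essentially routine; the only point requiring care is the exclusion of $\rho=0$, which is exactly the omitted case $\mu_0=0$, together with the verification that both boundary contributions in the integration by parts vanish. I do not anticipate a serious obstacle beyond the bookkeeping of the weights: keeping track that the Dirichlet problem for $1/\kappa$ uses the weight $1/\kappa$ is precisely what produces the factor $\rho_n^2$ linking $\gamma_n^\kappa$ and $\beta_n^{1/\kappa}$.
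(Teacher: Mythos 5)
Your proof is correct. The first half (coincidence of the positive spectra via \eqref{eq:DarbouxCk}) is exactly the paper's argument. For the normalizing constants the two computations differ in which Darboux identity carries the load: the paper starts from \eqref{eq:DarbouxSk}, writes $C_\kappa(\rho_n,x)=\frac{1}{\kappa(x)}S'_{1/\kappa}(\rho_n,x)$, so that $\gamma_n^\kappa=\int_0^L\frac{1}{\kappa}\bigl(S'_{1/\kappa}\bigr)^2dx$, and integrates by parts once, using the Dirichlet conditions $S_{1/\kappa}(\rho_n,0)=S_{1/\kappa}(\rho_n,L)=0$ to kill the boundary term and the Darboux-associated equation for $S_{1/\kappa}$ to produce $\mu_n\beta_n^{1/\kappa}$ in a single step. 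You instead reuse \eqref{eq:DarbouxCk} to express $\beta_n^{1/\kappa}=\rho_n^{-4}\int_0^L\kappa\bigl(C'_\kappa\bigr)^2dx$ and close the loop with the energy identity for $C_\kappa$, whose boundary term is killed by the Neumann conditions $C'_\kappa(\rho_n,0)=C'_\kappa(\rho_n,L)=0$. The two arguments are mirror images of each other, one integration by parts on each side of the Darboux transform; yours has the minor advantage of invoking only the single identity \eqref{eq:DarbouxCk} throughout, at the cost of the intermediate factor $\rho_n^{4}$, while the paper's version lands on $\mu_n\beta_n^{1/\kappa}$ directly. Both correctly restrict to $n\geq 1$, excluding $\mu_0=0$.
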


\begin{proof}
    If $\mu = \rho^2 > 0$, then it is an eigenvalue of the Neumann problem with conductivity $\kappa$ iff it satisfies the characteristic equation 
\[
C'_{\kappa}(\rho, L) = 0.
\]
According to Eq. \eqref{eq:DarbouxCk}
\begin{equation}\label{eq:auxiliarchareqck}
    C'_{\kappa}(\rho, L) =  \frac{- \mu S_{1/\kappa}(\rho, L)}{\kappa(L)}.
\end{equation}
Since $\kappa(L) > 0$, and $\mu > 0$ by hypothesis, then $C'_{\kappa}(\rho, L) = 0$ iff $S_{1/\kappa}(\rho, L) = 0$, i.e. iff $\mu$ is an eigenvalue of the Dirichlet problem with conductivity $1/\kappa$. For $n\geq 1$, let $\gamma_n^{\kappa}$ be the normalizing constant associated with $\mu_n=\rho_n^2$. Using relation \eqref{eq:DarbouxSk}, integration by parts together with the equality \eqref{eq:auxiliarchareqck} yields
\begin{align*}
  \gamma_n^{\kappa}& = \int_0^L C_{\kappa}^2(\rho_n,x)\kappa(x)dx= \int_0^L \left(\frac{1}{\kappa(x)}S'_{1/\kappa}(\rho_n,x)\right)S'_{1/\kappa}(\rho_n,x)dx \\
  & = \frac{1}{\kappa(x)}S'_{1/\kappa}(\rho_n,x)S_{1/\kappa}(\rho_n,x)\bigg{|}_0^L-\int_0^L \left(\frac{1}{\kappa(x)}S'_{1/\kappa}(\rho_n,x)\right)'S_{1/\kappa}(\rho_n,x)dx\\
  &= \mu_n \int_0^L S_{1/\kappa}^2(\rho_n,x)\frac{dx}{\kappa(x)}= \mu_n \beta_n^{1/\kappa}.
\end{align*}
\end{proof}\\
On this way, the problem of calculating the positive eigenvalues $\{\mu_{n}\}_{n=1}^{\infty}$ of the Neumann problem is reduced to the calculation of the corresponding eigenvalues of the Dirichlet problem with conductivity $1/\kappa$.\\
Concerning the Weyl function for the Neumann problem, this is defined as follows. For $\mu\in \mathbb{C}$ that is not an eigenvalue of the Neumann problem,  let $\Psi_{\kappa}(\mu,x)$ be the unique solution of Eq. \eqref{eq:SLEIF} satisfying the boundary conditions 
\[
\Psi_{\kappa}'(\mu,0)=1, \quad \Psi_{\kappa}'(\mu,L)=0.
\]
A direct computation shows that
\[
  \Psi_{\kappa}(\mu,x)=S_{\kappa}(\mu,x)-\frac{S'_{\kappa}(\mu,L)}{C'_{\kappa}(\mu,L)}C_{\kappa}(\mu,x).
\]
The Weyl function of the Neumann problem is defined as 
\begin{equation}\label{eq:Weylneumann}
    M_{\kappa}^N(\mu):= \frac{S_{\kappa}'(\rho,L)}{C'_{\kappa}(\rho,L)}.
\end{equation}
\begin{proposition}
    Let $\mu\in \mathbb{C}$ be such that is not an eigenvalue of the Neumann problem. Then the following relation holds:
    \begin{equation}\label{eq:Weylneumanntodirichlet}
        M_{\kappa}^N(\mu)=-\frac{1}{\mu}M_{1/\kappa}^D(\mu).
    \end{equation}
\end{proposition}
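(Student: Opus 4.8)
The plan is to obtain the identity by a direct substitution of the two Darboux transform relations \eqref{eq:DarbouxCk} and \eqref{eq:DarbouxSk}, evaluated at $x=L$, into the definition \eqref{eq:Weylneumann} of the Neumann Weyl function. Writing $\rho^2=\mu$, relation \eqref{eq:DarbouxSk} gives $S'_{\kappa}(\rho,L)=C_{1/\kappa}(\rho,L)/\kappa(L)$, while \eqref{eq:DarbouxCk} gives $C'_{\kappa}(\rho,L)=-\mu\,S_{1/\kappa}(\rho,L)/\kappa(L)$. Forming the quotient, the common factor $1/\kappa(L)$ cancels, and one is left with
\[
M_{\kappa}^N(\mu)=\frac{S'_{\kappa}(\rho,L)}{C'_{\kappa}(\rho,L)}
=\frac{C_{1/\kappa}(\rho,L)/\kappa(L)}{-\mu\,S_{1/\kappa}(\rho,L)/\kappa(L)}
=-\frac{1}{\mu}\,\frac{C_{1/\kappa}(\rho,L)}{S_{1/\kappa}(\rho,L)}.
\]
Recognizing the remaining ratio as the Dirichlet Weyl function for the conductivity $1/\kappa$ through \eqref{def:Weylfunction} then yields the claimed identity \eqref{eq:Weylneumanntodirichlet}.

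Before carrying out this computation, I would first verify that the right-hand side is well defined, i.e.\ that $\mu$ is not an eigenvalue of the Dirichlet problem with conductivity $1/\kappa$. This is precisely the equivalence established in the preceding theorem: from \eqref{eq:auxiliarchareqck} (equivalently, directly from \eqref{eq:DarbouxCk}) one has $C'_{\kappa}(\rho,L)=0$ if and only if $S_{1/\kappa}(\rho,L)=0$, so the hypothesis that $\mu$ is not a Neumann eigenvalue for $\kappa$ is equivalent to $\mu$ not being a Dirichlet eigenvalue for $1/\kappa$. Consequently $S_{1/\kappa}(\rho,L)\neq 0$, and both $M_{\kappa}^N(\mu)$ and $M_{1/\kappa}^D(\mu)$ are finite, so the quotient manipulation above is legitimate.

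There is essentially no serious obstacle in this argument: the proof reduces to a one-line substitution once the Darboux identities are invoked. The only points demanding any care are the bookkeeping of the spectral factor $\mu=\rho^2$ introduced by \eqref{eq:DarbouxCk}, the cancellation of the $\kappa(L)$ factors common to numerator and denominator, and the well-definedness observation recorded above. All of the analytic content has already been absorbed into the Darboux relations \eqref{eq:DarbouxCk}--\eqref{eq:DarbouxSk} and into the eigenvalue correspondence of the preceding theorem.
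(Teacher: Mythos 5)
Your proposal is correct and follows exactly the route the paper takes: the paper's proof is precisely the substitution of the Darboux relations \eqref{eq:DarbouxCk} and \eqref{eq:DarbouxSk} into the definitions \eqref{eq:Weylneumann} and \eqref{def:Weylfunction}. Your additional remark on the well-definedness of the right-hand side (via the equivalence $C_{\kappa}'(\rho,L)=0 \iff S_{1/\kappa}(\rho,L)=0$) is a welcome detail that the paper leaves implicit.
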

\begin{proof}
    It follows from the definitions \eqref{eq:Weylneumann} and \eqref{def:Weylfunction} and from formulas \eqref{eq:DarbouxCk} and \eqref{eq:DarbouxSk}.
\end{proof}

\begin{remark}
    Similar relations can be obtained for Sturm-Liouville problems with mixed boundary conditions, namely Neumann–Dirichlet conditions $u'(0)=u(L)=0$ (for which, the characteristic equation is $C_{\kappa}(\rho,L)=0$) and Dirichlet to Neumann conditions $u(0)=u'(L)=0$ (with characteristic equation $S_{\kappa}'(\rho,L)=0$). \\
    In fact, the Darboux relations \eqref{eq:DarbouxCk} and \eqref{eq:DarbouxSk} can be used to obtain NSBF representations for  problems with general separated boundary conditions
    \[
      a_1 u(0)+b_1u'(0)=0,\quad a_2 u(L)+b_2u'(L)=0, \quad \text{where } a_i,b_i\in \mathbb{R} \text{ and  }|a_i|+|b_i|>0, i=1,2.
    \]
\end{remark}

\section{Numerical solution}
\subsection{Implementation details}

In this subsection, we describe the numerical procedure used to approximate the eigenvalues and eigenfunctions of the Dirichlet problem associated with Sturm-Liouville equation in impedance form. This method relies on the NSBF representation and the use of the characteristic equation $S_{\kappa}(\rho, L) = 0$ derived from it.\\
The eigenvalues $\lambda = \rho^2$ are obtained by solving the equation:

\begin{equation}
    0 = \frac{\sin(\rho L)}{\rho} - \sum_{n=0}^{\infty} (-1)^n \alpha_{2n}(L) j_{2n}(\rho L),
\end{equation}
where $\{\alpha_n(x)\}$ are the coefficients of the Furier-Legendre expansion. \\
We approximate the eigenvalues by means of the roots of the truncated series up to an order $N$:

\begin{equation}\label{eq:caracteristicaj0}
    0 =L j_0(\rho L) - \sum_{n=0}^{\lfloor N/2 \rfloor} (-1)^n \alpha_{2n}(L) j_{2n}(\rho L),
\end{equation}
where we used 
$\frac{\sin(\rho x)}{\rho}=xj_0(\rho x)$. 
This truncated characteristic function is evaluated over a discrete set of $\rho$ values to detect sign changes and approximate roots. The purpose of the following pseudocode is to provide a high-level description of the
algorithm implemented, allowing the reader to reconstruct or adapt the method according
to their specific needs.

\begin{algorithm}
\textbf{Pseudocode for the numerical computation of eigenvalues and eigenfunctions:}
\begin{enumerate}
    \item Fix $L > 0$ and define a uniform mesh $x_i \in [0, L]$.
    \item Define the conductivity function $\kappa(x)$ and its derivative $\kappa'(x)$ over the mesh.
    \item Compute the matrix of coefficients NSBF $\Phi_n=(\alpha _n (x_j))$ using the recurrence relations for $\alpha_n:= \frac{\sigma_n(x)}{x^n}$ and compute $(\sigma_n(x))$ using the relations~\ref{eq:sigma0}-\ref{eq:simgamgeq2}.
    \item Let $\Psi := \Phi[:, N]$ be the last column of $\Phi$.
    
    \item Define the characteristic function using the truncated series from equation~\eqref{eq:caracteristicaj0}: \\
    $S_N(\rho)=Lj_0(\rho L)-\sum_{n=0}^{\lfloor N/2 \rfloor}(-1)^n \Psi_{2n} j_{2n}(\rho L)$.
    
    \item Evaluate $S_N(\rho)$ over a suitable interval of $\rho$ values.
    \item Detect sign changes in $S_N(\rho)$ to locate intervals containing roots.
    \item Apply a root-finding method to find $\rho_n$ such that $S_N(\rho_n) = 0$.
    \item Set $\lambda_n = \rho_n^2$ as the corresponding eigenvalue.
    \item For each $\lambda_n$, compute the eigenfunction $S_N(\rho_n,x)$ via the NSBF $N-$partial sum.
\end{enumerate}
\end{algorithm}

\subsection{Numerical examples}

Before presenting the numerical examples, we briefly describe the computational procedure carried out for the implementation of the NSBF method.\\
All calculations were performed using \texttt{Python}. The recursive integrals required for computing the Fourier-Legendre coefficients $\{\alpha_n(x)\}$ were evaluated using Newton–Cotes integration of order 6. A uniform mesh of 2000 points was used in the interval $[0,L]$ to discretize the domain and ensure stability in the numerical integration.\\
Once the coefficients were computed, we proceeded to calculate the zeros of the truncated characteristic function \eqref{eq:caracteristicaj0}. To do this, we discretized the spectral parameter $\rho$ over the interval $[0,200]$ using a uniform mesh of 500 points and 120 coefficients. The interval was subdivided, and sign changes of the characteristic function were detected to locate subintervals containing zeros. Each zero was then refined using \texttt{scipy.optimize.root\_scalar}, a reliable root-finding function from the \texttt{scipy.optimize} module. We used the function \texttt{np.set\_printoptions(precision=14)} to get a better precision.\\
This procedure allowed us to compute the eigenvalues efficiently and with high precision. All computations were carried out on a standard laptop equipped with an Intel(R) Celeron(R) N4000 CPU @ 1.10GHz and 4 GB of RAM. The numerical process was fast and completed in fractions of a second.\\
We now proceed to present some numerical examples, starting with a problem whose exact characteristic equation is known.

\begin{example}[Impedance $a(x)=(1+x)^2$]
The impedance \( a(x) = (1+x)^2 \) exemplifies non-uniform media (e.g., acoustic horns) with analytically tractable yet non-trivial eigenvalues, validating the NSBF method's precision for singular problems.  
In this case, the corresponding impedance equation can be transformed into a Schrödinger equation via the Liouville transformation as follows. If $u$ is a solution of 
\[
- \frac{1}{(1+x)^4} \frac{d}{dx} \left( (1+x)^4 \frac{du}{dx} \right) = \lambda u,
\]
then $v = (1+x)^2 u$ is a solution of the Schrödinger equation
\begin{equation}\label{eq:ESLSchrodinger}
    - v'' + Q_a(x)v = \lambda v, \quad \text{where } \; Q_a(x) = \frac{a''}{a} = \frac{2}{(1+x)^2},
\end{equation}
see \cite[Sec.~3.4]{mineimpedance1}. The Dirichlet conditions are preserved under this transformation. The Dirichlet problem for the SLEIF is transformed into the Dirichlet problem for (\ref{eq:ESLSchrodinger}). In this case, the characteristic equation of the problem is known explicitly and is given by
\begin{equation}
    \Phi(\rho) = \frac{\sin (\rho L)}{\rho} + \frac{1}{\rho}\,\frac{L^2}{(1+L)} j_1(\rho L) = 0,
\end{equation}
see \cite[Subsec. 3.3]{kravchenkodarboux} and \cite{Rodcross}.
In order to compute the exact eigenvalues, we proceed by analyzing the zeros of the characteristic function. The interval is divided into subintervals and we check how the sign of the function changes. Each change of sign indicates the presence of a zero, which corresponds to an eigenvalue. \\
This procedure provides a systematic way to obtain the sequence of exact eigenvalues. In practice, it is implemented numerically by evaluating the characteristic function at a sufficiently dense set of points and applying a root-finding method whenever a sign change is detected. 

\begin{table}[h!]\label{Tab:ejemplo1}
\centering
\begin{tabular}{cccc}
\toprule
$n$ & $\lambda_{\text{exact}}$ & $\lambda_{\text{NSBF}}$ & $|\lambda_{\text{exact}} - \lambda_{\text{NSBF}}|$ \\
\midrule
1  & 1.34805063504836 & 1.34805063504823 & 1.29e-13 \\
2  & 4.43028410885793 & 4.43028410885799 & 6.00e-14 \\
3  & 9.45645939050872 & 9.45645939050879 & 7.00e-14 \\
4  & 16.4672876486339 & 16.4672876486346 & 7.00e-13 \\
5  & 25.4726699599228 & 25.4726699599230 & 2.00e-13 \\
6  & 36.4757027586749 & 36.4757027586759 & 1.00e-12 \\
20 & 400.482239127963 & 400.482239127971 & 8.00e-12 \\
21 & 441.482300968039 & 441.482300968044 & 5.00e-12 \\
22 & 484.482354595155 & 484.482354595153 & 2.00e-12 \\
23 & 529.482401400839 & 529.482401400838 & 1.00e-12 \\
30 & 1024.48264505729 & 1024.48264505730 & 1.00e-11 \\
\bottomrule
\end{tabular}
\caption{Comparison of exact eigenvalues and NSBF-computed eigenvalues for the impedance equation with $a(x) = (1+x)^2$. Absolute errors are reported to quantify the accuracy.}
\label{tab:eigenvalues_comparison}
\end{table}

\begin{figure}[h!]
    \centering
    \includegraphics[width=1\linewidth]{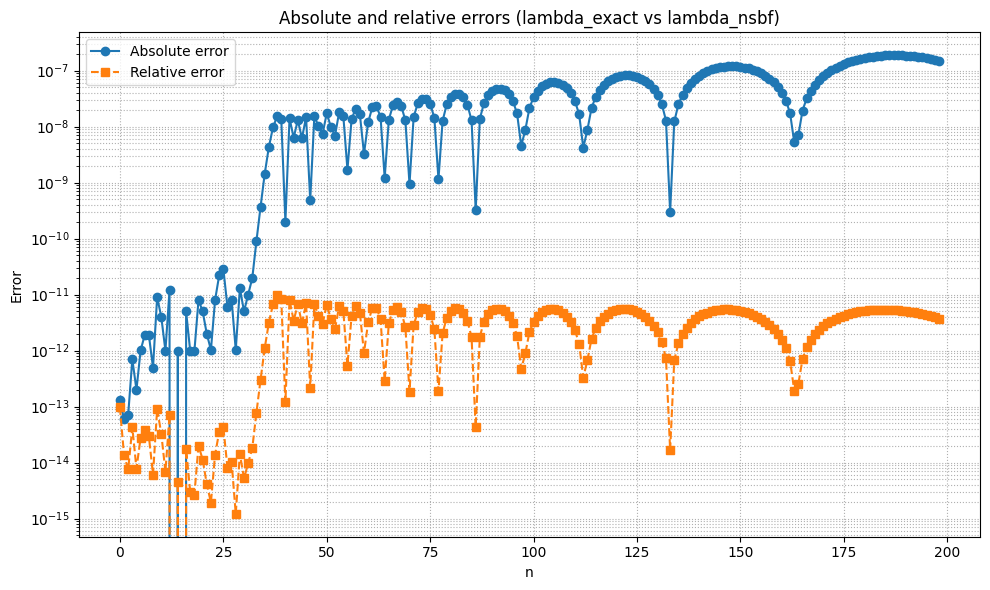}
    \caption{Absolute and relative errors between the exact eigenvalues and those computed with the NSBF method (See Table~\ref{Tab:ejemplo1}). A logarithmic scale is used on the vertical axis to highlight the error magnitude across several orders.}
    \label{fig:Ejemplo1_Errores}
\end{figure}
\FloatBarrier 
Figure~\ref{fig:Ejemplo1_Errores} displays the absolute and relative errors obtained by comparing the eigenvalues computed with the NSBF method against those obtained from the characteristic equation. The logarithmic scale on the vertical axis highlights the error behavior across several orders of magnitude. For the lowest modes, the absolute errors lie in the range $10^{-14}$--$10^{-13}$, essentially reaching the limit of double-precision floating-point arithmetic. Indeed, in IEEE-754 binary64 format only about 15--16 significant decimal digits can be reliably represented, corresponding to a machine epsilon of approximately $2.22 \times 10^{-16}$~\cite{Goldberg1991}. This indicates that the discrepancies observed at the fundamental modes are not attributable to the NSBF method itself, but rather to the intrinsic limits of numerical representation in finite precision arithmetic. As the index $n$ increases, the absolute error grows progressively and stabilizes around $10^{-8}$, which is consistent with the accumulation of numerical round-off and with the oscillatory structure of the NSBF expansion. In contrast, the relative error remains nearly uniform in the range $10^{-12}$--$10^{-11}$ even for higher modes.\\
From a physical and mathematical perspective, the eigenvalues computed in this example can be interpreted as characteristic spectral parameters associated with the underlying SL operator, while the corresponding eigenfunctions represent its spatial modes. In problems of wave propagation in inhomogeneous media, equations of this type naturally arise, with the impedance coefficient accounting for spatial variations of the medium. In such settings, the eigenfunctions describe standing-wave patterns adapted to the local properties of the medium, and the eigenvalues determine the admissible resonant frequencies. Accordingly, the lowest eigenvalues correspond to the fundamental modes of the system, whereas higher modes exhibit an increasing number of nodes.
\begin{figure}[h!]
    \centering
    \includegraphics[width=.77\linewidth]{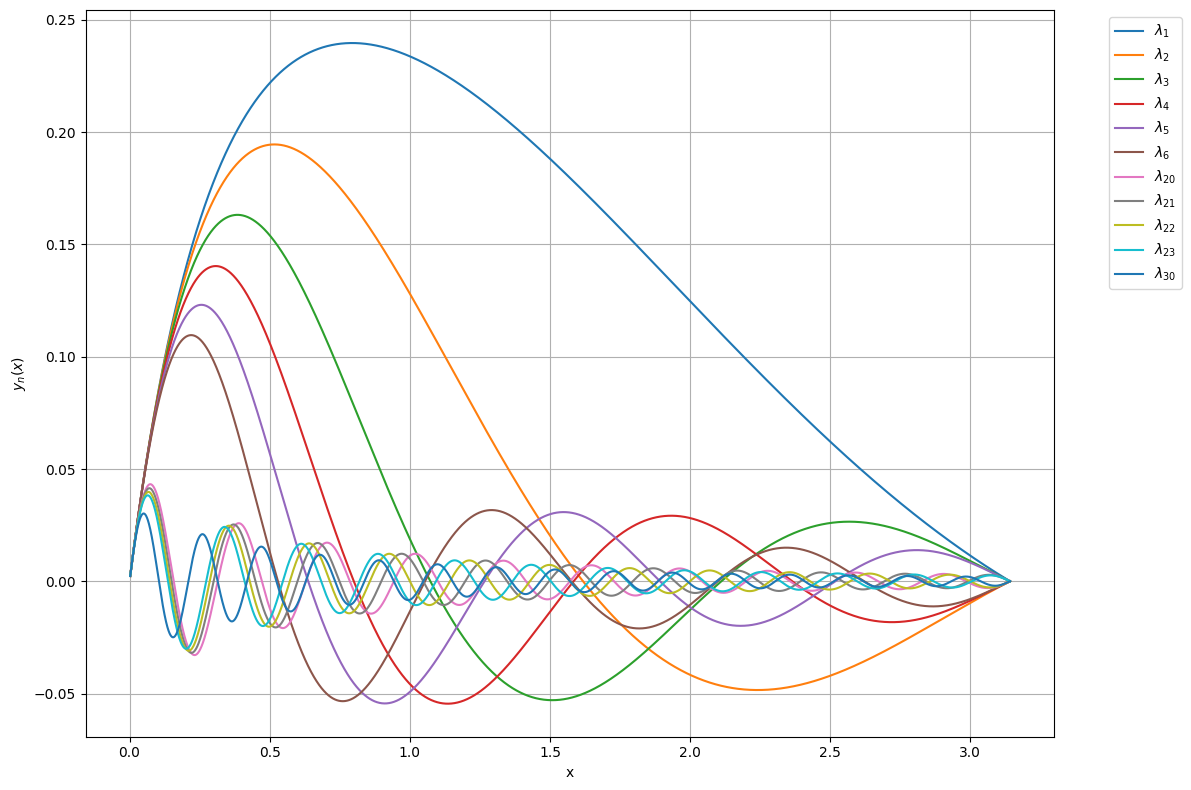}
    \caption{Eigenfunctions associated with the eigenvalues listed in Table~\ref{Tab:ejemplo1}, computed using the NSBF method.}
    \label{fig:placeholder}
\end{figure}\\
For computing the Weyl function we employ the analytical approximation $M_{\kappa,N}$ given by formula \eqref{eq:AproximationNSBFweyl}. 
In Figure \ref{fig:weylejemplo1} we observe the graph of the Weyl function associated with the Dirichlet problem.
For the real part of $\lambda$ we take values ranges from $-10$ to $10$ and for the imaginary part from $0$ to $10$. The observed jumps are located in the eigenvalues of the Dirichlet problem.
\begin{figure}[h!]
    \centering
    \includegraphics[width=1\linewidth]{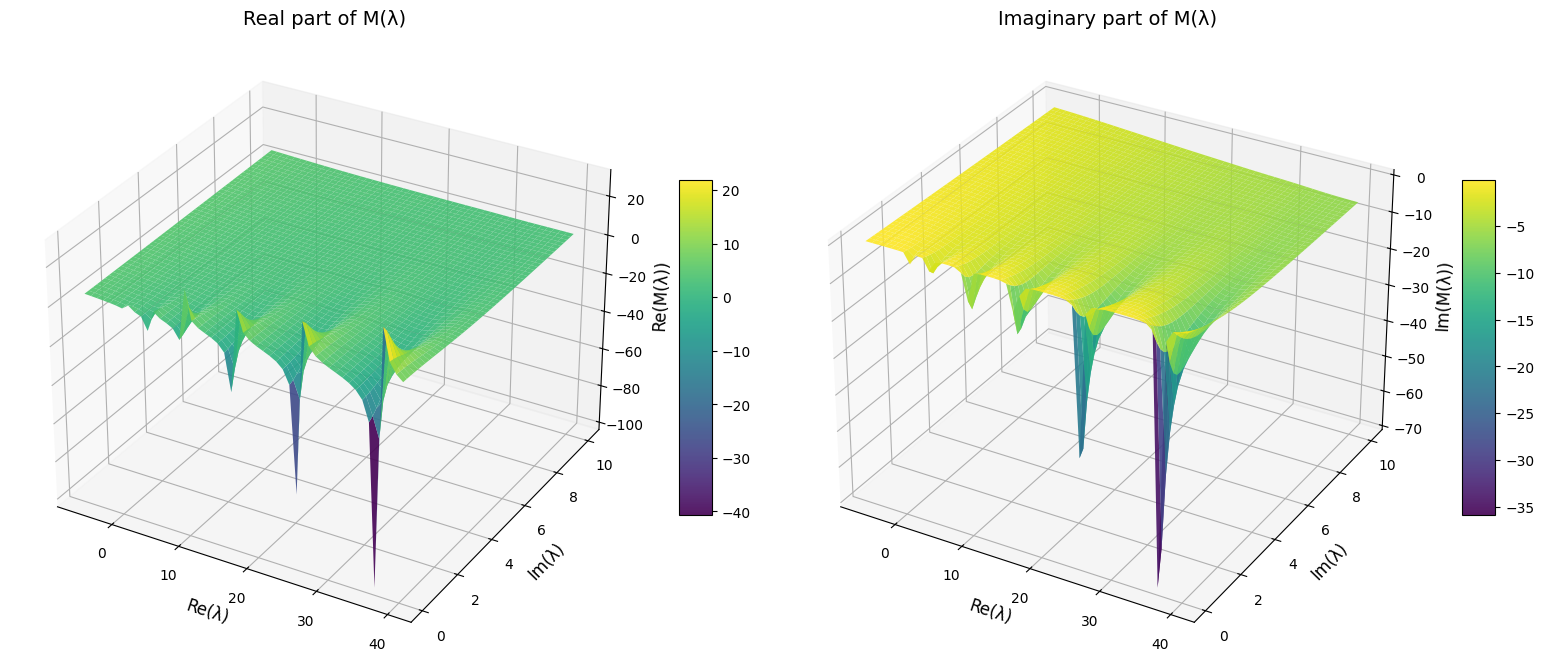}
    \caption{Graph of the real and imaginary part of Weyl function of the Dirichlet problem for impedance $a(x)=(1+x)^2$ in the $\lambda$ complex plane.}
    \label{fig:weylejemplo1}
\end{figure}

\end{example}

\FloatBarrier
\begin{example}[Non smooth impedance]
    In this example we consider the triangular conductivity function 
    \begin{equation*}
    \kappa(x)=
        \begin{cases}
            1+x,& \text{if  } \;0\leq x\leq \frac{1}{2},\\
            2-x,& \text{if  } \;\frac{1}{2}< x \leq 1.
        \end{cases}
    \end{equation*}
The graph of the triangular conductivity show Figure \ref{fig:ImpedanciaTriangulito}. In this case $L=1$ and $\kappa \in W^{1,2}(0,1)$, but its first derivative is a Heaviside-type function, so $\kappa \not\in W^{2,2}(0,1)$. Consequently, this equation cannot transform to a regular Schr\"odinger equation. The Dirichlet problem does not admit an explicit characteristic equation. We employ the NSBF series with $N=40$ and we computed $63$ eigenvalues.
    \begin{figure}[h!]
    \centering
    \includegraphics[width=.7\linewidth]{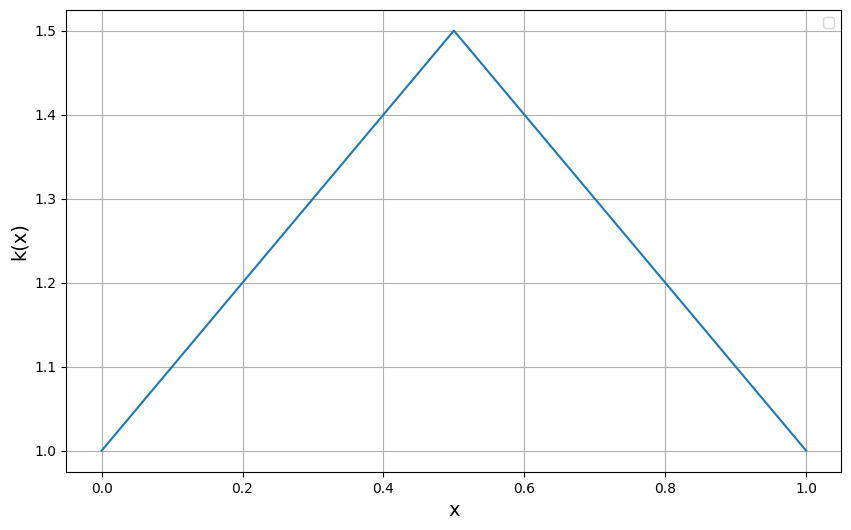}
    \caption{The graph of the triangular conductivity.}
    \label{fig:ImpedanciaTriangulito}
    \end{figure}\\
We compare the eigenvalues reported in Gao \cite{Gao}, which they computed using the Descent Flow method, with the ones that we obtained via the SPPS method and with our own results using NSBF.
For the SPPS method, we observe that for $n=11$ a significant error arises. This behavior is explained by the fact that SPPS relies on a Taylor series based representation, whose validity is inherently local. Consequently, it is common in the literature to apply the spectral shift technique \cite{spps} to improve the numerical performance of SPPS under such circumstances.
In the contrast, the NSBF method does not require any spectral shift, due to the uniform approximation of the eigenvalues ensured by the result stated in inequality \eqref{eq:estimateNSBF}.

\begin{table}[h!]\label{Tab:ejemplo2}
\centering
\begin{tabular}{cccc}
\toprule
$n$ & $\lambda_{\text{Gao}} \cite{Gao}$ & $\lambda_{\text{SPPS}} \cite{spps}$ & $\lambda_{\text{NSBF}}$ \\
\midrule
1  & 8.3513e+00   & 8.351443468115294      & 8.35158241342649   \\
2  & 3.9316e+01   & 39.31584966139984      & 39.3158490304397   \\
3  & 8.7324e+01   & 87.32512433822502      & 87.3252433641185   \\
4  & 1.5775e+02   & 157.7480942179849      & 157.748091718428   \\
5  & 2.4524e+02   & 245.2396600060401      & 245.239779974236   \\
6  & 3.5514e+02   & 355.1395825656164      & 355.139577098527   \\
7  & 4.8211e+02   & 482.1103986679944      & 482.110512751393   \\
8  & 6.3148e+02   & 631.4879392265606      & 631.487939226560  \\
9  & 7.9793e+02   & 798.7713794569980      & 798.771379456998   \\
10 & 9.8680e+02   & 985.2201158113872      & 985.220115811387  \\
11 & 1.1927e+03   & 1196.413213808432      & 1192.73369448276   \\
\bottomrule
\end{tabular}
\caption{Comparison of eigenvalues computed by Gao \cite{Gao}, SPPS \cite{spps}, and NSBF methods.}
\label{tab:eigenvalues_comparisonEj.2}
\end{table}

    \begin{figure}[h!]
    \centering
    \includegraphics[width=1\linewidth]{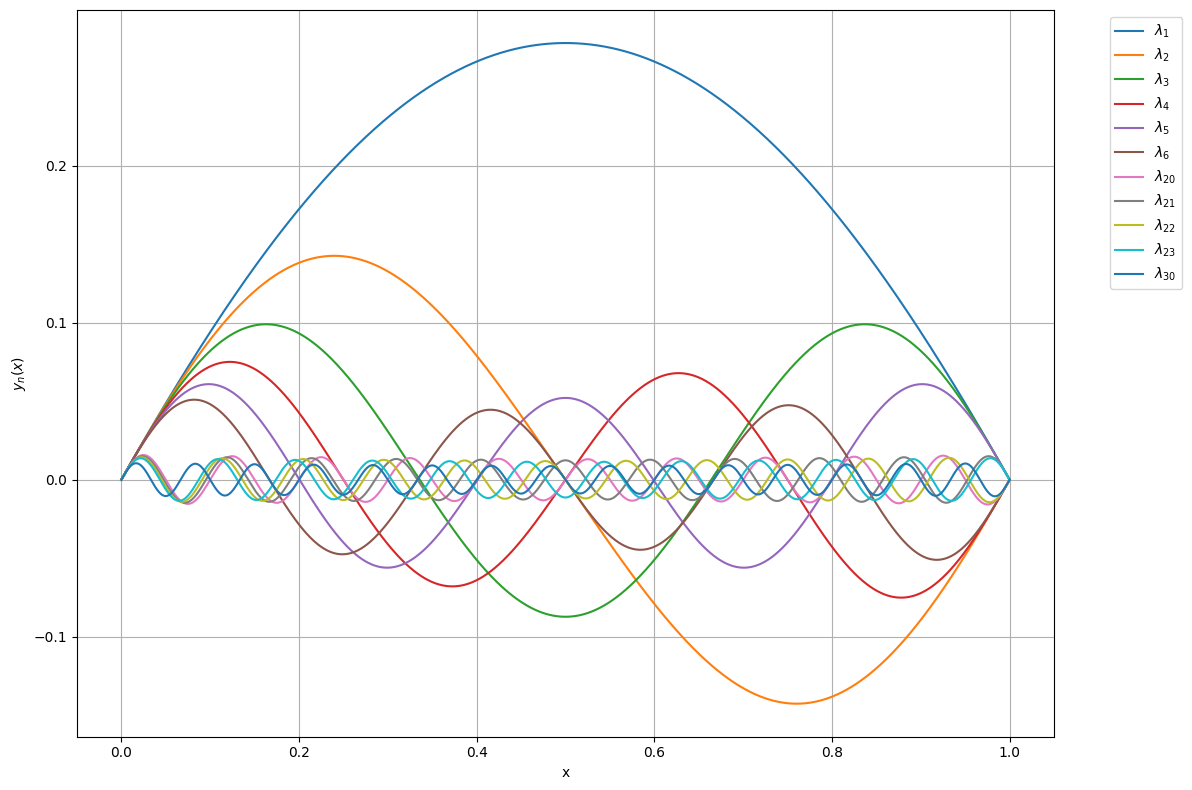}
    \caption{The graph of the eigenfunctions associated to the Dirichlet problem of the triangular conductivity.}
    \label{fig:EigenfuncionesTriangulito}
    \end{figure}
In Figure \ref{fig:WeylTriangulito} we observe the graph of the Weyl function associated with the Dirichlet problem.
For the real part of $\lambda$ we take values ranges from $0$ to $200$ and for the imaginary part from $0$ to $10$.
    \begin{figure}[h!]
    \centering
    \includegraphics[width=1\linewidth]{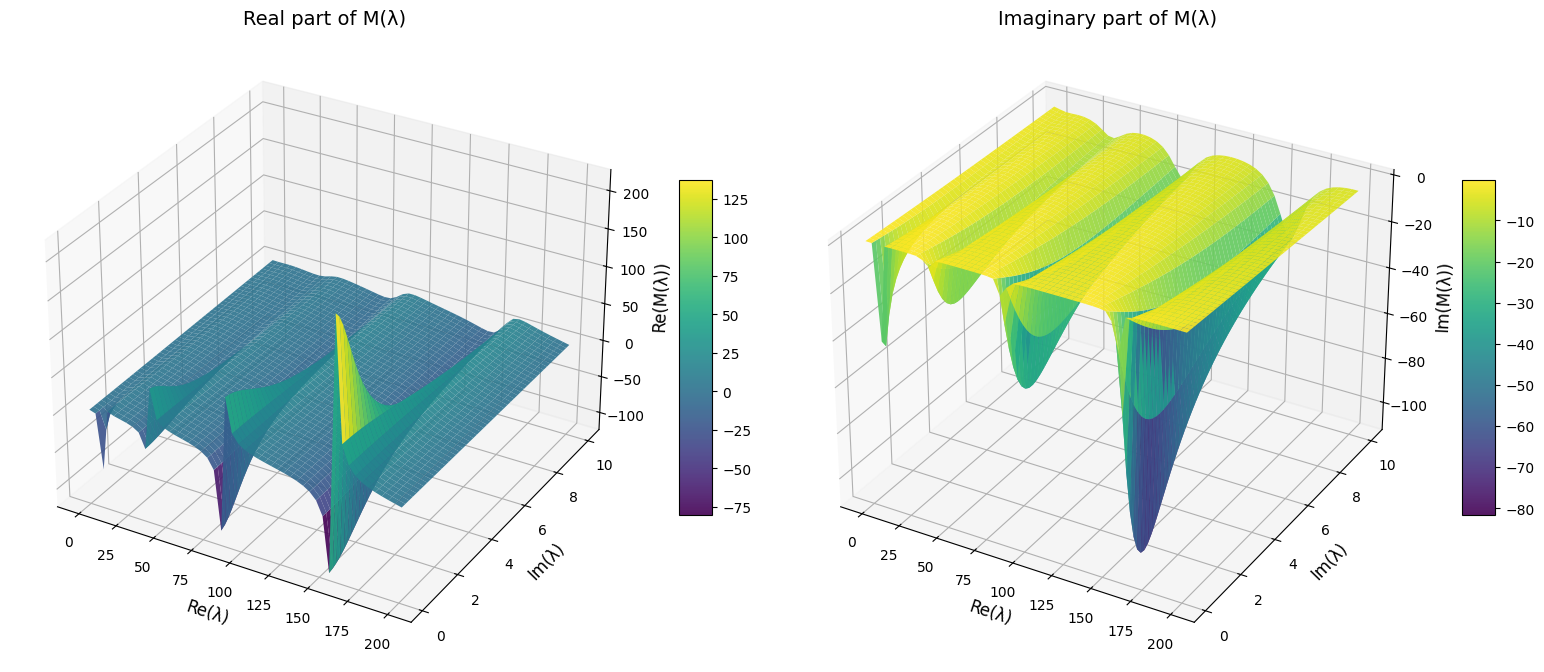}
    \caption{Graph of the real and imaginary part of Weyl function of the Dirichlet problem for the triangular conductivity.}
    \label{fig:WeylTriangulito}
    \end{figure}
\end{example}

\FloatBarrier
\section{Discussion and summary}
In this work, we derived a Neumann series of Bessel functions (NSBF) representation for the solution of Sturm-Liouville equation in impedance form. The obtained method provides a solid and general framework, applicable even when $\kappa \in W^{1,2}(0,L)$ and no additional smoothness is assumed. This places our results as a natural extension of previous NSBF-based representations, which typically require more regular coefficients (see \cite{NSBF1,NSBF2}). \\
 The expansion is obtained directly from the canonical transmutation kernel associated with the equation, and not through a reduction to the Schrödinger form, and it yields explicit truncation error estimates \eqref{eq:estimateNSBF} that remain uniform in the spectral parameter. We further developed an efficient recursive procedure for the computation of the NSBF coefficients formulated entirely within the impedance setting, allowing the representation to remain valid for conductivities of low regularity.

Similarly, the NSBF series presented here can be employed to solve problems associated with the Schrödinger equation via the Liouville transformation. Although NSBF series are available even in the case of Schrödinger equations with distributional potentials, there is no effective control over the decay of the coefficients nor over the truncation error of the NSBF series \cite{mineshrodinger}. One of the main advantages of NSBF series for impedance equations is the availability of a uniform bound of the form \eqref{eq:estimateNSBF} for both the truncation error and the decay of the coefficients. This leads to improved numerical stability in the computation of approximate eigenvalues and eigenfunctions. It is also worth noting that, in principle, NSBF representations can be implemented for complex-valued impedance functions. In this case, the computation of complex roots can be carried out using algorithms based on Rouché’s theorem and the argument principle, similar to those employed in \cite{sppsulises}.

From a numerical perspective, the proposed NSBF representation leads to an efficient and fully analytical scheme for the computation of solutions of the impedance equation. The recursive construction of the coefficients, together with the uniform truncation error control, allows for the accurate computation of eigenvalues with a large number of significant digits, even for eigenvalues of high index. This feature is particularly relevant in applications where higher spectral modes play a particularly relevant role, such as in acoustics, geophysics, and wave propagation models. 
In addition, the NSBF representation of the Weyl function provides a computationally accessible tool for the analysis of spectral data associated with the SLEIF. In particular, it can be employed for the validation and testing of mathematical models, as well as for the development and numerical implementation of inverse methods related to impedance-type Sturm–Liouville problems \cite{Rodcross}.
\newline 

 {\bf Acknowlegdments:}
 Abigail G. M\'arquez-Hern\'andez thanks to SECIHTI and Professor Vladislav V. Kravchenko for the financial support given through the scholarship \textit{Ayudante de Investigador Nacional SNII 3}.
 
	Víctor A. Vicente-Ben\'itez thanks to SECIHTI for their support through the program {\it Estancias Posdoctorales por México Convocatoria 2023 (I)}.

    Both autors thanks to Instituto de Matemáticas de la U.N.A.M. Unidad Querétaro (México), where part of this work was developed.
	\newline
	
	{\bf Conflict of interest:} This work does not have any conflict of interest.

\FloatBarrier

\end{document}